\documentclass{amsart}
\usepackage[T1]{fontenc}
\usepackage[utf8]{inputenc}
\usepackage{latexsym}
\usepackage{amsfonts}
\usepackage{amsmath}
\usepackage{amssymb}
\usepackage{amsthm}
\usepackage[nobysame,initials]{amsrefs}
\usepackage{xcolor}
\definecolor{darkred}{RGB}{180,0,0}
\definecolor{darkblue}{RGB}{0,0,180}
\usepackage{hyperref}
\hypersetup{
  colorlinks = true,
  urlcolor   = darkblue,
  linkcolor  = darkblue,
  citecolor  = darkred,
  pdftitle = {Straightening warped cones},
  pdfauthor = {Damian Sawicki and Jianchao Wu},
  pdfkeywords = {warped cone, coarse embedding, Haagerup property, proper actions on Banach spaces, amenable action, property A, asymptotically faithful map}
}
\usepackage[capitalise]{cleveref}
\usepackage{enumitem}

\usepackage{ulem}
\normalem
\usepackage{upref}

\newtheorem{thm}{Theorem}[section]
\newtheorem{theorem}[thm]{Theorem}
\newtheorem*{thm*}{Theorem}
\newtheorem{lem}[thm]{Lemma}
\newtheorem{XxmpX}[thm]{Lemma}
\newenvironment{lem-no-proof}    % this is the environment name for the input
  {%
   \pushQED{\qed}\begin{XxmpX}}
  {\popQED\end{XxmpX}}

\newtheorem{prop}[thm]{Proposition}
\newtheorem{cor}[thm]{Corollary}
\newtheorem{ex}[thm]{Example}
\newtheorem{question}[thm]{Question}
\newtheorem{thmx}{Theorem}

\newtheorem{thmi}{Theorem}

\theoremstyle{definition}
\newtheorem{defi}[thm]{Definition}
\newtheorem{longRem}[thm]{Remark}

\def\R{{\mathbb R}}
\def\C{{\mathbb C}}
\def\N{{\mathbb N}}
\def\Z{{\mathbb Z}}

\def\T{{\mathbb T}}
\def\SS{{\mathbb S}}
\def\LL{{\mathbb L}}

\def\U{{\mathcal U}}
\def\cO{{\mathcal O}}

\def\eps{\varepsilon}
\def\st{\ |\ }

\DeclareMathOperator{\supp}{supp}
\DeclareMathOperator{\diam}{diam}
\DeclareMathOperator{\Prob}{Prob}
\DeclareMathOperator{\id}{id}

\newcommand{\acts}{\ensuremath\mathrel{\raisebox{.6pt}{$\curvearrowright$}}}
\newcommand*{\defeq}{\mathrel{\vcenter{\baselineskip0.5ex \lineskiplimit0pt
                     \hbox{\scriptsize.}\hbox{\scriptsize.}}}%
                     =}
\newcommand*{\eqdef}{=\mathrel{\vcenter{\baselineskip0.5ex \lineskiplimit0pt
                     \hbox{\scriptsize.}\hbox{\scriptsize.}}}}
\newcommand*\dif{\mathop{}\!\mathrm{d}}

\let\oldeprint\eprint
\newcommand{\neweprint}[1]{\href{https://arxiv.org/abs/#1}{\tt arXiv:#1}}

\begin{document}
\title{Straightening warped cones}

\author{Damian Sawicki}
\address{Institute of Mathematics of the Polish Academy of Sciences
    \newline \'Sniadeckich 8, 00-656 Warszawa, Poland}
\curraddr{Max-Planck-Institut f\"{u}r Mathematik
    \newline Vivatsgasse 7, 53111 Bonn, Germany}
\urladdr{\href{https://guests.mpim-bonn.mpg.de/dsawicki/}{guests.mpim-bonn.mpg.de/dsawicki/}}

\author{Jianchao Wu}
\address{Department of Mathematics, Penn State University, University
Park PA 16802}
\curraddr{Mailstop 3368,
Department of Mathematics,
Texas A\&M University,
College Station, Texas 77843,
United States}
\urladdr{\href{https://www.math.tamu.edu/~jwu/}{www.math.tamu.edu/~jwu/}}

\subjclass[2010]{Primary: 46B85; Secondary: 20F69, 37B05, 43A07}
\keywords{warped cone; coarse embedding; Haagerup property; proper actions on Banach spaces; amenable action; property A; asymptotically faithful map}

\begin{abstract}
We provide the converses to two results of J. Roe (Geom.\ Topol.\ 2005): first, the warped cone associated to a free action of an a-T-menable group admits a fibred coarse embedding into a Hilbert space, and second, a free action yielding a warped cone with property A must be amenable. We construct examples showing that in both cases the freeness assumption is necessary. The first equivalence is obtained also for other classes of Banach spaces, in particular for $L^p$-spaces.
\end{abstract}

\maketitle

\section{Introduction}

Given an action $\Gamma\acts Y$ of a finitely generated group on a compact metric space, J. Roe \cite{Roe-cones} constructs an unbounded metric space $\cO_\Gamma Y$ {\textemdash} the warped cone over the action {\textemdash} encoding dynamical properties of the action in its coarse structure. The outline of the construction is as follows {\textemdash} first, take the space and scale its metric by a large constant, then, fixing a finite set of generators for the group, we declare that whenever a point is mapped to another by the action of a generator, the distance between these two points shall be no more than $1$. 
The metric obtained after creating these ``shortcuts'' is called the warped metric and the resulting family of metric spaces (indexed by the positive scaling constants) is called the warped cone. 

Dynamical and ergodic properties of the action are reflected in large-scale geometric properties of the warped cone. In particular, the action has a spectral gap if and only if levels of the warped cone are quasi-isometric to an expander graph \cite{Vigolo}, even if one considers Banach-space expanders (including so-called super-expanders) and spectral gaps \cite{Sawicki-superexp} (see also \cite{Nowak--Sawicki} for the first results in this direction).

Recall the main results of \cite{Roe-cones}:

\begin{thmi}[Roe]
\label{Roe1}
Assume that the action $\Gamma \acts Y$ is amenable. Then $\cO_\Gamma Y$ has property A.
\end{thmi}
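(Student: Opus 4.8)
The plan is to verify the functional form of property A directly, producing for each $R,\eps>0$ a map $p\mapsto\mu_p$ from $\cO_\Gamma Y$ to probability measures on $\cO_\Gamma Y$ with uniformly bounded supports such that $\|\mu_p-\mu_q\|_1\le\eps$ whenever the warped distance $d_\Gamma(p,q)\le R$. I write a point of the cone as $p=(y,t)$ with $y\in Y$ and $t\ge 1$, so that a move inside $Y$ costs about $t$ times the $Y$-distance while each generator step costs $1$. Amenability of $\Gamma\acts Y$ furnishes, for any prescribed finite $F\subseteq\Gamma$ and $\delta>0$, a continuous map $m\colon Y\to\Prob(\Gamma)$ with $\supp m^{y}\subseteq B_\Gamma(e,N)$ for some $N$ and $\sup_{y}\|g\cdot m^{y}-m^{gy}\|_1\le\delta$ for all $g\in F$; I apply this with $F=B_\Gamma(e,R)$ and $\delta$ tiny.

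The construction I propose is
\[
\mu_p\defeq\sum_{\eta\in\Gamma} m^{y}(\eta)\,\beta_{(\eta^{-1}y,\,t)},\qquad p=(y,t),
\]
where $\beta_q$ is the normalised indicator of the warped ball $B_\Gamma(q,\kappa)$ for a radius $\kappa$ chosen much larger than $R+N$. Two design choices are essential. First, mass is placed at the orbit points $\eta^{-1}y$, which lie within warped distance $N$ of $p$, so $\supp\mu_p\subseteq B_\Gamma(p,N+\kappa)$ and the support bound holds with $S=N+\kappa$. Second, mollifying by warped balls, rather than by point masses or by $Y$-balls, is what lets the geometric directions be absorbed: a move of $p$ changing $(y,t)$ to $(y',t')$ within warped distance $\le R$ displaces each centre by $d_\Gamma\big((\eta^{-1}y,t),(\eta^{-1}y',t')\big)\le 2N+R$, so taking $\kappa\gg 2N+R$ makes $\|\beta_{(\eta^{-1}y,t)}-\beta_{(\eta^{-1}y',t')}\|_1$ small by overlap, and summing against the weights together with the continuity of $m$ controls the total. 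The group direction is handled by the opposite mechanism: for a move $p=(y,t)\mapsto(gy,t)$ the $\eta^{-1}$-convention makes centres match exactly, since $(g\eta)^{-1}(gy)=\eta^{-1}y$, so the matched warped bumps cancel and only the weights differ, giving $\|\mu_{(y,t)}-\mu_{(gy,t)}\|_1\le\|g\cdot m^{y}-m^{gy}\|_1\le\delta$ straight from amenability.

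To conclude I would invoke a standard comparison estimate for warped metrics (as in Roe's construction): if $d_\Gamma(p,q)\le R$ then $q$ is reached from $p$ by one group element $g$ with $|g|_S\le R$ followed by a geometric move of warped size $O(R)$. Combining the two estimates above then bounds $\|\mu_p-\mu_q\|_1$ by $\delta+O(R/\kappa)$ plus a modulus-of-continuity term for $m$, each made $\le\eps$ by taking $\delta$ small and $\kappa$ large. The main obstacle is precisely the reconciliation of the continuous $Y$-direction with the action, which need not be equicontinuous: a small move in $Y$ can send orbit points far apart in $Y$, so smearing in $Y$ cannot work and one is forced to smear in the warped metric itself. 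This shifts the difficulty onto a geometric regularity input — that the natural measure on $\cO_\Gamma Y$ is doubling with controlled ball boundaries, so that warped balls with nearby centres genuinely overlap — which holds in Roe's setting (e.g.\ $Y$ a compact Riemannian manifold) and which I would isolate as a lemma; the only remaining point is the behaviour over the compact base $Y\times[1,t_0]$, a bounded set on which property A is automatic and can be glued in separately.
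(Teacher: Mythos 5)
Your construction has the right skeleton, and your treatment of the group direction (exact matching of centres via the $\eta^{-1}$ convention, cost $\|g\cdot m^y - m^{gy}\|_1\le\delta$) is correct and coincides with what Roe does. But the step you defer to a ``geometric regularity'' lemma is where the entire difficulty of the theorem lives, and that lemma is false. Note first that your proposed lemma --- warped balls with nearby centres have almost-coinciding normalised indicators once the radius is large, provided $Y$ is, say, a compact Riemannian manifold --- makes no reference to amenability of the action. If it were true, then $p\mapsto\beta_{B_{d_\Gamma}(p,\kappa)}$ \emph{by itself} would witness property A for every warped cone over a manifold. This is contradicted by known examples: take $\Gamma\le\mathrm{SU}(2)$ a free subgroup acting on $Y=\SS^3$ by left translations with a spectral gap \cite{BG}; the action is free and isometric on a compact Riemannian manifold, yet $\cO_\Gamma\SS^3$ does not coarsely embed into Hilbert space \cite{Nowak--Sawicki} and hence fails property A (property A implies coarse embeddability \cite{Yu}). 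The intrinsic reason for the failure is visible from \cref{faithful-lemma} of this paper: for a free action, warped $\kappa$-balls at sufficiently high levels are isometric to $d^1$-balls in $\Gamma\times\cO Y$, i.e.\ to word-metric balls of $\Gamma$ thickened by small geometric pieces, with the natural measure corresponding to counting measure times volume. Whenever $\Gamma$ is non-amenable, the absence of F\o lner sets gives a generator $s$ and $c>0$ with $\|\beta_{B(e,\kappa)}-\beta_{B(e,\kappa)s}\|_1\ge c$ for \emph{all} $\kappa$, and this persists for the thickened balls. So precisely for the actions for which \cref{Roe1} has content --- free, Lipschitz, amenable actions of non-amenable groups, as in \cref{non-amenable-action} --- your measures $\mu_p$ fail the invariance estimate at warped distance $1$ no matter how $\kappa$ is chosen, while the conclusion of the theorem is nevertheless true. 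Warping destroys doubling as soon as $\Gamma$ has exponential growth; regularity of the \emph{warped} metric measure space is not an available input, it is (more than) the statement to be proved.

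The repair is exactly the route you rejected: mollify in the \emph{unwarped} metric of $\cO Y$, which is Roe's proof \cite{Roe-cones} and is what the precise statement \cref{Roe-amenable-actions} is calibrated for. Your objection --- a small move in $Y$ sends orbit points far apart --- is answered by the order of quantifiers together with the Lipschitz hypothesis. Given $R$ and $\eps$, amenability is invoked \emph{first}, fixing $N$ and the weights $m^y$ supported in $B(e,N)$; the finitely many group elements of $B(e,N+R)$ then have a common Lipschitz constant $L$, so a warped move of size at most $R$ displaces every orbit centre $\eta^{-1}y$ by an \emph{unwarped} distance at most a constant $C=C(R,N,L)$. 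One then invokes property A of the unwarped cone $(\cO Y,d)$ at scale $C$ to produce the mollifiers; this is legitimate since $\cO Y\subseteq\R^{n+1}$ when $Y$ sits in a sphere (and in general is the hypothesis recorded in the remark after \cref{Roe-amenable-actions} that $\cO Y$ has property A), and in the unwarped cone over a manifold the kind of ball-smearing you envisage genuinely works. This is where the hypothesis on $Y$ enters and it cannot be dispensed with; your proposal relocates that hypothesis to the warped metric, where it is false. With that replacement, the rest of your outline --- the support bound, the decomposition of a warped $R$-move into a bounded group element followed by a bounded geometric move (of size $O(L^{R}R)$ rather than $O(R)$, which is harmless), and the bounded core --- goes through as in Roe's argument.
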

\begin{thmi}[Roe]
\label{Roe2}
Assume that $\Gamma < G$ is a subgroup in a compact Lie group $G$. If~$\cO_\Gamma G$ has property A, then $\Gamma$ is amenable.
\end{thmi}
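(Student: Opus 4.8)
The plan is to read off from property A of $\cO_\Gamma G$ a Reiter-type almost-invariance condition for $\Gamma$, using the homogeneous structure of the compact group $G$ to average away the dependence on the base point. Recall that property A of a metric space provides, for every $R,\eps>0$, a constant $S$ and a family $(\xi_x)_x$ of finitely supported probability measures with $\supp\xi_x\subseteq B(x,S)$ and $\|\xi_x-\xi_y\|_1\le\eps$ whenever $d(x,y)\le R$. Applying this at a fixed large scale $t$ of the warped cone, I would first exploit the defining shortcuts of the warped metric: since $d_t(g,sg)\le 1$ for every generator $s$ of $\Gamma$, taking $R\ge 1$ yields $\|\xi_{sg}-\xi_g\|_1\le\eps$ for all $g\in G$ and all generators $s$. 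This is the seed of invariance; the task is to convert it into invariance of a single vector on $\Gamma$ itself.

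Second, I would analyze warped balls at large scale. As $t\to\infty$ the genuinely metric contribution to $d_t$ is rescaled by $t$, so $B_{d_t}(g,S)$ collapses onto a $d_G$-neighborhood of the finite orbit piece $\{\gamma g \st |\gamma|_\Gamma\le S\}$ of radius $O(S/t)$. Thus for $t$ large each $\xi_g$ is concentrated in small tubes around the orbit points $\gamma g$ with $\gamma$ a short word. Because the translation action is free, the orbit map $\gamma\mapsto\gamma g$ is injective, and I would use this to push $\xi_g$ forward to a probability vector $\bar\xi_g\in\ell^1(\Gamma)$, letting $\bar\xi_g(\gamma)$ record the $\xi_g$-mass sitting in the tube around $\gamma g$. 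Note that this labeling is taken relative to the base point $g$: the same physical point $\gamma(sg)=(\gamma s)g$ carries the label $\gamma$ from base $sg$ but the label $\gamma s$ from base $g$, so a genuine $\Gamma$-invariance cannot yet be expected at a fixed $g$.

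Third, I would integrate over the base: set $F=\int_G\bar\xi_g\,\dif\mu(g)\in\ell^1(\Gamma)$, where $\mu$ is the Haar, hence $\Gamma$-invariant, probability measure on $G$. The base-point shift $g\mapsto sg$ that afflicts the labeling in step two is exactly the change of variables under which $\mu$ is invariant, so after averaging the estimate $\|\xi_{sg}-\xi_g\|_1\le\eps$ realigns into $\|\lambda(s)F-F\|_1\lesssim\eps$ for every generator $s$, with $F$ a genuine probability vector on $\Gamma$. As $\eps$ was arbitrary, this is Reiter's condition $(P_1)$, and amenability of $\Gamma$ follows.

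The main obstacle is the second step, and this is precisely where freeness alone does not suffice: when $\Gamma$ is dense in $G$ (for instance a free group inside $SO(3)$), distinct short words $\gamma\ne\gamma'$ can send $g$ to $d_G$-close points, so the tubes around $\gamma g$ overlap and the push-forward $\bar\xi_g$ is neither canonical nor stable. Making the construction rigorous requires choosing $t$ large compared with the minimal $d_G$-separation of the points $\{\gamma g\st|\gamma|_\Gamma\le S\}$, and showing that the $\xi_g$-mass which is ``ambiguous'' — lying where tubes overlap, or smeared out by the metric part of the ball of radius $O(S/t)$ — is negligible after averaging over $g$. Equivalently, one may reinterpret property A of $\cO_\Gamma G$ as topological amenability of the action $\Gamma\acts G$, whereupon invariance of Haar measure forces $\Gamma$ to be amenable; in either formulation the delicate point is controlling the interaction between the metric spreading of size $O(S/t)$ and the recurrences of the orbit to itself.
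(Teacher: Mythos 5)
Your proof is correct and is essentially the paper's own argument: the paper establishes the general result (\cref{A-to-amenable}) that property A of the warped cone over a \emph{free} action forces amenability of the action, via exactly your construction --- property A measures at a single large level are pushed onto $\ell^1(\Gamma)$ using that the warped ball $B(rg,S)$ decomposes into disjoint tubes around the orbit points $\gamma g$ with $|\gamma|\le S$ (this disjointness, uniform in $g$, is the content of \cref{faithful-lemma}) --- and then deduces amenability of $\Gamma$ itself from invariance of the Haar measure, which is precisely your averaging step. Your only hedge, the ``ambiguous mass'' where tubes might overlap, in fact vanishes entirely for this action: the separation of $\{\gamma\in\Gamma \st |\gamma|\le S\}$ in $G$ is a positive constant independent of the base point $g$ (by compactness, or by choosing a bi-invariant metric), so once $t$ exceeds $2S$ divided by this separation the tubes are pairwise disjoint, and since the translation action is isometric the warped ball is exactly contained in their union, leaving no mass unaccounted for.
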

\begin{thmi}[Roe]
\label{Roe3}
Assume that $\Gamma$ is a subgroup in a compact Lie group $G$. If~$\cO_\Gamma G$ embeds coarsely into a Hilbert space, then $\Gamma$ has the Haagerup property, i.e.\ it is a-T-menable.
\end{thmi}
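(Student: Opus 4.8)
The plan is to convert a coarse embedding of $\cO_\Gamma G$ into a proper conditionally negative definite function on $\Gamma$, which is one of the standard characterisations of a-T-menability. The two features I would exploit are that $G$ is compact and that it acts on the warped cone by isometries: fixing a bi-invariant metric on $G$, the right translations $R_v\colon g\mapsto gv$ commute with the left generator-shortcuts used to warp the metric, so each $R_v$ is an isometry of every level $\{t\}\times G$, and these isometries act transitively.

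First I would average the embedding over this compact isometry group. Given a coarse embedding $f\colon\cO_\Gamma G\to H$ with control functions $\rho_-\le\rho_+$, define $F(x)\in L^2(G,H)$ by $F(x)=\bigl(u\mapsto f(xu)\bigr)$. Since each $u$ acts isometrically, $\|F(x)-F(y)\|^2=\int_G\|f(xu)-f(yu)\|^2\,du$ still obeys $\rho_-(d(x,y))\le\|F(x)-F(y)\|\le\rho_+(d(x,y))$, and now $F(xv)=\lambda_v F(x)$ for the orthogonal regular representation $\lambda$ of $G$. Consequently the squared-distance kernel on a level $\{t\}\times G\cong G$ is right-$G$-invariant, hence of the form $\psi_t(gh^{-1})$ for a conditionally negative definite function $\psi_t$ on $G$; its restriction to $\Gamma$ is conditionally negative definite on $\Gamma$ and satisfies
\[
\rho_-\!\bigl(d_t(e,\gamma)\bigr)^2\le\psi_t(\gamma)\le\rho_+\!\bigl(d_t(e,\gamma)\bigr)^2,
\]
where $d_t$ is the warped metric at level $t$.

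Second I would compare the warped and word metrics along a single orbit, showing that for each fixed $\gamma$ the quantities $d_t(e,\gamma)$ increase with $t$ and satisfy $d_t(e,\gamma)\to|\gamma|_S$. The bound $d_t(e,\gamma)\le|\gamma|_S$ is immediate from spelling $\gamma$ out of generators. For the reverse bound I would analyse an arbitrary path interleaving unit-cost generator moves and cone moves of cost $t\cdot(\text{$G$-length})$: a path of total cone-length $L$ either has $L$ bounded below (so its cost is $\ge tL\to\infty$) or, using only its generators, lands within distance $L$ of $\gamma$ via a word $w$; as words of bounded length form a finite, hence discrete, subset of $G$, any such $w\ne\gamma$ keeps a definite distance from $\gamma$, forcing $L$ and the cost up as $t\to\infty$.

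Finally I would pass to the limit in the level parameter. For each fixed $\gamma$ the numbers $\psi_t(\gamma)$ are bounded above by $\rho_+(|\gamma|_S)^2$, so the pointwise ultralimit $\psi_\omega(\gamma)=\lim_{t\to\omega}\psi_t(\gamma)$ exists; being a pointwise limit of conditionally negative definite functions it is again conditionally negative definite, and the lower bound together with $d_t(e,\gamma)\to|\gamma|_S$ yields $\psi_\omega(\gamma)\ge\rho_-(|\gamma|_S)^2$. Since $\rho_-\to\infty$, the function $\psi_\omega$ is proper, proving that $\Gamma$ is a-T-menable. I expect the main obstacle to be the metric comparison in the second step: because $\Gamma$ is typically dense in $G$, at any single level short words can be made warped-close to $e$, so an individual $\psi_t$ is usually not proper, and one must ensure the level-by-level estimate recovers the full word length in the limit. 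The ultralimit in the last step is exactly what allows properness to be checked one element at a time, thereby sidestepping this density issue.
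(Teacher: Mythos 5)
Your argument is correct in substance, and its skeleton is the same as the paper's own treatment of this statement (\cref{coarse-to-a-T-menable} and \cref{coarse-to-PL^p}, which generalise it): both turn the coarse embedding into squared-distance negative-type kernels, both use freeness of the translation action to show that the warped distance at level $t$ from $e$ to $\gamma$ converges to the word length $|\gamma|_S$, and both combine infinitely many levels to force properness. The differences are in packaging, and it is worth recording what each buys. You exploit the homogeneity of $G$ up front, averaging over right translations (isometries, thanks to a bi-invariant metric) so that the level kernels are exactly invariant and restrict to conditionally negative definite functions on $\Gamma$; the paper instead keeps the space variable, obtaining the kernel $h(\gamma,y)=\sum_n 2^{-n}k(r_ny,\, r_n\gamma y)$ on $\Gamma\times Y$ --- i.e.\ a-T-menability of the \emph{action} --- and only afterwards invokes the invariant Haar measure to descend to the group (or, in \cref{coarse-to-PL^p}, builds a cocycle in a Bochner space over a weighted union of levels). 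Your Haar-averaging is the same use of the invariant measure performed in a different order; it is clean, but strictly less general, since it cannot yield \cref{theoremc} for free actions on arbitrary compact spaces, which the paper's decomposition does. Second, you combine levels by an ultralimit where the paper uses the weighted series $\sum_n 2^{-n}(\cdot)$; the paper itself observes in \cref{sec:problems} that the ultralimit variant works and stays proper and negative definite, its only defect being loss of continuity in the space variable --- irrelevant for the group conclusion you need, so this choice is legitimate. Two details to tighten: $f$ should first be made continuous (a partition-of-unity argument, as the paper notes) so that the integral defining your $F$ makes sense; and your metric comparison only considers paths inside the level $\{t\}\times G$, whereas warped-cone paths may also move radially between levels --- a radial excursion of depth $C$ costs $2C\cdot\diam(G)$, so such paths are likewise prohibitively expensive for large $t$ (alternatively, cite \cite{Sawicki-completions}*{Remark~3.1} for the needed fact, as the paper does).
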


\Cref{Roe3} remains true for fibred coarse embeddings and also more general classes of actions \cite{Sawicki-completions}.

The main goal of this paper is to provide the converses to \cref{Roe1} and~\labelcref{Roe3} (we also generalise \cref{Roe2} and~\labelcref{Roe3}). The following result (\cref{A-to-amenable} in text) gives the converse of \cref{Roe1}.

\begin{thmx}\label{theorema} Assume that the action $\Gamma \acts Y$ is free. If $\cO_\Gamma Y$ has property A, then the action must be amenable.
\end{thmx}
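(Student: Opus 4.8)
The plan is to use property A of $\cO_\Gamma Y$ to manufacture, for every $\eps>0$, a family of probability measures on $\Gamma$ parametrised by $Y$ that is $\eps$-equivariant for the generators; letting $\eps\to 0$ then yields amenability of the action. Fix a finite generating set $S_0\subseteq\Gamma$ and recall that property A of the cone supplies, for $R=1$ and the given $\eps$, a radius $S$ together with a family of probability measures $(\xi_z)_{z\in\cO_\Gamma Y}$ with $\supp\xi_z$ contained in the warped ball $B(z,S)$ and $\|\xi_z-\xi_{z'}\|_1\le\eps$ whenever the warped distance between $z$ and $z'$ is at most $1$. I work over a single large level $t$, writing $\xi_y$ for the datum at the point over $y\in Y$. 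Since $\Gamma$ acts by a fixed finite family of uniformly bi-Lipschitz homeomorphisms, a warped path of length $\le S$ decomposes into at most $S$ generator-jumps and intermediate slides of total $t\,d_Y$-length $\le S$; hence every point of $\supp\xi_y$ has its $Y$-coordinate in $\bigcup_{|\gamma|\le S}\gamma\,B_{d_Y}(y,C/t)$ for a constant $C=C(S)$, a union of metric balls whose radius tends to $0$ as $t\to\infty$.

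The key point {\textemdash} and the only place freeness enters {\textemdash} is that these clusters can be labelled unambiguously by group elements. By freeness and compactness of $Y$, for the finite set $\{\gamma : |\gamma|\le S+1\}$ there is a separation constant $\delta>0$ with $d_Y(\gamma_1 y,\gamma_2 y)\ge\delta$ for all $y\in Y$ and all distinct such $\gamma_1,\gamma_2$. Choosing $t$ so large that $C/t$ is much smaller than $\delta$ divided by the relevant Lipschitz constants, the sets $\gamma\,B_{d_Y}(y,C/t)$ with $|\gamma|\le S+1$ become pairwise disjoint, so that reading off the $Y$-coordinate defines a ``decoding'' map $\mathrm{dec}_y$ sending a point near $\gamma y$ to $\gamma\in B_\Gamma(e,S+1)$. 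Set $\mu_y\defeq(\mathrm{dec}_y)_*\xi_y\in\Prob(B_\Gamma(e,S))$.

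Equivariance then follows formally. For a generator $s\in S_0$ the measure $\xi_{sy}$ is supported near the points $\gamma(sy)=(\gamma s)y$ with $|\gamma|\le S$, so $\mathrm{dec}_y$ is defined on its support and there satisfies $\mathrm{dec}_y(z)=\mathrm{dec}_{sy}(z)\,s$; consequently $(\mathrm{dec}_y)_*\xi_{sy}$ is the right-translate of $\mu_{sy}$ by $s$. Since pushforward is an $\ell^1$-contraction and the warped distance from $y$ to $sy$ is at most $1$,
\[
\bigl\|\mu_y-(\mathrm{dec}_y)_*\xi_{sy}\bigr\|_1\le\|\xi_y-\xi_{sy}\|_1\le\eps ,
\]
which, after replacing each $\mu_y$ by its image $\nu_y$ under $\gamma\mapsto\gamma^{-1}$, becomes the standard estimate $\|\nu_{sy}-s\cdot\nu_y\|_1\le\eps$. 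Telescoping along words propagates this from generators to every fixed $g\in\Gamma$, and letting $\eps\to 0$ produces a sequence of maps $Y\to\Prob(\Gamma)$ witnessing amenability of $\Gamma\acts Y$.

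I expect the main technical obstacle to be making the geometric bookkeeping of the second paragraph fully rigorous and uniform: controlling how intermediate slides are amplified by subsequent generator-jumps so that all of $\supp\xi_y$ really falls into the shrinking metric clusters, and arranging a single threshold on $t$ that works simultaneously for the base points $y$ and $sy$. A secondary issue is matching the regularity demanded in the definition of an amenable action {\textemdash} if continuous rather than merely measurable maps $Y\to\Prob(\Gamma)$ are required, one must smooth the $\mu_y$, e.g.\ by averaging against a partition of unity in the $y$-variable, without spoiling the estimates above.
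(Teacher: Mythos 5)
Your proposal follows essentially the same route as the paper's proof of \cref{A-to-amenable}: restrict the property-A measures to a single high level of the cone, use freeness plus compactness to obtain a uniform separation constant $\delta>0$ for the finite set $\{\gamma \st |\gamma|\leq S+1\}$, decode each measure into a probability measure on $\Gamma$ by reading off which cluster around $\gamma y$ carries its mass, and convert the property-A variation bound into approximate equivariance by a change of variables. The paper isolates your second paragraph as a separate statement (\cref{faithful-lemma}: asymptotic faithfulness of the quotient map $\Gamma\times \cO Y \to \cO_\Gamma Y$), and its proof of \cref{A-to-amenable} is then exactly your third paragraph, including the device of nested radii so that the decodings based at $y$ and at $sy$ agree on the support of $\xi_{sy}$.

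One genuine correction is needed. You assume the generators act by uniformly bi-Lipschitz homeomorphisms; the theorem assumes only a continuous action, and without Lipschitz control your constant $C(S)$ does not exist, so the clustering estimate fails as written. The fix {\textemdash} and this is precisely how \cref{faithful-lemma} is proved {\textemdash} is to note that only the finitely many elements of $B(e,S)$ ever act on a path of warped length at most $S$, that each of them is uniformly continuous because $Y$ is compact, and that each individual slide has $d_Y$-length $O(1/t)$; hence the at most $S$ amplified slide terms are each bounded by a common modulus of continuity evaluated at $O(1/t)$, and their sum still tends to $0$ as $t\to\infty$. With clusters of this shrinking (though no longer linear in $1/t$) radius, the rest of your argument goes through unchanged. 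Your secondary worry about continuity of $y\mapsto \mu_y$ is also handled in the paper without post-hoc smoothing: because the mass of a cluster is unchanged when its radius is perturbed between $\delta/2$ and $\eps$ (equation \eqref{radius enlarging}), the decoded measures automatically depend continuously on $y$.
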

Since the kind of actions in \cref{Roe2} always carry at least one invariant probability measure (that is, the Haar measure), a condition which makes amenability of the action equivalent to amenability of the acting group, thus 
\cref{theorema} also generalises \cref{Roe2}. 

\Cref{Roe3} has the following converse (\cref{Haagerup-to-fibred} in text).

\begin{thmx}\label{theoremb}
Assume that the action $\Gamma \acts Y$ is free and {linearisable} (e.g.\ it is the action of a subgroup on an ambient compact Lie group) and that $\Gamma$ is a-T-menable. Then $\cO_\Gamma Y$ fibred coarsely embeds into a Hilbert space.
\end{thmx}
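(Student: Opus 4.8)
The plan is to combine the two pieces of structure hypothesised on the action: a-T-menability of $\Gamma$, which produces a proper affine isometric action handling the ``group direction'' of the warped cone, and linearisability of $\Gamma\acts Y$, which produces a $\Gamma$-equivariant isometric embedding handling the transverse ``$Y$-direction''. Concretely I would fix a proper conditionally negative definite function $\psi\colon\Gamma\to[0,\infty)$ realised by a $1$-cocycle $b\colon\Gamma\to H_0$ for an orthogonal representation $\pi\colon\Gamma\to O(H_0)$, so that $\|b(g)-b(h)\|=\sqrt{\psi(g^{-1}h)}$; here $\sqrt\psi$ is proper (yielding the lower control) and satisfies $\sqrt{\psi(g)}\le C|g|$ (yielding the upper control), $|\cdot|$ being the word length. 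Linearisability I would use in the form of a $\Gamma$-equivariant bi-Lipschitz embedding $\iota\colon Y\hookrightarrow V$ into a Euclidean (or Hilbert) space carrying an orthogonal representation $\sigma\colon\Gamma\to O(V)$, so that $\iota(gy)=\sigma(g)\iota(y)$; in the motivating example $G\subseteq U(n)$ this is just $G\hookrightarrow M_n(\C)$ with $\sigma(g)$ left multiplication.

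The geometric heart is the local product structure of the warped cone. For a fixed scale $R$ and all sufficiently large levels $t$ (this is what produces the bounded exceptional set $K$ in the definition of a fibred coarse embedding), I would show that the warped ball $B\big((y_0,t),R\big)$ is parametrised by $B_\Gamma(e,CR)\times B_Y(y_0,CR/t)$ via $(g,y')\mapsto (gy',t')$, this parametrisation being bi-Lipschitz onto its image with the warped metric comparable to the sum of the word metric on the first factor and the rescaled metric $t\,d_Y$ on the second. Injectivity here is exactly where \emph{freeness} enters: since the finite generating set acts freely on the compact space $Y$, non-trivial elements of any fixed word-length ball move points by at least some $\eps_0>0$, so once $R/t<\eps_0/C$ the map $(g,y')\mapsto gy'$ cannot identify distinct pairs.

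With these ingredients I would take the constant field $H_0\oplus V$ over $\cO_\Gamma Y$ and build, over each ball $B\big((y_0,t),R\big)$, a trivialisation realising the local section $(g,y')\mapsto b(g^{-1})\oplus t\big(\iota(y')-\iota(y_0)\big)$. The lower bound $\rho_-$ then splits: along an orbit the $H_0$-part contributes $\sqrt{\psi(gh^{-1})}$, proper in the word distance, while in the transverse directions the $V$-part contributes $t\|\iota(y')-\iota(y'')\|\asymp t\,d_Y(y',y'')$, proper in the rescaled $Y$-distance; since the warped distance is comparable to the maximum of these two quantities on the product, their Hilbertian combination is bounded below by a proper function of it. The upper bound $\rho_+$ follows from $\sqrt{\psi(g)}\le C|g|$ and the Lipschitz constant of $\iota$. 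The essential use of the \emph{fibred} (rather than honest) nature of the embedding is that $b(g^{-1})$ depends on a choice of orbit basepoint which cannot be made globally consistent; changing the local basepoint by $k\in\Gamma$ transforms the section by the affine isometry $v\mapsto \pi(k)v+b(k)$ on $H_0$ and by $\sigma(k)$ on $V$, by the cocycle identity and equivariance of $\iota$ respectively.

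The main obstacle, and the step I would spend the most care on, is verifying compatibility of the trivialisations on overlapping balls (condition~(2) of the definition). On an overlap one relates the two local basepoints by a fixed group element $\kappa$, and the computation above shows that the two trivialisations differ by a \emph{single} affine isometry with linear part $\pi(\kappa)\oplus\sigma(\kappa)$, independent of the point in the overlap; this is precisely the required compatibility, and it is here that freeness (making the orbit labels, and hence $\kappa$, well defined) and linearisability (making the transverse transition $\sigma(\kappa)$ orthogonal) are simultaneously indispensable. I would phrase the trivialisations as affine isometries throughout, the natural framework since a-T-menability is an affine phenomenon; the constant translational ambiguity $b(\kappa)$ is then harmless, cancelling within any single ball in the differences appearing in condition~(1) and being absorbed into the affine transition map across balls. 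The remaining bookkeeping — uniformity of all constants in $y_0$ and $t$, and the innocuous contribution of the radial cone direction — is routine given the compactness of $Y$ and the local product description.
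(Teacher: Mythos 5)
Your proposal is correct and is essentially the paper's own proof: the paper realises $\cO_\Gamma Y$ as the quotient of $\Gamma\times\cO Y$ with a suitable metric, shows that freeness makes this quotient map asymptotically faithful (this is precisely your ``local product structure'' of warped balls at sufficiently large levels), and equivariantly coarsely embeds $\Gamma\times\cO Y$ into $\R\oplus H_1\oplus H_2$ using exactly your two ingredients --- the linearising embedding $\iota$ for the $Y$-direction and an a-T-menability cocycle $b$ for the group direction. The only difference is packaging: where you construct the local sections $b(g^{-1})\oplus t\bigl(\iota(y')-\iota(y_0)\bigr)$ and verify the affine compatibility $\pi(\kappa)\oplus\sigma(\kappa)$ of trivialisations by hand, the paper delegates that gluing step to the cited theorem of Chen--Wang--Yu, which converts an equivariant coarse embedding of an asymptotically faithful $\Gamma$-cover into a fibred coarse embedding of the quotient.
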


In fact, \cref{theoremb} holds for more general classes of Banach spaces (where a-T-menability is replaced by the property of admitting a proper action on an appropriate Banach space), and we also provide a new proof of \cref{Roe3} valid in the Banach-space setting (\cref{fibred-to-PL^p}). Please also note that the word ``fibred'' cannot be removed in \cref{theoremb}, as there are many subgroup actions of a-T-menable groups yielding warped cones that do not admit a coarse embedding into a Hilbert space \cite{Nowak--Sawicki}.

We also provide the following generalised version of \cref{Roe3} (\cref{coarse-to-a-T-menable} in text).

\begin{thmx}\label{theoremc}
Assume the action $\Gamma \acts Y$  is free and $\cO_\Gamma Y$ admits a coarse embedding into a Hilbert space. Then the action is a-T-menable.
\end{thmx}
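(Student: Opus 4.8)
The plan is to transfer a conditionally negative definite kernel from the Hilbert space to the transformation groupoid $\Gamma\ltimes Y$ and to check that it is proper; \emph{a-T-menability of the action} amounts precisely to producing such a kernel. Fix a coarse embedding $f\colon\cO_\Gamma Y\to\mathcal H$ with nondecreasing control functions $\rho_-\le\rho_+$, write $y_t$ for the point $y\in Y$ placed at radius $t$ of the cone, and let $|\gamma|$ denote the word length of $\gamma$. Fix a free ultrafilter $\omega$ on $[1,\infty)$ refining the filter of neighbourhoods of infinity, and set
\[
  \psi(\gamma,y)\;:=\;\lim_{t\to\omega}\bigl\|f\bigl((\gamma y)_t\bigr)-f(y_t)\bigr\|^2 .
\]
This limit exists because applying the generators of a fixed word for $\gamma$ costs at most $|\gamma|$ at every radius, so $d\bigl((\gamma y)_t,y_t\bigr)\le|\gamma|$ and the quantity under the limit is bounded by $\rho_+(|\gamma|)^2$ uniformly in $t$ and $y$.

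First I would verify that $\psi$ is a conditionally negative definite function on $\Gamma\ltimes Y$. For fixed $t$ the arrows out of $y$ are the pairs $(\gamma,y)\colon y\to\gamma y$, and for $\sum_i c_i=0$ one computes
\[
  \sum_{i,j}c_ic_j\,\psi_t\!\bigl(\gamma_i\gamma_j^{-1},\gamma_j y\bigr)
  =\sum_{i,j}c_ic_j\bigl\|f\bigl((\gamma_i y)_t\bigr)-f\bigl((\gamma_j y)_t\bigr)\bigr\|^2\le 0,
\]
since $(a,b)\mapsto\|a-b\|^2$ is conditionally negative definite on $\mathcal H$; here $\psi_t$ denotes the pre-limit expression. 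This inequality, together with $\psi_t(\gamma,y)\ge0$, $\psi_t(e,y)=0$ and the symmetry $\psi_t(\gamma^{-1},\gamma y)=\psi_t(\gamma,y)$, passes to the ultralimit, so $\psi$ is a genuine conditionally negative definite function on the groupoid. The upper bound $\psi(\gamma,y)\le\rho_+(|\gamma|)^2$ records the required ``local'' boundedness (equivalently, via Schoenberg, that the normalised positive-type functions $e^{-\psi/n}$ converge to $1$ uniformly on each slice $\{\gamma\}\times Y$).

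\textbf{The main obstacle is properness}, and this is the only place freeness enters. Writing $d_\infty(y,\gamma y):=\lim_{t\to\omega}d\bigl(y_t,(\gamma y)_t\bigr)$, we have $\psi(\gamma,y)\ge\rho_-\!\bigl(d_\infty(y,\gamma y)\bigr)^2$, so it suffices to prove that, for a free action on a compact space, $\inf_{y\in Y}d_\infty(y,\gamma y)\to\infty$ as $|\gamma|\to\infty$. I would argue by contradiction: suppose there are $\gamma_n$ with $|\gamma_n|\to\infty$ and $y_n\in Y$ with $d_\infty(y_n,\gamma_n y_n)\le R$. Since $\omega$ refines the neighbourhoods of infinity, for each fixed $n$ the set of radii $t$ with $d\bigl((y_n)_t,(\gamma_n y_n)_t\bigr)\le R+1$ is cofinal. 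Any such near-geodesic decomposes into at most $R+1$ generator moves, whose product is a word $w$ of length $\le R+1$ (ranging over a finite set), interspersed with movements inside $Y$ of total length $\le(R+1)/t$. Letting $t\to\infty$ along the cofinal set, with $n$ fixed, and passing to a subsequence on which $w$ is constant, the space displacement tends to $0$, whence, by uniform continuity of the finitely many generators on the compact $Y$, $d_Y\bigl(w\,y_n,\gamma_n y_n\bigr)\le(R+1)/t\to0$; as $y_n,\gamma_n,w$ are fixed this forces $w\,y_n=\gamma_n y_n$ exactly. Freeness then gives $w=\gamma_n$, contradicting $|w|\le R+1<|\gamma_n|$ for large $n$.

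Combining the three steps, $\psi$ is a proper, conditionally negative definite function on $\Gamma\ltimes Y$ that is bounded on each slice, which is exactly a certificate of a-T-menability of the action; equivalently the functions $e^{-\psi/n}$ furnish the required approximation of the unit by normalised positive-type functions vanishing at infinity. Two points merit care in a full write-up: that ``movement inside $Y$ of total length $\le(R+1)/t$'' genuinely controls $d_Y(w\,y_n,\gamma_n y_n)$ after composing with $\le R+1$ generators (handled by iterating their moduli of continuity), and that the chosen notion of a-T-menable action matches the certificate produced — both routine once the limiting warped distance $d_\infty$ is in hand. The examples showing that freeness cannot be dropped are consistent with this analysis, since without freeness $w\,y=\gamma y$ no longer forces $w=\gamma$, and $d_\infty(y,\gamma y)$ can remain bounded along an infinite family of $\gamma$.
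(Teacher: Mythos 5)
Your construction is at heart the same as the paper's proof of \cref{coarse-to-a-T-menable}: pull back the negative-type kernel $k(x,x')=\|f(x)-f(x')\|^2$ along the orbits at large radii, and use freeness to show that the warped displacement $d_\Gamma\bigl(y_t,(\gamma y)_t\bigr)$ becomes large (in fact equals $|\gamma|$) uniformly in $y$ as $t\to\infty$, so that $\rho_-$ forces properness. Your contradiction argument for that displacement estimate is sound; it essentially re-derives, via the path decomposition of the warped metric and uniform continuity of the generators, the input that the paper imports from \cite{Sawicki-completions}*{Remark~3.1} combined with freeness and compactness. The symmetry, normalisation, and negative-definiteness checks also go through.

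The genuine gap is \emph{continuity}, which you dismiss as routine. The paper's definition of an a-T-menable action requires the kernel $h\colon\Gamma\times Y\to[0,\infty)$ to be a \emph{continuous} function of $y$, and your $\psi$ is an ultralimit over $t\to\omega$ of the kernels $\psi_t(\gamma,\cdot)$. Even after first making $f$ continuous by a partition-of-unity argument (a step you also omit, and which is needed even to make each individual $\psi_t$ continuous), the family $\{\psi_t(\gamma,\cdot)\}_t$ is uniformly bounded but not equicontinuous: two points $y,y'$ at fixed small distance in $Y$ are pushed a distance comparable to $t\,d(y,y')$ apart at radius $t$, so the modulus of continuity of $\psi_t(\gamma,\cdot)$ degrades as $t\to\infty$, and an ultralimit of such a family has no reason to be continuous. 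This is not a pedantic point: the paper itself singles it out in \cref{sec:problems}, noting that the ultralimit of the kernels $k(r_ny,r_n\gamma y)$ is still proper and negative definite but that there is no apparent reason for it to be continuous; this obstruction is exactly why the asymptotic-embedding variant is left there as an open question. The repair is the paper's device: instead of an ultralimit, take the weighted series $h(\gamma,y)=\sum_n 2^{-n}\,k(r_ny,r_n\gamma y)$, with $(a_n)$ chosen so that $\rho_-(a_n)\geq 2^n\cdot n$ and $r_n$ chosen (by precisely your displacement argument, made uniform in $y$) so large that $d_\Gamma(r_ny,r_n\gamma y)=|\gamma|$ for all $\gamma$ with $|\gamma|\leq a_{n+1}$. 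Each summand is continuous for a fixed finite radius $r_n$, the series converges uniformly because of the bound $2^{-n}\rho_+(|\gamma|)^2$, hence $h$ is continuous, and properness already follows from the single term $n=m$ when $a_m\leq|\gamma|\leq a_{m+1}$. With that substitution your argument becomes the paper's proof.
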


In Theorems \labelcref{theorema}, \labelcref{theoremb}, and~\labelcref{theoremc} we assume that the action is free, and in \cref{sec:freeness} we construct examples showing that the assumption is necessary in all these results. \Cref{sec:problems} discusses some open problems.

\subsection*{Some context} For a residually finite group, one can consider a descending chain of finite index normal subgroups. The respective inverse system of finite quotients can be thought of as a finitary approximation of the initial infinite subgroup. This approach has led to a lot of interesting results and research directions, like the celebrated L\"{u}ck approximation theorem \cite{Lueck}, the study of profinite rigidity, rank gradient, residual finiteness growth, etc.

In the geometric context, such a sequence of finite quotients constitutes the so-called box space. Box spaces provided the first explicit examples of expander graphs \cite{Margulis}, the first examples of bounded geometry metric spaces without property A admitting a coarse embedding into a Hilbert space \cite{AGS}, and the first examples of spaces coarsely non-embeddable into a Hilbert space yet not containing expanders \cite{AT}, also among large-girth graphs \cite{DK}. If one constructs an infinite group containing such box spaces \cite{Osa}, it has exotic properties, like being a counterexample to strong versions of the Baum--Connes conjecture \cites{Gromov, AD, HLS} or being a-T-menable but not exact \cites{AO, Osa}.

Warped cones can be seen as providing finite (or compact, to be precise) approximations of not necessarily residually finite groups. Another well-studied notion of approximations for such groups are sofic approximations, studied recently in the geometric context in \cite{AFS}. It can be thought that box spaces and sofic approximations represent the combinatorial (discrete) part and warped cones the dynamical or ergodic part of the same theory.

The first-named author showed that for a box space one can construct an action (namely, the action on a profinite completion) such that the resulting warped cone contains the box space quasi-isometrically and retains its properties such as coarse embeddability into a Hilbert space \cite{Sawicki-completions}. Hence, warped cones can be viewed as a generalisation of box spaces.

Roe pointed out that there is a parallel between how a box space and a warped cone is related to the group it comes from and, indeed, via the construction of \cite{Sawicki-completions} one can use results on warped cones to recover the known results on box spaces. The case of amenability follows from versions of Theorems \labelcref{Roe1} and~\labelcref{Roe2} obtained in \cite{Sawicki-completions}, but the case of a-T-menability is more involved and requires our \cref{theoremb} (together with a version of \cref{Roe3}).

It is worth noting that while for box spaces property A is equivalent to amenability of the group, for warped cones it is equivalent to amenability of the action, which is a much weaker notion in general.

\subsection*{Acknowledgements}
The authors are very grateful to Piotr Nowak for connecting them to each other when they were thinking about matters of the present paper. We thank Alexander Engel for many helpful remarks on the manuscript. We are grateful to an anonymous referee whose comments made us restructure some proofs to a great benefit of the reader. The second-named author is grateful to Qin Wang for introducing him to some of the problems discussed in this paper. 

The first-named author is grateful to Max Planck Institute for Mathematics in Bonn, where he was a post-doc when the final version of the manuscript was prepared, for its hospitality and financial support.

The first-named author was partially supported by Narodowe Centrum Nauki grant Preludium number 2015/19/N/ST1/03606. The second-named author was partially supported by SFB 878 \emph{Groups, Geometry and Actions} and NSF grant \#DMS--1564401.

\section{Definitions}\label{sec:defn}

Let $Y$ be a compact subset of a sphere $\SS^n\subseteq \R^{n+1}$ with a continuous action of a group $\Gamma$ coming with a finite set of generators $S$. We form the \emph{infinite cone} over~$Y$, namely $\cO Y=\{ry \st r>0, y\in Y \}\subseteq \R^{n+1}$, and equip it with the Euclidean metric $d$ and with the obvious extension of the action on $Y$. The \emph{warped cone}, denoted $\cO_\Gamma Y$, is the infinite cone equipped with the warped metric $d_\Gamma$, which is the largest metric bounded by the initial metric $d$ and satisfying $d_\Gamma(x,sx)\leq 1$, for any $x\in \cO Y$ and $s\in S$.

When the metric space $(Y,d)$ does not come with an embedding into a sphere in a normed space, we can define the infinite cone to be $\R_+ \times Y$ equipped with the metric $d$ given by $d((r,y),\, (r',y')) = |r-r'| \cdot \operatorname{diam}(Y) + \min(r,r') \cdot d(y,y')$. The above norm metric and this $\ell^1$-metric are Lipschitz equivalent (in particular coarsely equivalent) if $(Y,d)$ happens to be a subset of a sphere, so without loss of generality we will stick to this $\ell^1$-metric in more technical statements.

For brevity, we will often denote the pair $(r,y)$ by $ry$, even if $Y$ does not come with an embedding into a normed space.

We will also need some definitions from large scale geometry and the theory of group actions. 

The following notion was introduced by Gromov.

\begin{defi} A function $f\colon X\to Y$ between metric spaces is called a \emph{coarse embedding} if there exist non-decreasing unbounded functions $\rho_-,\rho_+\colon \R_+\to \R_+$ such that:
\[\rho_-\circ d_X(x,x')\leq d_Y(f(x),f(x')) \leq \rho_+\circ d_X(x,x')\,.\]
\end{defi}

The existence of a coarse embedding of a metric space $X$ into a well-behaved metric space $Y$, like a Hilbert space, is a useful regularity condition with strong consequences, as illustrated by the following theorem due to G. Yu \cite{Yu}.

\begin{theorem}
Let $X$ be a discrete metric space with bounded geometry. If $X$ admits a coarse embedding into a Hilbert space, then the coarse Baum--Connes conjecture holds for $X$.
\end{theorem}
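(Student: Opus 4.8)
The plan is to follow a Dirac--dual-Dirac strategy adapted to the coarse setting, reducing the coarse Baum--Connes conjecture for $X$ to an infinite-dimensional Bott periodicity phenomenon that is made available by the coarse embedding $f\colon X\to H$ into the Hilbert space $H$. Recall that the conjecture asserts that the assembly map
\[
\mu\colon \varinjlim_{d} K_*(P_d(X)) \longrightarrow K_*(C^*(X))
\]
from the coarse $K$-homology (the direct limit over the $K$-homology of the Rips complexes $P_d(X)$) to the $K$-theory of the Roe algebra $C^*(X)$ is an isomorphism; the bounded geometry hypothesis is what guarantees that all the operator algebras in sight are well behaved and that the estimates below can be made uniform.

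First I would introduce the twisting coefficient algebra. Writing $H=\varinjlim V_n$ as an increasing union of finite-dimensional Euclidean subspaces, I set $\mathcal{A} = \mathcal{S}\,\hat\otimes\,\mathcal{C}(H)$, where $\mathcal{S}=C_0(\R)$ is the graded algebra from the Higson--Kasparov--Trout formulation of infinite-dimensional Bott periodicity and $\mathcal{C}(H)=\varinjlim \mathcal{C}(V_n)$ is the graded Clifford $C^*$-algebra of $H$, the connecting maps being the finite-dimensional Bott homomorphisms. Using $f$, I would then build twisted Roe algebras $C^*(P_d(X);\mathcal{A})$ and their localised counterparts, in which the propagation is controlled in $X$ while the coefficient data are governed by the positions $f(x)\in H$. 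The properness of $f$ (encoded by the unbounded lower control $\rho_-$) is precisely what makes these coefficients vary slowly along bounded-propagation operators, so that the twisted construction is coarsely meaningful.

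Next I would set up the comparison. The Bott homomorphism yields a map $\beta$ from the untwisted theory into the twisted theory, and a Dirac-type operator associated to $f$ yields a map $\alpha$ in the opposite direction, both compatible with the respective assembly maps, producing a commuting diagram linking $\mu$ with the twisted assembly map $\mu_{\mathcal{A}}$. The two central tasks are then: (i) to prove that $\mu_{\mathcal{A}}$ is an isomorphism, and (ii) to prove that $\alpha\circ\beta$ induces the identity on $K$-theory. For (ii) one invokes the infinite-dimensional Bott periodicity theorem of Higson--Kasparov--Trout, whose hypotheses are met exactly because $f$ is a genuine proper coarse embedding.

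The main obstacle is (i), the ``twisted coarse Baum--Connes conjecture''. I would attack it by a Mayer--Vietoris and cutting-and-pasting induction over the coarse structure of $X$: decompose $X$ into coarsely disjoint families of uniformly bounded pieces, on which the twisted assembly map can be analysed directly, and then assemble the global statement through the six-term exact sequences attached to ideal decompositions of the twisted Roe algebras, finally passing to the limit over $d$. Here bounded geometry is essential in order to keep all the estimates uniform across the infinitely many pieces, and the delicate point is to check that the Bott maps over the individual pieces are mutually compatible enough {\textemdash} again via properness of $f$ {\textemdash} that the local isomorphisms glue to a global one. Granting (i) and (ii), a diagram chase forces $\mu$ to be an isomorphism, which completes the proof.
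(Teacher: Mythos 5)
First, a point of order: the paper does not prove this theorem at all. It is Yu's theorem, quoted verbatim from \cite{Yu} as background motivation for studying coarse embeddability of warped cones, so there is no in-paper proof against which to compare your attempt.

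Judged on its own merits, your outline faithfully reproduces the architecture of Yu's original proof: the Higson--Kasparov--Trout-type coefficient algebra built from finite-dimensional subspaces of $H$, twisted Roe and localisation algebras whose coefficients are steered by the embedding $f$, a Bott map $\beta$ and a Dirac map $\alpha$ intertwining the twisted and untwisted assembly maps, and the reduction to (i) the twisted coarse Baum--Connes conjecture and (ii) the statement that $\alpha\circ\beta$ induces the identity on $K$-theory. The decomposition you propose for (i) {\textemdash} cutting into coarsely disjoint families of uniformly bounded pieces, using properness of $f$ and bounded geometry to keep estimates uniform, and gluing via Mayer--Vietoris {\textemdash} is likewise how Yu proceeds. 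However, as written this is a roadmap rather than a proof: items (i) and (ii) are the entire content of \cite{Yu}, comprising several sections of hard analysis (in particular strong Lipschitz homotopy invariance of localisation-algebra $K$-theory, which is needed both to compute the assembly map on the bounded pieces and to establish Bott periodicity in the form actually used), and you invoke them rather than prove them. One technical slip is also worth flagging: the Higson--Kasparov--Trout algebra is the inductive limit of the algebras $\mathcal{S}\,\hat\otimes\,\mathcal{C}(V_n)$ under Bott connecting maps that do \emph{not} respect the tensor factorisation, so writing it as $\mathcal{S}\,\hat\otimes\,\mathcal{C}(H)$ with $\mathcal{C}(H)=\varinjlim \mathcal{C}(V_n)$ is incorrect {\textemdash} the $\mathcal{S}$ factor cannot be pulled out of the limit. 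In short: right strategy, correctly identified ingredients, but with the two pillars assumed rather than established, your text is a summary of Yu's proof, not an independent one.
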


As a consequence, for $X$ being a finitely generated group, coarse embedding into a Hilbert space implies the strong Novikov conjecture \cites{Yu, STY}.

As a generalisation of coarse embeddings, Chen, Wang, and Yu \cite{CWY} introduced fibred coarse embeddings and proved the \emph{maximal} coarse Baum--Connes conjecture for spaces admitting fibred coarse embeddings into a Hilbert space.

\begin{defi}\label{def:fibred}
A metric space $X$ is said to admit a \textit{fibred coarse embedding} into a normed vector space $E$ if there exists
\begin{itemize}
\item a field of isometric copies of $E$ over $X$: $\{ E_{x} \}_{x \in X}$;
\item a sequence of sections $f_n\colon X \rightarrow \bigsqcup_{x \in X}E_{x}$ (i.e.\ $f_n(x) \in E_{x}$);
\item two non-decreasing unbounded functions $\rho_{\pm}\colon[0,\infty)\to[0,\infty)$
\end{itemize}
such that for any $n\in\N$ there exists a bounded subset $K_{n}\subseteq X$ and a trivialisation:
\[\tau_n^x\colon \bigsqcup_{y\in B(x,n)} E_{y} \to B(x,n)\times E\]
for every $x \in X \setminus K_{n}$ (that is, $\tau^x_n(E_y) = \{y\} \times E$). We denote the composition of $\tau^x_n$ with the projection onto $E$ by $t^x_n$ and require that the restriction of $t^x_n$ to any $E_y$ for $y\in B(x,n)$ is an isometry onto $E$ such that:
\begin{itemize}
\item for any $y,y' \in B(x,n)$: 
\[\rho_-\circ d(y,y')\leq \| t^x_n\circ f_n(y) - t^x_n\circ f_n(y') \Vert \leq \rho_+\circ d(y,y')\,;\]
\item for any two points $x,x'\in X\setminus K_n$ such that the intersection $B(x,n)\cap B(x',n) \eqdef I_n^{x,x'}$ is non-empty, there exists an isometry $t_n^{x,x'}\!\colon E\to E$ such that $\mathrm{id}\times t_n^{x,x'}\! \colon I_n^{x,x'}\times E \to I_n^{x,x'}\times E$ is equal to the composition $\tau_n^x \circ (\tau_n^{x'})^{-1}$.
\end{itemize} 
\end{defi}

Note that in \cite{CWY} $f_n$ is not allowed to depend on $n$, but, by altering $\tau^x_n$, one can require all $f_n$ to be simply the constant zero section, hence the two definitions are equivalent.

The equivariant (group-theoretic) counterpart of the above properties is the Haagerup property, also known as a-T-menability of Gromov.

\begin{defi} Let $\mathcal E$ be a family of Banach spaces. A finitely generated group $\Gamma$ has \emph{property $P\mathcal E$} if it admits a proper isometric action by affine maps on a Banach space $E\in \mathcal E$. For $\mathcal E$ being the class of Hilbert spaces, we say that $\Gamma$ has \emph{the Haagerup property} or that $\Gamma$ is \emph{a-T-menable}.
\end{defi}
Recall that an isometric action on a Banach space is called \emph{(metrically) proper}, if for every orbit map, inverse images of balls are finite. In other words, every (equivalently: one) orbit map is a coarse embedding.

In \cref{Haagerup-to-fibred} we will need the following property.

\begin{defi} An action $\Gamma \acts Y$ is \emph{linearisable} in a Banach space $E$ if there exists an isometric representation of $\Gamma$ on $E$ and a bi-Lipschitz equivariant embedding $Y\to E$.
\end{defi}

In \cref{sec:linearisable} we verify this condition for different types of actions including subgroup actions (in particular actions on profinite completions) and isometric actions on manifolds.

\section{From a-T-menable and \texorpdfstring{$PL^p$}{PL\^{}p} groups to embeddable warped cones}

Recall a theorem of Roe \cite{Roe-cones} as generalised in \cite{Sawicki-completions}.

\begin{thm}\label{Roe-embeddable}
	Let $\Gamma\acts Y$ be an action of a finitely generated group on a compact metric space. 
	Assume the action admits an invariant probability measure and is essentially free. If the warped cone $\cO_\Gamma Y$ admits a (fibred/as\-ymptotic\footnote{By admitting an \emph{asymptotic coarse embedding} we mean admitting an \emph{asymptotically conditionally negative definite kernel} in the terminology of \cite{BaumGuentnerWillett}. This condition is a weakening of coarse embeddability. See also \cite{randomGraphs}, from which our terminology is borrowed.}) coarse embedding into a Hilbert space, then $\Gamma$ has the Haagerup property, i.e.\ it is a-T-menable.
\end{thm}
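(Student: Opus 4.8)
The plan is to build a proper conditionally negative definite function on $\Gamma$ out of the embedding by averaging a squared-distance kernel over $Y$ against the invariant measure $\mu$; properness will come from the fact that, for a free action, the warped distance between $ty$ and $t\,gy$ tends to the word length $|g|_S$ as the scale $t$ grows.

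I would first dispose of the genuine (non-fibred, non-asymptotic) case. Writing $f\colon\cO_\Gamma Y\to H$ for the coarse embedding with control functions $\rho_\pm$, set
\[
\psi_t(g)=\int_Y\bigl\|f(ty)-f(t\,gy)\bigr\|^2\dif\mu(y),\qquad g\in\Gamma,\ t>0 .
\]
Invariance of $\mu$ under $y\mapsto g_i y$ gives $\psi_t(g_i^{-1}g_j)=\int_Y\|f(t\,g_iy)-f(t\,g_jy)\|^2\dif\mu(y)$, so for $c_1,\dots,c_n\in\C$ with $\sum_i c_i=0$,
\[
\sum_{i,j}\overline{c_i}\,c_j\,\psi_t(g_i^{-1}g_j)
=\int_Y\Bigl(\sum_{i,j}\overline{c_i}\,c_j\,\|f(t\,g_iy)-f(t\,g_jy)\|^2\Bigr)\dif\mu(y)\le 0,
\]
since a squared-distance kernel in a Hilbert space is conditionally negative definite for each fixed $y$. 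Thus every $\psi_t$ is conditionally negative definite, and because $d_\Gamma(ty,t\,gy)\le|g|_S$ always we obtain the uniform bound $\psi_t(g)\le\rho_+(|g|_S)^2$.

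The core step is the matching lower bound, and this is where freeness is used. Given a finite set $F\subseteq\Gamma$ (a word-metric ball containing $g$) and $\delta>0$, essential freeness together with the finiteness of $F$ produces a set $Y_{F,\delta}$ with $\mu(Y_{F,\delta})>1-\delta$ and an $\eps>0$ such that $d(gy,hy)\ge\eps$ for all distinct $g,h\in F$ and all $y\in Y_{F,\delta}$. A standard property of the warped metric (to pass from $ty$ to $tz$ one applies some $h$, paying $|h|_S$, and then slides, paying $t\,d(hy,z)$) then gives $d_\Gamma(ty,t\,gy)=\min_{h}\bigl(|h|_S+t\,d(hy,gy)\bigr)$ up to controlled error; once $t\eps\ge\max_{g\in F}|g|_S$ the minimum is attained at $h=g$, so $d_\Gamma(ty,t\,gy)=|g|_S$ for $y\in Y_{F,\delta}$. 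Hence $\liminf_{t\to\infty}\psi_t(g)\ge(1-\delta)\,\rho_-(|g|_S)^2$, and letting $\delta\to0$ gives $\liminf_{t}\psi_t(g)\ge\rho_-(|g|_S)^2$. Using the uniform upper bound and a diagonal argument over the countable group $\Gamma$, I would extract $t_k\to\infty$ along which $\psi_{t_k}(g)$ converges for every $g$ to a limit $\psi(g)$; as a pointwise limit of conditionally negative definite functions $\psi$ is conditionally negative definite, and it satisfies $\rho_-(|g|_S)^2\le\psi(g)\le\rho_+(|g|_S)^2$, so it is proper and $\Gamma$ is a-T-menable by Schoenberg's theorem.

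Finally, for the fibred and asymptotic variants I would localise the computation. A global $f$ is no longer available, but the cocycle condition $\tau_n^x\circ(\tau_n^{x'})^{-1}=\id\times t_n^{x,x'}$ guarantees that $\|t_n^x\!\circ f_n(a)-t_n^x\!\circ f_n(b)\|$ is independent of the chart $x$ whenever $a,b\in B(x,n)$, and so defines a genuine Hilbert squared-distance kernel $K_n(a,b)$ on each set of mutually $n$-close points. For a fixed $F$ I take $n\ge 2\max_{g\in F}|g|_S$; for $t$ large all the points $t\,gy$ $(g\in F)$ lie outside $K_n$ and inside a common $n$-ball, so the averaging and negative-definiteness computation above go through verbatim with $\|f(\cdot)-f(\cdot)\|^2$ replaced by $K_n(\cdot,\cdot)$. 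The asymptotic case is identical, the only change being that the kernels are conditionally negative definite only in the limit, which is exactly what survives the passage to $\psi$. The main obstacle throughout is the lower bound of the third paragraph — controlling $d_\Gamma(ty,t\,gy)$ from below, uniformly over a set of $y$ of nearly full measure — which is precisely the point where essential freeness and the invariant measure cannot be dispensed with.
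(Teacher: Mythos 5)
Your proposal is correct, and it is instructive to compare it with the paper's treatment. The paper does not reprove \cref{Roe-embeddable} from scratch (the statement is recalled from Roe and \cite{Sawicki-completions}), but its Hilbert-space cases follow from the $L^p$-generalisations the paper does prove: \cref{coarse-to-PL^p} for genuine coarse embeddings and \cref{fibred-to-PL^p} for fibred ones. Your non-fibred argument is essentially \cref{coarse-to-PL^p} at $p=2$ in different packaging: the paper's cocycle $b(\gamma)=\gamma f-f$ on the Bochner space $L^2(\{r_n\}\times Y,\, m\times\mu;\, H)$ satisfies $\|b(\gamma)\|^2=\sum_n 2^{-n}\psi_{r_n}(\gamma)$, i.e.\ a weighted average of your kernels, and properness rests on exactly the fact you invoke, namely that on a set of large measure $d_\Gamma(ty,t\gamma y)=|\gamma|$ for $t$ large (the paper cites \cite{Sawicki-completions}*{Remark 3.1}; your uniform-continuity sketch is the argument of \cref{faithful-lemma}, with the caveat that the separation set must be chosen for all elements of a ball of radius roughly $2\max_{g\in F}|g|_S$, since the intermediate group elements of a competing short path need not lie in $F$). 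Where you genuinely diverge is the fibred case: the paper builds local cocycles and local representations on $L^p(Y,\mu;L)$ and glues them with Arnt's ultraproduct machinery \cite{Arnt}, whereas you exploit chart-independence of $\|t_n^x\circ f_n(a)-t_n^x\circ f_n(b)\|$ to obtain honest local negative-type kernels and then pass to a pointwise (diagonal) limit. Your route is more elementary {\textemdash} no ultraproducts {\textemdash} and it absorbs the asymptotic case naturally, since the defect of conditional negative definiteness vanishes in the limit; the price is that it is Hilbert-specific, as Schoenberg-type kernel arguments do not yield the $PL^p$ conclusions that the paper's affine-action formalism produces. Two harmless slips to repair: your displayed invariance identity actually computes $\psi_t(g_jg_i^{-1})$ rather than $\psi_t(g_i^{-1}g_j)$ (equivalent here because $\psi_t$ is symmetric, or simply substitute $y=g_j^{-1}z$ termwise instead), and in the fibred case one needs measurability of $(x,x')\mapsto t_n^x\circ f_n(x')$, arranged by the partition-of-unity remark in the proof of \cref{fibred-to-PL^p}.
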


In this section, we provide a partial converse to the above result: 

\begin{theorem}\label{Haagerup-to-fibred}
	Let $\Gamma\acts Y$ be an action of a finitely generated group on a compact metric space. 
Assume the action is free and linearisable in a Hilbert space, and that $\Gamma$ is a-T-menable. Then $\cO_\Gamma Y$ fibred coarsely embeds into a Hilbert space.
\end{theorem}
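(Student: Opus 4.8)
The plan is to fuse the two available pieces of data—the proper affine isometric action witnessing a-T-menability and the linearisation of $\Gamma \acts Y$—into a family of \emph{local} coarse embeddings whose transition maps are affine isometries, which is exactly the data of a fibred coarse embedding. Write the a-T-menable structure as a proper affine isometric action $\alpha_\gamma = \pi_\gamma + b(\gamma)$ of $\Gamma$ on a Hilbert space $H_0$, with unitary part $\pi$ and associated $1$-cocycle $b\colon\Gamma\to H_0$; properness means $\|b(\gamma)\|\to\infty$ with $|\gamma|_S$, so $G\defeq b\circ(\cdot)^{-1}$ is a coarse embedding of $\Gamma$ for the right-invariant word metric, with $\|G(\gamma)-G(\gamma')\|^2=\psi(\gamma\gamma'^{-1})$ for the proper conditionally negative definite $\psi$. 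From linearisability take an isometric representation $\sigma$ of $\Gamma$ on a Hilbert space $H_1$ and a bi-Lipschitz equivariant embedding $\iota\colon Y\to H_1$, so that $\iota(\gamma y)=\sigma_\gamma\iota(y)$; in particular $\Gamma$ acts on $(Y,d_Y)$ by uniformly bi-Lipschitz maps. The target of the fibred embedding will be $E\defeq H_0\oplus\R\oplus H_1$.

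First I would establish a local structure lemma for the warped metric, which is where freeness enters. Freeness and compactness give, for each $n$, a separation constant $\delta_n>0$ with $d_Y(\gamma y,\gamma' y)\ge\delta_n$ whenever $\gamma\ne\gamma'$, $|\gamma|_S,|\gamma'|_S\le n$ and $y\in Y$ (finitely many $\gamma^{-1}\gamma'$, continuity, compactness of $Y$). Using the description of $d_\Gamma$ by shortcut-paths, for a basepoint $p_0=r_0w_0$ at level $r_0>R_n\defeq Cn/\delta_n$ every point of $B(p_0,n)$ has the form $rz$ with $|r-r_0|\lesssim n$ and $z$ within $O(n/r_0)<\delta_n/2$ of a \emph{unique} orbit point $\gamma w_0$ with $|\gamma|_S\le n$; writing $z=\gamma y$ with $y=\gamma^{-1}z$ close to $w_0$ gives a unique local decomposition. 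On this ball one obtains, up to uniform constants,
\[d_\Gamma(rz,\tilde r\tilde z)\asymp |r-\tilde r|+|\gamma\tilde\gamma^{-1}|_S+\min(r,\tilde r)\,d_Y(y,\tilde y),\]
that is, the warped metric is comparable to the sum of a radial, a right-invariant word-metric, and a rescaled transverse term.

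I would then define, for each basepoint $x=r_0w_0$ with $r_0>R_n$, the local section
\[g^x_n(rz)\defeq\bigl(G(\gamma),\,r,\,r\,\iota(y)\bigr)\in H_0\oplus\R\oplus H_1,\qquad z=\gamma y,\ y\text{ near }w_0.\]
The three coordinates are coarse embeddings of the three terms of the local structure lemma—$\psi$ proper for the word part, $\iota$ bi-Lipschitz for the transverse part, the identity for the radial part—so $g^x_n$ is a coarse embedding of $B(x,n)$ with $\rho_\pm$ independent of $x$ and $n$. The essential point (the ``straightening'' of the title) is the \emph{de-rotation} $z\mapsto\iota(\gamma^{-1}z)$ in the last coordinate: the naive choice $r\iota(z)$ would send a shortcut-pair $\gamma w_0,\gamma' w_0$ (warped distance $\approx|\gamma\gamma'^{-1}|_S$) to points at distance $\approx r\,\delta_n$, destroying the upper bound, whereas the de-rotated coordinate keeps such pairs close.

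Finally I would check coherence. If $B(x,n)\cap B(x',n)\ne\emptyset$, with $x,x'$ related by the group element $\gamma_*$, then uniqueness of the decomposition forces $\gamma^{(x)}=\gamma^{(x')}\gamma_*$ and $y^{(x')}=\gamma_* y^{(x)}$ for every point in the overlap. The cocycle identity $b(\gamma_*\,\cdot)=b(\gamma_*)+\pi_{\gamma_*}b(\cdot)$ turns the $H_0$-transition into the affine isometry $\alpha_{\gamma_*^{-1}}$, while equivariance $\iota(\gamma_* y)=\sigma_{\gamma_*}\iota(y)$ turns the $H_1$-transition into the linear isometry $\sigma_{\gamma_*^{-1}}$, the radial coordinate being unchanged; hence $g^x_n=t_n^{x,x'}\circ g^{x'}_n$ for the affine isometry $t_n^{x,x'}=\alpha_{\gamma_*^{-1}}\oplus\mathrm{id}\oplus\sigma_{\gamma_*^{-1}}$ of $E$, depending only on $x,x'$ and not on the point. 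Taking $K_n\defeq\{rz:r\le R_n\}$ (bounded, since each level has warped diameter $\le r\,\diam(Y)$) and assembling the $g^x_n$ as trivialised sections yields the required fibred coarse embedding. The main obstacle is the local structure lemma together with the bookkeeping needed to make $\delta_n$, the decomposition, and $\rho_\pm$ uniform in the basepoint and in $n$; once this is in place, the coherence of the transitions is a direct consequence of the cocycle identity and the equivariance of $\iota$.
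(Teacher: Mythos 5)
Your proposal is correct and, in substance, it is the paper's proof with its two black boxes opened up and proved by hand. Your local coordinates $(\gamma,r,y)$ on a ball $B(r_0w_0,n)$ are exactly the coordinates of the auxiliary space $\Gamma\times\cO Y$ that the paper equips with the metric $d^1$; your local structure lemma (unique decomposition plus the comparison of $d_\Gamma$ with $|r-\tilde r|+|\gamma\tilde\gamma^{-1}|_S+\min(r,\tilde r)\,d_Y(y,\tilde y)$) is precisely the statement that the quotient map $(\Gamma\times\cO Y,d^1)\to\cO_\Gamma Y$ is asymptotically faithful, which is the paper's \cref{faithful-lemma} and, there as for you, the only place freeness enters; and your maps $g^x_n=(G(\gamma),r,r\iota(y))$ assemble into the same equivariant coarse embedding of $\Gamma\times\cO Y$ into $\R\oplus H_1\oplus H_0$ that the paper builds from an orbit map of the proper affine action together with the cone extension of $\iota$. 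The genuine difference is organisational: the paper then quotes a Chen--Wang--Yu-type descent theorem (\cref{CWY}), asserting that an asymptotically faithful quotient of a space with an equivariant coarse embedding yields a fibred coarse embedding of the quotient, whereas you prove this descent directly, via the coherence computation with the cocycle identity, the equivariance of $\iota$, and the assembly into a field with trivialisations. The modular route buys reusability (\cref{faithful-lemma} is recycled in the proof of \cref{A-to-amenable}, and the Banach-space version \cref{PL^p-to-fibred} is immediate since the target is a direct sum), while your inlined route is self-contained and makes explicit which affine isometries occur as transition functions. If you write it up, the two points needing care are exactly the bookkeeping you flag: well-definedness of $\gamma_*$ on an overlap (this needs the separation constant at scale comparable to $4n$, with $\delta$ chosen small against it, using the uniform bi-Lipschitz constants supplied by linearisability), and the cocycle relation $t^{x,z}_n=t^{x,x'}_n\circ t^{x',z}_n$ over triple overlaps, which is what makes the trivialisations pairwise compatible in the sense of \cref{def:fibred}; both follow from the uniqueness of the local decomposition.
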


In fact, as it will be clear from the proof, the theorem has its counterpart for $L^p$-spaces (see \cref{PL^p-to-fibred}). Hence, one gets the following interesting application.

\begin{cor} Let $\Gamma$ be a free subgroup in a compact simple Lie group $G$ that acts on $G$ with a spectral gap as established in \cite{BdS}. Then, the warped cone $\cO_\Gamma G$ admits a fibred coarse embedding into an $L^p$-space for all $1\leq p<\infty$ even though it does not coarsely embed into any $L^p$-space.
\end{cor}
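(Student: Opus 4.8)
The plan is to establish the two halves of the statement by genuinely different mechanisms, and the interest of the example lies precisely in the fact that these do not conflict: the fibred embedding is produced from the local, group-theoretic input (a-T-menability, upgraded to an $L^p$-statement), whereas the non-embeddability is forced by the global, dynamical input (the spectral gap), and a fibred coarse embedding by design only reflects the coarse geometry locally.

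For the positive assertion I would deduce everything from the $L^p$-counterpart \cref{PL^p-to-fibred} of \cref{Haagerup-to-fibred}, whose hypotheses are that the action be free, be linearisable in the relevant $L^p$-space, and that $\Gamma$ have property $PL^p$. First, the left-translation action $\Gamma\acts G$ of a nontrivial subgroup is free. Second, it is a subgroup action, hence linearisable, and I would check that the linearisation can be taken in $L^p$: a faithful finite-dimensional unitary representation embeds $G$ equivariantly and bi-Lipschitzly into a Euclidean matrix space (left translation acting by linear isometries), and the associated orthogonal representation transports, via the Gaussian functor, to an isometric representation on an $L^p$-space into which the bounded set $G$ embeds bi-Lipschitzly (through the first Wiener chaos, whose $L^p$-norm is a fixed multiple of the $L^2$-norm). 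Third, a free subgroup of a Lie group is a finitely generated free group, and free groups act freely on trees; the resulting wall structure yields proper affine isometric actions on $\ell^p$ of the edge set for every $1\le p<\infty$, so $\Gamma$ has property $PL^p$ for all such $p$. Feeding these three facts into \cref{PL^p-to-fibred} gives, for each $1\le p<\infty$, the fibred coarse embedding of $\cO_\Gamma G$ into an $L^p$-space.

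For the negative assertion I would invoke the spectral gap supplied by \cite{BdS}. By the characterisation of spectral gap in terms of warped-cone geometry \cites{Vigolo,Sawicki-superexp}, a spectrally gapped action produces levels of $\cO_\Gamma G$ that are quasi-isometric to an expander family. An ordinary $L^2$-expander satisfies a Poincar\'e inequality in $L^p$ for every $1\le p<\infty$ — this is the classical Matou\v{s}ek-type extrapolation comparing the $L^p$- and $L^2$-Poincar\'e constants — and such an inequality obstructs any coarse embedding of the family: a coarse embedding of $\cO_\Gamma G$ into $L^p$ would restrict to a uniform coarse embedding of all levels, contradicting the Poincar\'e bound. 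Hence $\cO_\Gamma G$ embeds coarsely into no $L^p$-space.

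The main obstacle I anticipate is not either half in isolation but making the $L^p$-statement uniform in $p$ on both sides: on the embedding side, producing property $PL^p$ and the $L^p$-linearisation for all $p$ simultaneously (in particular for $p>2$, where one cannot merely restrict a Hilbert-space structure), and on the obstruction side, upgrading the $L^2$-spectral gap of \cite{BdS} to non-embeddability into every $L^p$. Both are governed by the same principle — Hilbertian data extrapolates to all $L^p$, via the Gaussian functor for the construction and via the Matou\v{s}ek comparison for the obstruction — so the cleanest organisation is to isolate this extrapolation once and apply it symmetrically to the two directions.
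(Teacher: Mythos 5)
Your proposal is correct, and its skeleton coincides with the paper's (very short) proof: the positive half is fed through \cref{PL^p-to-fibred} using freeness of the translation action, linearisability, and property $PL^p$ of the free group $\Gamma$; the negative half is forced by the spectral gap. You differ in how you supply two of the ingredients, and both of your substitutes work. For linearisability, the paper invokes \cref{manifold}: the Kuratowski-type embedding of \cref{Kuratowski} into $L^p(G,\mathrm{Haar})$ is shown to be bi-Lipschitz on any Riemannian manifold via a law-of-cosines estimate; you instead compose a faithful finite-dimensional unitary representation (an equivariant bi-Lipschitz embedding into matrix space on which $\Gamma$ acts orthogonally) with the Gaussian functor, using that the $L^p$- and $L^2$-norms are proportional on the first Wiener chaos. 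Your argument is cleaner and yields explicit distortion constants, but it exploits the group structure of $G$; the paper's lemma applies to arbitrary isometric actions on Riemannian manifolds, which it needs elsewhere. For non-embeddability, the paper simply cites \cite{Nowak--Sawicki}, which derives a Poincar\'e-type obstruction for the warped cone directly from the spectral gap, uniformly in $1\leq p<\infty$; you instead pass through \cite{Vigolo} (levels are quasi-isometric to expanders) and Matou\v{s}ek extrapolation. That route is legitimate, but note that it silently uses the fact that a coarse embedding of $\cO_\Gamma G$ restricts to a \emph{uniform} coarse embedding of the levels $(G,d_r)$ with their intrinsic warped metrics --- the metric induced on a level by the cone is a priori smaller than $d_r$, so this comparison is a genuine (known) lemma, \cite{Sawicki-completions}*{Lemma 4.1}, which you should cite explicitly. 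Two small slips, neither fatal: ``a free subgroup of a Lie group is a finitely generated free group'' is false in general (infinite-rank free subgroups exist), though harmless here since $\Gamma$ is finitely generated by the standing assumptions; and the conclusion requires the remark that $\R\oplus E_1\oplus E_2$ appearing in \cref{PL^p-to-fibred} is again an $L^p$-space when $E_1,E_2$ are.
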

\begin{proof}
The embeddability part follows from \cref{Haagerup-to-fibred} and \cref{PL^p-to-fibred} as subgroup actions on compact Lie groups
are linearisable
by \cref{manifold}. Non-embeddability follows from the spectral gap by \cite{Nowak--Sawicki}.
\end{proof}

\begin{ex} Freeness of the action is crucial in \cref{Haagerup-to-fibred}. The action $\mathrm{SL}_2(\Z)\acts \T^2$ is essentially free and $\mathrm{SL}_2(\Z)$ is a-T-menable, but $\cO_{\mathrm{SL}_2(\Z)} \T^2$ does not fibred coarsely embed into any $L^p$-space for $1\leq p<\infty$.
\end{ex}
\begin{proof}
By the celebrated result of Selberg, $\mathrm{SL}_2(\Z)$ has property $(\tau)$ with respect to the congruence subgroups $\ker\left(\mathrm{SL}_2(\Z) \to \mathrm{SL}_2(\Z/n\Z)\right)$. As the $\mathrm{SL}_2(\Z)$-action on the orbit of $\left(\frac{1}{n},0\right)\in\T^2$ factors through $\mathrm{SL}_2(\Z/n\Z)$, the sequence of Schreier graphs $\mathrm{Sch}\left(\left(\frac{1}{n},0\right), \mathrm{SL}_2(\Z)\right)$ forms an expander. Since for each $n$ there exists $r_n$ such that $\mathrm{Sch}\left(\left(\frac{1}{n},0\right), \mathrm{SL}_2(\Z)\right)\times [r_n,\infty)$ embeds isometrically into $\cO_{\mathrm{SL}_2(\Z)} \T^2$ \cite{Sawicki-completions}*{Remark~3.1}, this is an obstruction for the existence of a fibred coarse embedding.
\end{proof}

One may argue that this action is not only non-free but also non-isometric and non-linearisable, so the example is not convincing evidence that freeness is important in \cref{Haagerup-to-fibred}. Such evidence will be given in \cref{ball-action}, which exhibits that the existence of \emph{one} fixed point within a linearisable and otherwise free action is enough for the conclusion of \cref{Haagerup-to-fibred} to fail.

\begin{longRem} The above example shows that the amenable case and the a-T-menable case differ. By \cite{Roe-cones} (see \cref{Roe-amenable-actions} below), under mild assumptions on~$Y$ and the Lipshitzness of the action, the warped cone $\cO_\Gamma Y$ always has property~A for an amenable $\Gamma$. However, for fibred coarse embeddability of the warped cone we need freeness of the action in addition to a-T-menability of the group.

This may be somehow reminiscent of the fact that amenability is inherited by quotients and a-T-menability is not. However, compare \cref{ball-action}, where there are no finite quotients involved.
\end{longRem}

We will need some preparation in order to obtain \cref{Haagerup-to-fibred}. The key technique is ``straightening'' the warped cone by the following construction. 

\begin{defi}\label{defi:straighten}
	Let $(X,d)$ be a metric space with a continuous action of a finitely generated group $\Gamma$, with $S$ being a finite set of generators. The \emph{twisted metric} $d^1$ on the Cartesian product $\Gamma \times X$ is the largest metric such that $d^1((\gamma, x), (s\gamma, x)) = 1$ and $d^1((\gamma, x), (\gamma, x')) \leq d(\gamma x, \gamma x')$ for all $s\in S\setminus \{e\}$, $\gamma\in \Gamma$, and $x\in X$.
\end{defi}

If $d$ is $\Gamma$-invariant, the metric $d^1$ is just the $\ell^1$-product metric of $d$ and the right-invariant word metric on $\Gamma$.

Note that the action on $(\Gamma \times X,d^1)$ given by $\gamma (\eta, x) = (\eta\gamma^{-1},\gamma x)$ is isometric and that the quotient space of this action can be identified with $X$ via the quotient map $(\gamma,x)\mapsto \gamma x$. Now recall from \cite{Roe-cones} that the warped metric $d_\Gamma$ for a general $\Gamma$-space $(X,d)$ (not necessarily a cone $\cO Y$) is the largest metric bounded by $d$ such that $d_\Gamma(sx, x)\leq 1$ for any $x\in X$  and $s\in S$. 

\begin{prop} The warped metric $d_\Gamma$ on $X$ is equal to the quotient metric of $d^1$, that is, for any $x, x' \in X$, 
	\begin{equation}\label{quotient metric}
		d_\Gamma(x,x') = \inf_\gamma d^1((e,x), (\gamma, \gamma^{-1}x'))\,.
	\end{equation}
\end{prop}
\begin{proof}
	Fix $x, x' \in X$. 
	Let us consider the left-hand side of \cref{quotient metric}. By \cite{Roe-cones}*{Proposition~1.6} it is equal to the infimum of \emph{mileages} of sequences of the form 
\[(x , \; x_1 , \; s_1 x_1 , \; x_2 , \; s_2 x_2 , \; \cdots , \; x_n , \; s_n x_n , \; x')\,,\]
where $s_i\in S$ and the mileage is defined as
\begin{equation}\label{distance downstairs}
d(x,x_1) + 1 + d(s_1x_1, x_2) + 1 + \cdots + 1 + d(s_nx_n, x') \,.
\end{equation}
The mileage formula corresponds to the two conditions $d_\Gamma\leq d$ and $d_\Gamma(x,sx)\leq 1$ in the definition of the warped metric.

The right-hand side of \cref{quotient metric} is the infimum of the distance $d^1((e, x), (\gamma, \gamma^{-1}x'))$ over $\gamma\in \Gamma$.
Similarly as above, this distance is the infimum of the mileages of sequences of the form
\begin{multline*}
(e,x) , \,(e, x_1) , \,(s_1, x_1) , \,(s_1, s_1^{-1}x_2) , \,(s_2 s_1, s_1^{-1}x_2) , \cdots \\
 \ldots , \,(s_{n-1}\ldots s_1, s_1^{-1}\ldots s_{n-1}^{-1} x_n) , \,(s_n \ldots s_1, s_1^{-1}\ldots s_{n-1}^{-1} x_n),\\
  (s_n \ldots s_1, s_1^{-1}\ldots s_n^{-1} x') = (\gamma, \gamma^{-1}x')\,,
\end{multline*}
where the mileage of such a sequence equals
\begin{multline}\label{distance upstairs}
d(x,x_1) + 1 + d(s_1\cdot x_1,\, s_1\cdot s_1^{-1} x_2) + 1 + \cdots \\
+ 1 + d((s_n \ldots s_1)\cdot s_1^{-1}\ldots s_{n-1}^{-1} x_n,\; (s_n \ldots s_1)\cdot s_1^{-1}\ldots s_n^{-1} x') \, ,
\end{multline}
where, again, the definition comes from the two conditions defining $d^1$.

It happens that the expressions in lines \eqref{distance downstairs} and \eqref{distance upstairs} are the same, which ends the proof.
\end{proof}

We remark that the minimum is always reached in \cref{quotient metric}, since for fixed $x$ and $x'$, we have $d^1((e,x), (\gamma, \gamma^{-1}x')) \to \infty$ as $\gamma \to \infty$.  
As an immediate consequence, we have: 

\begin{cor}\label{cor:ball-quotient}
	For any $x \in X$ and $m > 0$, the ball $B_{d_\Gamma}(x, m)$ under the warped metric is the image of the ball $B_{d^1}((e,x), m)$ in $\Gamma \times X$ under the quotient map $\Gamma \times X \to X$, $(\gamma, x) \mapsto \gamma x$. \qed
\end{cor}

We will now establish, under the assumption of freeness, the following crucial property for the quotient map $(\Gamma\times \cO Y,d^1) \to \cO_\Gamma Y$.

\begin{defi}[cf.\ \cite{expgirth1}]\label{def:asymptotically-faithful} A surjective map $q\colon Z\to X$ between metric spaces is said to be \emph{asymptotically faithful} if for every $N\in \N$ there is a subset $A\subseteq X$ with a bounded complement such that for any $z \in q^{-1}(A)$, the map $q$ restricts to an isometry between the balls $B(z, N)$ and $B(q(z), N)$.  
\end{defi}

\begin{prop}\label{faithful-lemma} The quotient map $(\Gamma\times \cO Y,d^1) \to \cO_\Gamma Y$  is asymptotically faithful if and only if the action $\Gamma\acts Y$ is free.
\end{prop}

The proof of \cref{faithful-lemma} will be split between a number of lemmata, which will also be utilised in \cref{sec:A}.
Before starting the proof, let us point out that the usefulness of the freeness condition is rooted in the following simple observation based on the compactness of $Y$.

\begin{lem-no-proof}\label{lem:tower}
	Let $\Gamma \acts Y$ be a free action on a compact space. Then for any $N \in \N$, there exists $\eps>0$ such that for any $y \in Y$, the collection of balls $\left\{  B_d(\gamma y,\eps) \st \gamma \in B(e, N) \right\}$ is disjoint; in other words, the map $q \colon \Gamma\times Y \to Y$, $(\gamma, y) \mapsto \gamma y$ is injective on the ``twisted rectangle'' 
	\[ \bigsqcup_{\gamma \in B(e, N)} \{\gamma\} \times \gamma^{-1} B_d(\gamma y,\eps)\,. \qedhere \]
\end{lem-no-proof}
 
When the action is isometric, the twisted rectangle above is simply the product $B(e, r) \times B_d(y,\eps)$ of balls. Even when the action is not isometric, thanks to the compactness of $Y$ one can still prove an analogous statement with this product of balls replacing the twisted rectangle, and the same can be said about \cref{lem:ball-in-box} below.

Next we explain how such a twisted rectangle is related to balls in $\Gamma \times \cO Y$ under the twisted metric $d^1$ coming from the metric on the cone $\cO Y$ defined in the second paragraph of \cref{sec:defn}. 

\begin{lem}\label{lem:ball-in-box}
	For any $N \in \mathbb{N}$ and $\eps > 0$, there is $R > 0$ such that for any $y \in Y$ and $r \geq R$, the ball $B_{d^1}((e,r , y), N)$ in $\Gamma \times \cO Y$ under the twisted metric is contained in the ``twisted cuboid'' 
	\[
		\bigsqcup_{\gamma \in B(e, N)} \{\gamma\} \times \left[r-\tfrac{N}{\operatorname{diam}(Y)},\; r+ \tfrac{N}{\operatorname{diam}(Y)} \right] \times  \gamma^{-1} B_d(\gamma y, \eps) \, , 
	\]
	where $d$ is the original unwarped metric on $Y$. 
\end{lem}

\begin{proof}
	Fix $N \in \mathbb{N}$ and $\eps > 0$. 
	By the compactness of $Y$ and the finiteness of the ball $B(e, N) \subseteq \Gamma$, we may find $\delta > 0$ such that for any $y_0, \ldots, y_N \in Y$ and any $\gamma_1, \ldots, \gamma_N \in B(e, N)$, we have 
	\[
		\sum_{i=1}^N d(\gamma_i y_{i-1},  \gamma_i y_{i}) \leq  \eps \qquad \text{whenever} \qquad \sum_{i=1}^N d( y_{i-1},  y_{i}) \leq \delta \, .
	\]
	Choose $R > N / \operatorname{diam}(Y)$ such that
	\begin{equation}\label{ineq:delta}
		\frac{N}{R - N / \operatorname{diam}(Y)}  \leq  \delta \, .	
	\end{equation}

	We claim this $R$ meets our requirement. To see this, we fix $y \in Y$ and $r \geq R$. Writing $x = ry$ for a point in the cone $\cO Y$, the ball $B_{d^1}((e,x), N)$ contains all points $(\gamma, \gamma^{-1}x')$ such that there is a sequence 
	\begin{multline*}
	(e,x) = (e,x_0),\, (e, x_1),\, (s_1, x_1),\, (s_1, s_1^{-1}x_2),\, (s_2 s_1, s_1^{-1}x_2),\, \cdots \\
	\cdots,\, (s_{n-1}\ldots s_1, s_1^{-1}\ldots s_{n-1}^{-1} x_n),\, (s_n \ldots s_1, s_1^{-1}\ldots s_{n-1}^{-1} x_n), \\
	(s_n \ldots s_1, s_1^{-1}\ldots s_n^{-1} x_{n+1}) = (\gamma, \gamma^{-1}x')
	\end{multline*}
	of mileage at most $N$, where $s_i\in S$ and $x_i \in \cO Y$.
	Recall that this mileage is $d(x,x_1) + 1 + d(s_1 x_1, x_2) + 1 + \cdots + 1 + d(s_n x_n, x)$, which equals:
	\[n + \sum_{i=0}^n \left( |r_i - r_{i+1}| \cdot \operatorname{diam}(Y) + \min(r_i, r_{i+1}) \cdot  d(s_i y_i, y_{i+1}) \right),\]
	where we write $x_i = r_i \, y_i \in \cO Y$ for $i\in \{0,\ldots, n+1\}$ and put $s_0=e$.  
	
	Hence, we have: 
	\begin{itemize}[itemsep=10pt]
		\item $n \leq N$, which implies $\gamma \in B(e, N)$, 
		\item $\sum_{i=0}^n  |r_i - r_{i+1}| \cdot \operatorname{diam}(Y) \leq N$, which implies by the triangle inequality $ |r - r_{i}| \leq \frac{N}{\operatorname{diam}(Y)}$ and thus also $ r_i \geq R - \frac{N}{\operatorname{diam}(Y)}$ for any $i$, and 
		\item $ \sum_{i=0}^n  \min(r_i, r_{i+1}) \cdot  d(s_i y_i, y_{i+1}) \leq N$, which implies 
		\[
			\sum_{i=0}^n  d(s_i y_i, y_{i+1}) \leq \frac{ \sum_{i=0}^n  \min(r_i, r_{i+1}) \cdot  d(s_i y_i, y_{i+1})}{ R - N / \operatorname{diam}(Y)}  \leq \delta
		\]
		(by \cref{ineq:delta}), and thus, writing $\gamma_i = s_n s_{n-1} \cdots s_i$ for $i = 1, \ldots, n$ and $\gamma_{n+1} = e$, by our choice of $\delta$ we have
		\[
			d(\gamma y_0, y_{n+1}) \leq \sum_{i=0}^n d(\gamma_{i} y_i, \gamma_{i+1} y_{i+1}) = \sum_{i=0}^n d(\gamma_{i+1}  s_i y_i, \gamma_{i+1}  y_{i+1}) \leq \varepsilon\,.
		\]
	\end{itemize}
	Taken together, the above bullet points show that 
	\begin{align*}
		(\gamma, \gamma^{-1}x') & = (\gamma, r_{n+1}, \gamma^{-1}y_{n+1}) \\
		& \in \bigsqcup_{\eta \in B(e, N)} \{\eta\} \times \left[r-\tfrac{N}{\operatorname{diam}(Y)}, r+ \tfrac{N}{\operatorname{diam}(Y)} \right] \times   \eta^{-1} B_d(\eta y, \eps) \,, 
	\end{align*}
	as desired. 
\end{proof}

The following is an immediate consequence of \cref{lem:ball-in-box} and \cref{cor:ball-quotient}. 

\begin{cor}\label{cor:ball-in-tower}
	For any $N \in \mathbb{N}$ and $\eps > 0$, there is $R > 0$ such that for any $y \in Y$ and $r \geq R$, the ball $B_{d_\Gamma}(r y, N)$ in the warped cone $\cO_\Gamma Y$ is contained in 
	\[
		\left[r-\tfrac{N}{\operatorname{diam}(Y)},\; r+ \tfrac{N}{\operatorname{diam}(Y)} \right] \times  \bigcup_{\gamma \in B(e, N)} B_d(\gamma y, \eps) \, ,
	\]
	where $d$ is the original unwarped metric on $Y$. 
	\qed
\end{cor}

We are now ready to prove \cref{faithful-lemma}. 

\begin{proof}[Proof of \cref{faithful-lemma}]
	It is straightforward to show that the freeness of the action $\Gamma\acts Y$ is implied by the asymptotic faithfulness of the quotient map $q \colon (\Gamma\times \cO Y,d^1) \to \cO_\Gamma Y$: if there is $\gamma\neq e$ and $y \in Y$ such that $\gamma y = y$, then points $(e,r,y)$ and $(\gamma, r, y)$ are mapped to the same point $(r,y)$, even though their distance is $|\gamma|$, which contradicts asymptotic faithfulness.

To prove the converse, we first claim that for any $N\in \N$ there exists $R_N > 0$ such that for any $r \geq R_N$ and $y \in Y$, the quotient map $q$ is injective on the ball $B_{d^1}((e,r , y),\, N)$. Indeed, since the action is free, by \cref{lem:tower}, there exists $\eps>0$ such that for any $y \in Y$, the map $\Gamma\times Y \to Y$, $(\gamma, y) \mapsto \gamma y$ is injective on the twisted rectangle $\bigsqcup_{\gamma \in B(e, N)} \{\gamma\} \times \gamma^{-1} B_d(\gamma y,\eps)$. Then by \cref{lem:ball-in-box}, there is $R > 0$ such that for any $y \in Y$ and $r \geq R$, the ball $B_{d^1}((e,r , y), N)$ in $\Gamma \times \cO Y$ under the twisted metric is contained in the twisted cuboid
\[
	\bigsqcup_{\gamma \in B(e, N)} \{\gamma\} \times \left[r-\tfrac{N}{\operatorname{diam}(Y)},\; r+ \tfrac{N}{\operatorname{diam}(Y)} \right] \times  \gamma^{-1} B_d(\gamma y, \eps) \, . 
\]
Combining these two facts proves the claim.

Now, to prove the quotient map $q$ is asymptotically faithful, it suffices to show for every $N$ there exists $R_N'$ such that the quotient map $q$ becomes isometric when restricted to $B_{d^1}((\gamma, x),\,N)$ with $x=ry$ when $r\geq R_n'$.
Since the diagonal action $\Gamma \acts \Gamma\times \cO Y$ is isometric, it suffices to consider balls centred at points of the form $(e,x)$ with $x \in \cO Y$. 
Let $(\gamma_1,x_1)$ and $(\gamma_2,x_2)$ be two points in the $N$-ball at $(e,x)$. They are mapped to points $\gamma_1x_1$ and $\gamma_2 x_2$ in the quotient, whose distance is $M = \min_\gamma d^1((\gamma_1,x_1), (\gamma_2\gamma,\gamma^{-1}x_2))$. We need to check that this equals $O=d^1((\gamma_1,x_1), (\gamma_2,x_2))$.
Clearly, $M\leq O\leq 2N$. Hence, by the triangle inequality we get that for the optimal $\gamma$ yielding the minimum we have $d^1((e,x),(\gamma_2\gamma, \gamma^{-1}x_2)\leq 3N$. Thus, if we take $R_N' \geq R_{3N}$ so that the quotient map $q$ is injective on $3N$-balls, we get that $\gamma$ must be the identity, which ends the proof.
\end{proof}

To prove \cref{Haagerup-to-fibred}, we will need one more result. 
\begin{theorem}[cf.\ \cite{CWY}*{Theorem 2.2}]\label{CWY}
Let $Z$ be a metric space with a free isometric action of $\Gamma$ such that the quotient map $Z\to Z/\Gamma$ is asymptotically faithful and that $Z$ admits an equivariant coarse embedding into a normed space $E$ with some isometric action. Then, $Z/\Gamma$ admits a fibred coarse embedding into $E$.
\end{theorem}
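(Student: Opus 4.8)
The plan is to transport the equivariant coarse embedding upstairs to a fibred one on $X\defeq Z/\Gamma$, using asymptotic faithfulness to identify $Z$ locally with its quotient. Write $q\colon Z\to X$ for the quotient map and let $F\colon Z\to E$ be the given equivariant coarse embedding, with control functions $\rho_\pm$ and with $\Gamma$ acting isometrically on $E$ so that $F(\gamma z)=\gamma\cdot F(z)$. Since the action on $Z$ is free, each fibre $q^{-1}(x)$ is a single free $\Gamma$-orbit. I would build the \emph{field} of isometric copies of $E$ by setting $E_x\defeq\left(q^{-1}(x)\times E\right)/\Gamma$ for the diagonal action $\gamma\cdot(z,v)=(\gamma z,\gamma\cdot v)$; because $\Gamma$ acts isometrically on $E$, the quotient distance is well defined and every choice of lift $z\in q^{-1}(x)$ identifies $E_x$ isometrically with $E$. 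The \emph{section} is then forced to be the single ($n$-independent) section $f(x)\defeq[z,F(z)]$, which is well defined precisely because $F$ is equivariant: replacing $z$ by $\gamma z$ turns $F(z)$ into $\gamma\cdot F(z)$, so that $[z,F(z)]=[\gamma z,\gamma\cdot F(z)]$.

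For the \emph{trivialisations}, fix a large fixed multiple of $n$ (taking $N=6n$ is safe) and let $A\subseteq X$ be the co-bounded subset on which $q$ restricts to an isometry on every $N$-ball; set $K_n\defeq X\setminus A$, which is bounded. For $x\notin K_n$ choose a lift $\tilde x\in q^{-1}(x)$. Asymptotic faithfulness makes $q$ a bijective isometry from $B(\tilde x,n)$ onto $B(x,n)$ (after a harmless enlargement of radii to realise the infimum defining the quotient metric), so every $y\in B(x,n)$ has a unique lift $\tilde y\in B(\tilde x,n)$; using this lift to identify $E_y\cong E$ produces $t^x_n$ and hence $\tau^x_n$. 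With this definition $t^x_n f(y)=F(\tilde y)$, so for $y,y'\in B(x,n)$ one has $\| t^x_n f(y)-t^x_n f(y')\|=\|F(\tilde y)-F(\tilde y')\|$; since $\tilde y,\tilde y'\in B(\tilde x,n)$ lie in a common ball inside the faithful region, $d_Z(\tilde y,\tilde y')=d_X(y,y')$, and the two inequalities
\[\rho_-\circ d_X(y,y')\le \| t^x_n f(y)-t^x_n f(y')\|\le \rho_+\circ d_X(y,y')\]
follow at once from the corresponding inequalities for $F$. This step is routine: asymptotic faithfulness simply converts the $d_Z$-estimates for $F$ into the required $d_X$-estimates.

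The \textbf{main obstacle}, and the only place the enlarged scale $N$ is really used, is the cocycle condition on overlapping balls. For $y\in I^{x,x'}_n=B(x,n)\cap B(x',n)$ the two lifts $\tilde y\in B(\tilde x,n)$ and $\tilde y\,'\in B(\tilde x',n)$ both lie over $y$, so $\tilde y=\gamma(y)\,\tilde y\,'$ for a unique $\gamma(y)\in\Gamma$; unwinding the identifications gives $t^x_n(v)=\gamma(y)\cdot t^{x'}_n(v)$ for $v\in E_y$. The crux is to show that $\gamma(y)$ is \emph{independent of} $y$. Fixing $y_0\in I^{x,x'}_n$ with associated $\gamma_0$, the point $\gamma_0\tilde y\,'$ satisfies $d_Z(\gamma_0\tilde y\,',\tilde y_0)=d_Z(\tilde y\,',\tilde y_0')=d_X(y,y_0)\le 2n$ by $\Gamma$-invariance of $d_Z$ together with the local isometry on $B(\tilde x',n)$ (here $\gamma_0\tilde y_0'=\tilde y_0$). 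Hence both $\tilde y$ and $\gamma_0\tilde y\,'$ are lifts of $y$ lying in $B(\tilde x,3n)\subseteq B(\tilde x,6n)$, and injectivity of $q$ on the $6n$-ball forces $\tilde y=\gamma_0\tilde y\,'$, i.e.\ $\gamma(y)\equiv\gamma_0$.

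Consequently $t^{x,x'}_n\defeq(\gamma_0\cdot)$ is a single isometry of $E$ with $\id\times t^{x,x'}_n=\tau^x_n\circ(\tau^{x'}_n)^{-1}$ on $I^{x,x'}_n\times E$, which is the last requirement of \cref{def:fibred}. Collecting the data $(\{E_x\}_x,\,f,\,\tau^x_n,\,\rho_\pm,\,K_n)$ thus exhibits $X=Z/\Gamma$ as fibred coarsely embeddable into $E$, completing the proof. I expect the scale bookkeeping in the last paragraph (choosing $N$ as a fixed multiple of $n$ so that the relevant lifts all sit in one ball of injectivity, and matching $K_n$ to it) to be the only genuinely delicate point; everything else is a direct transfer along the locally isometric quotient map.
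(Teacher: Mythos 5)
Your proof is correct. Note that the paper itself offers no proof of this statement at all: it is quoted with the attribution ``cf.\ \cite{CWY}*{Theorem 2.2}'', so the only comparison available is with the argument of Chen--Wang--Yu, and yours is essentially that standard argument {\textemdash} build the fibres as quotients $(q^{-1}(x)\times E)/\Gamma$, use equivariance of $F$ to get a well-defined ($n$-independent) section, trivialise over $B(x,n)$ via the unique lifts supplied by asymptotic faithfulness, and identify the transition maps on overlaps with the action of a single deck transformation $\gamma_0$, whose constancy in $y$ you correctly reduce to injectivity of $q$ on a ball of a fixed multiple of $n$ (your $3n$, padded to $6n$). The scale bookkeeping and the $\varepsilon$-enlargement needed because the infimum defining the quotient metric may not be attained are handled adequately, so the argument stands as a self-contained proof of \cref{CWY}.
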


\begin{proof}[Proof of \cref{Haagerup-to-fibred}]
By \cref{faithful-lemma} the quotient map $\Gamma\times \cO Y \to \cO_\Gamma Y$ is asymptotically faithful. 
The equivariant bi-Lipschitz embedding $Y\to H_1$ can be extended to an equivariant bi-Lipschitz embedding $\cO Y \to \R \oplus H_1$ (where the action on $\R$ is trivial).
Similarly, as $\Gamma$ is assumed to be a-T-menable, there exists a Hilbert space $H_2$ with an isometric action of $\Gamma$ and an equivariant coarse embedding $\Gamma\to H_2$ given by an arbitrary orbit map $\gamma \mapsto \gamma \cdot \xi \in H_2$. Hence, the product $\Gamma \times \cO Y$ admits an equivariant coarse embedding into $\R \oplus H_1 \oplus H_2$. The claim follows from \cref{CWY}.
\end{proof}

By the same argument one can obtain similar results for any Banach space.

\begin{cor}\label{PL^p-to-fibred} Assume that the action $\Gamma \acts Y$ is free and linearisable in a
Banach space $E_1$, and that $\Gamma$ has property $PE_2$ for some Banach space $E_2$. Then $\cO_\Gamma Y$ fibred coarsely embeds into $\R \oplus E_1 \oplus E_2$.
\end{cor}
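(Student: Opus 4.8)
The plan is to reuse the proof of \cref{Haagerup-to-fibred} essentially verbatim, since the two load-bearing tools---\cref{faithful-lemma} and \cref{CWY}---are already stated for arbitrary normed (Banach) spaces. Only the construction of the equivariant coarse embedding invoked Hilbert-space structure, through linearisability in a Hilbert space and through a-T-menability; these I would simply replace by linearisability in $E_1$ and by property $PE_2$.

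Concretely, freeness of $\Gamma\acts Y$ lets me apply \cref{faithful-lemma}: the quotient map $q\colon(\Gamma\times\cO Y,d^1)\to\cO_\Gamma Y$, $(\eta,x)\mapsto\eta x$, is asymptotically faithful. The isometric $\Gamma$-action $\gamma\cdot(\eta,x)=(\eta\gamma^{-1},\gamma x)$ is free---freeness is automatic from the right-multiplication coordinate---and has quotient $\cO_\Gamma Y$, so all hypotheses of \cref{CWY} about the $\Gamma$-space $Z=\Gamma\times\cO Y$ hold once I exhibit an equivariant coarse embedding of $Z$ into a Banach space carrying an isometric action.

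I would assemble this embedding into $\R\oplus E_1\oplus E_2$ one coordinate at a time. Linearisability in $E_1$ provides an isometric representation $\pi$ of $\Gamma$ on $E_1$ and an equivariant bi-Lipschitz embedding $Y\to E_1$, which extends via the radial coordinate to an equivariant bi-Lipschitz (hence coarse) embedding $\iota\colon\cO Y\to\R\oplus E_1$, with $\Gamma$ acting trivially on $\R$ and by $\pi$ on $E_1$. Property $PE_2$ provides a proper affine isometric action $\alpha$ of $\Gamma$ on $E_2$; any orbit map is a coarse embedding, and its inverse twist $p(\eta)=\alpha(\eta^{-1})\xi$ obeys $p(\eta\gamma^{-1})=\alpha(\gamma)p(\eta)$, matching the action on the group coordinate. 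Then $\Phi(\eta,x)=(\iota(x),p(\eta))$ is $\Gamma$-equivariant for the diagonal affine isometric action on $\R\oplus E_1\oplus E_2$, and \cref{CWY} turns it into the desired fibred coarse embedding of $\cO_\Gamma Y=Z/\Gamma$.

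The one point needing genuine verification---and where the linearisability hypothesis is used quantitatively---is that $\Phi$ is a coarse embedding for the \emph{warped product} metric $d^1$ rather than for the bare $\ell^1$ sum of the word metric on $\Gamma$ and the cone metric on $\cO Y$. Here I would observe that linearisation conjugates the (a priori non-isometric) cone action into the genuinely isometric representation $\pi$: if $\iota\colon Y\to E_1$ is bi-Lipschitz and equivariant, then $d(\gamma y,\gamma y')$ is comparable to $\|\pi(\gamma)(\iota(y)-\iota(y'))\|=\|\iota(y)-\iota(y')\|$, hence to $d(y,y')$ with constants independent of $\gamma$. Thus the cone metric is \emph{uniformly} bi-Lipschitz $\Gamma$-invariant, which forces $d^1$ to be uniformly bi-Lipschitz to the $\ell^1$ product metric; for the latter a direct sum of coarse embeddings of the factors is manifestly a coarse embedding, and the control functions $\rho_\pm$ for $\Phi$ can be read off from those of $\iota$ and of the orbit map. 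This is precisely the estimate already underlying \cref{faithful-lemma}, so no new difficulty arises beyond bookkeeping of constants.
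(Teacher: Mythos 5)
Your proposal is correct and takes essentially the same route as the paper: the paper deduces \cref{PL^p-to-fibred} by running the proof of \cref{Haagerup-to-fibred} verbatim with $E_1$, $E_2$ in place of the Hilbert spaces, i.e.\ \cref{faithful-lemma} plus \cref{CWY} applied to the equivariant coarse embedding of $(\Gamma\times\cO Y,d^1)$ into $\R\oplus E_1\oplus E_2$ built from the linearisation and an orbit map of the proper affine isometric action. Your last paragraph (that the isometric representation forces the action to be uniformly bi-Lipschitz, so $d^1$ is comparable to the $\ell^1$ product metric and the direct-sum map is coarse for $d^1$) correctly fills in a verification that the paper leaves implicit.
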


By combining the theorem of Roe (including its $L^p$-version, \cref{fibred-to-PL^p}, which we prove in \cref{sec:embeddable}) with the above results, we get the following.

\begin{cor}\label{PX-fibred-equivalence}
	Let $\Gamma\acts Y$ be an action of a finitely generated group on a compact metric space. 
Assume the action is free and linearisable in a Hilbert space (respectively, an $L^p$-space) and admits an invariant measure. Then the warped cone $\cO_\Gamma Y$ admits a fibred coarse embedding into a Hilbert space (respectively, an $L^p$-space) if and only if $\Gamma$ is a-T-menable (respectively, has property $PL^p$).
\end{cor}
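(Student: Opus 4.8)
The plan is to deduce the biconditional by combining, for each of the two directions, the relevant results already available in the paper, taking care that the hypotheses line up and that in the $L^p$ case the ambient space really is an $L^p$-space.

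For the implication that a-T-menability (respectively, property $PL^p$) forces fibred embeddability, I would invoke \cref{Haagerup-to-fibred} in the Hilbert case and \cref{PL^p-to-fibred} in the $L^p$ case. The freeness and linearisability hypotheses assumed here are precisely what those results require, and a-T-menability (respectively, property $PL^p$) supplies the remaining input. Applying \cref{PL^p-to-fibred} with $E_1 = E_2$ an $L^p$-space yields a fibred coarse embedding into $\R \oplus E_1 \oplus E_2$; to land in an honest $L^p$-space I would take the $\ell^p$-direct sum and note that $\R$ is isometric to a one-dimensional $L^p$-space, while an $\ell^p$-sum of $L^p$-spaces is again an $L^p$-space (over the disjoint union of the underlying measure spaces). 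Note that the invariant measure plays no role in this direction.

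For the converse, that fibred embeddability forces a-T-menability (respectively, property $PL^p$), I would appeal to \cref{Roe-embeddable} in the Hilbert case and to its $L^p$-counterpart \cref{fibred-to-PL^p} in the $L^p$ case. These results do not use linearisability; instead they require an invariant \emph{probability} measure together with essential freeness. Since $Y$ is compact, any finite invariant measure normalises to an invariant probability measure, and freeness of the action trivially implies essential freeness, as the set of points with nontrivial stabiliser is empty. With these observations the cited theorems apply directly and return a-T-menability (respectively, property $PL^p$) of $\Gamma$.

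The argument is thus essentially bookkeeping, with no new ideas needed; the only step that is not completely mechanical is the verification in the $L^p$ case that $\R \oplus E_1 \oplus E_2$ remains within the class of $L^p$-spaces, so that the conclusion is a fibred embedding into a genuine $L^p$-space rather than merely into a direct sum of such. I expect no real obstacle here, since both halves follow at once from the quoted results once the measure-theoretic hypotheses of Roe's theorem are reconciled with the stated assumptions.
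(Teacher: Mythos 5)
Your proposal is correct and follows essentially the same route as the paper: the ``if'' direction via \cref{Haagerup-to-fibred} and \cref{PL^p-to-fibred} (using freeness and linearisability), and the ``only if'' direction via \cref{Roe-embeddable}/\cref{fibred-to-PL^p} (using the invariant probability measure and essential freeness, both of which you correctly reconcile with the stated hypotheses). Your extra check that $\R \oplus E_1 \oplus E_2$ with the $\ell^p$-sum norm is again an $L^p$-space is a detail the paper leaves implicit (it is acknowledged only in the remark following the corollary, about classes of Banach spaces closed under finite direct sums), and it is handled correctly.
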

\begin{proof}
The ``only if'' part follows from \cref{fibred-to-PL^p} by the assumption of the existence of an invariant measure (and of the essential freeness).

The ``if'' part follows from \cref{Haagerup-to-fibred} and \cref{PL^p-to-fibred}, and here the assumption of linearisability (and freeness) is used.
\end{proof}

\begin{longRem} The above \cref{PX-fibred-equivalence} holds also for more general classes of Banach spaces. The tools used in the proofs are Bochner spaces $L^p(Y,\mu; X)$ with coefficients in some Banach space $X$ (where $1\leq p\leq \infty$ can be chosen freely) and their ultrapowers (for \cref{fibred-to-PL^p}), and finite direct sums (for \cref{Haagerup-to-fibred}), so one can consider any classes of Banach spaces closed under these operations.
\end{longRem}

\subsection{Linearisable actions}\label{sec:linearisable}
In \cref{Haagerup-to-fibred} and \cref{PL^p-to-fibred} we assume that our $\Gamma$-space $Y$ admits an equivariant bi-Lipschitz embedding into a Hilbert or $L^p$-space. In this section we are going to verify this condition for certain actions, including the most prominent cases of profinite completions and manifolds. 

We will start from the most general result, which guarantees the existence of \emph{another} metric that is equivalent to the original metric but makes the action linearisable. Next, we will show linearisability with the original metric.

\begin{lem}\label{Kuratowski} Let $p\geq 1$ and let $\Gamma\acts Y$ admit a finite invariant measure $\mu$ of full support and an invariant metric $d$. Then, there exists an equivalent metric $d'_p$ such that the action on $(Y,d'_p)$ is linearisable in an $L^p$-space.
\end{lem}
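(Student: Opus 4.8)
The plan is to construct an $L^p$-analogue of the Kuratowski embedding and declare $d'_p$ to be the metric it induces. Work in the Banach space $E = L^p(Y,\mu)$, and let $\Gamma$ act on $E$ by the Koopman representation $(\gamma\cdot f)(z) = f(\gamma^{-1}z)$. Since the action is by isometries of $(Y,d)$ (the metric $d$ being invariant) and $\mu$ is invariant, a change of variables shows $\|\gamma\cdot f\|_p = \|f\|_p$, so this is a genuine isometric representation on $E$. Define $\Phi\colon Y\to E$ by $\Phi(y) = d(y,\cdot)$; this lands in $L^p$ because $d$ is bounded on the compact space $Y$ and $\mu$ is finite. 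Invariance of $d$ gives $\Phi(\gamma y)(z) = d(\gamma y, z) = d(y,\gamma^{-1}z) = (\gamma\cdot\Phi(y))(z)$, so $\Phi$ is $\Gamma$-equivariant. I then set
\[
d'_p(y,y') \defeq \|\Phi(y)-\Phi(y')\|_p = \left(\int_Y |d(y,z)-d(y',z)|^p \dif\mu(z)\right)^{1/p}.
\]
By construction $\Phi$ is an isometry of $(Y,d'_p)$ onto its image, hence bi-Lipschitz, so once we know $d'_p$ is an equivalent metric, the action on $(Y,d'_p)$ is linearisable in $E$ with witnesses $\Phi$ and the Koopman representation.

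It remains to verify that $d'_p$ is a metric equivalent to $d$. It is automatically a pseudometric, being pulled back from a norm; to see that it separates points, note $d'_p(y,y')=0$ forces $d(y,z)=d(y',z)$ for $\mu$-a.e.\ $z$, hence — since both sides are continuous and $\mu$ has full support — for every $z$, and evaluating at $z=y$ gives $y=y'$. The upper bound is immediate from the reverse triangle inequality $|d(y,z)-d(y',z)|\leq d(y,y')$, which yields $d'_p(y,y')\leq \mu(Y)^{1/p}\,d(y,y')$.

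For the lower bound, write $t = d(y,y')$ and integrate only over the ball $B(y,t/4)$: for $z$ there one has $d(y,z)\leq t/4$ and $d(y',z)\geq t - t/4$, so $|d(y,z)-d(y',z)|\geq t/2$, giving
\[
d'_p(y,y') \geq \tfrac{t}{2}\,\mu\!\left(B(y,t/4)\right)^{1/p}.
\]
The main obstacle is turning this into a \emph{uniform} lower bound. Fix $r>0$; I claim $m(r)\defeq\inf_{y\in Y}\mu(B(y,r))>0$. Indeed, $y\mapsto\mu(B(y,r))$ is lower semicontinuous (if $y_n\to y$ and $z\in B(y,r)$ then $z\in B(y_n,r)$ eventually, so Fatou applies to the indicator functions), it is strictly positive everywhere because $\mu$ has full support, and $Y$ is compact, so its infimum is attained and hence positive. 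Consequently, whenever $d(y,y')\geq\eps$ we get $d'_p(y,y')\geq\frac{\eps}{2}\,m(\eps/4)^{1/p}>0$, which together with the upper bound shows that $d$ and $d'_p$ are uniformly equivalent; on the compact space $Y$ this is the desired equivalence of metrics. I expect this step to be the delicate one, and I note that the construction only yields uniform (equivalently, topological) equivalence rather than bi-Lipschitz equivalence — which is precisely why the sharper statement with the \emph{original} metric must be treated separately afterwards.
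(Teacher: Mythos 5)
Your proposal is correct and is essentially the paper's proof: the same Kuratowski embedding $\Phi(y)=d(y,\cdot)$ into $L^p(Y,\mu)$, the same Koopman representation (isometric since $\mu$ is invariant), the same equivariance computation using invariance of $d$, and the same pulled-back metric $d'_p$. The only difference is in establishing equivalence of $d'_p$ with $d$: the paper deduces it softly from the fact that a continuous injective map on a compact space is a homeomorphism onto its image, whereas you prove the (slightly stronger) uniform equivalence quantitatively via the bound $d'_p(y,y')\geq \tfrac{t}{2}\,\mu(B(y,t/4))^{1/p}$ and the observation that $\inf_{y\in Y}\mu(B(y,r))>0$ by lower semicontinuity, full support, and compactness.
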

\begin{proof}
Let $f\colon Y\to L^p(Y,\mu)$ be the Kuratowski embedding given by $(f(x))(y) = d(x,y)$ for any $x,y \in Y$. Checking the continuity and injectivity is a standard exercise. Because $Y$ is compact, $f$ is a homeomorphism onto its image. By the triangle inequality it also follows that $f$ is a Lipschitz embedding, but its inverse need not be Lipschitz in general. Hence, we define $d'_p$ to be the metric induced from $L^p(Y,\mu)$.

Finally, $f$ is equivariant: for any $x,y \in Y$, we have \[(f(\gamma x))(y) = d(\gamma x, y) = d(x, \gamma^{-1}y) = (f(x))(\gamma^{-1}y) = (\gamma f(x))(y)\,.\qedhere\]
\end{proof}

For example, the above construction does not give a Lipschitz-equivalent metric for a \emph{profinite completion} (more generally, for ultrametric spaces). Recall that given a group $\Gamma$ and a decreasing sequence of its finite index normal subgroups $\Gamma_n$, we can consider the inverse system of finite quotients $G_n=\Gamma/\Gamma_n$ and its limit $\varprojlim G_n$.

The inverse limit (known as the boundary of the coset tree or the profinite completion with respect to $(\Gamma_n)$) can be seen as the set $\{(g_n)\in \prod_n G_n \st q(g_{n+1})=g_n, \, \forall n\}$, where $q$ is the obvious quotient map $G_{n+1}\to G_n$. Consequently, it inherits the product metric $d((g_n),(h_n)) = a_j$, where $j$ is the smallest index such that $g_j\neq h_j$ and $(a_j)_j$ is a sequence of positive numbers decreasing to $0$. We will assume that the sequence $a_j$ decays at least geometrically.

\begin{lem} The action $\Gamma\acts \varprojlim G_n$ is linearisable in an $L^p$-space for any $1\leq p < \infty$.
\end{lem}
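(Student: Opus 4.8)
The plan is to write down an explicit equivariant bi-Lipschitz embedding into the space
\[
E=\Bigl(\bigoplus_{n}\ell^p(G_n)\Bigr)_{\ell^p}=\ell^p\Bigl(\textstyle\bigsqcup_n G_n\Bigr),
\]
which is manifestly an $L^p$-space. First I would record the relevant representation. The group acts on the inverse limit by left multiplication, $\gamma\cdot(g_n)=(\bar\gamma_n g_n)$, where $\bar\gamma_n$ denotes the image of $\gamma$ in $G_n$; this is well defined on $\varprojlim G_n$ because $q$ is a homomorphism, and it is isometric for the ultrametric since left multiplication is a bijection of each $G_n$ and therefore does not change the first index at which two sequences differ. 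Correspondingly, $\Gamma$ acts isometrically on each $\ell^p(G_n)$ by the permutation representation $(\pi_n(\gamma)\xi)(g)=\xi(\bar\gamma_n^{-1}g)$, and I take $\pi=\bigoplus_n\pi_n$ on $E$, which is isometric because each summand is.

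For the embedding I set $f(x)=(a_n\delta_{x_n})_n\in E$, the $n$-th coordinate being the indicator function of $x_n\in G_n$ rescaled by $a_n$. This is well defined since $\|f(x)\|_p^p=\sum_n a_n^p<\infty$ by the geometric decay of $(a_n)$. Equivariance is immediate: $\pi_n(\gamma)\delta_{x_n}=\delta_{\bar\gamma_n x_n}=\delta_{(\gamma x)_n}$, hence $\pi(\gamma)f(x)=f(\gamma x)$. The crux is the norm computation. Writing $j$ for the first index at which $x$ and $x'$ differ, the nestedness $q(x_{n+1})=x_n$ forces $x_n\neq x'_n$ for \emph{every} $n\ge j$ and $x_n=x'_n$ for $n<j$; since $\|\delta_a-\delta_b\|_p^p=2$ whenever $a\neq b$, this gives
\[
\|f(x)-f(x')\|_p^p=\sum_{n\ge j}a_n^p\,\|\delta_{x_n}-\delta_{x'_n}\|_p^p=2\sum_{n\ge j}a_n^p.
\]

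It remains to bound this tail between constant multiples of $a_j^p=d(x,x')^p$, and this is exactly where the geometric decay hypothesis enters: it is the whole reason the Kuratowski embedding of \cref{Kuratowski} need not be bi-Lipschitz here, while the present weighted construction succeeds. The lower bound is free, $2\sum_{n\ge j}a_n^p\ge 2a_j^p$, by keeping only the leading term. For the upper bound, geometric decay supplies $\lambda\in(0,1)$ with $a_n\le\lambda^{n-j}a_j$ for $n\ge j$, whence $2\sum_{n\ge j}a_n^p\le 2a_j^p\sum_{k\ge 0}\lambda^{kp}=\tfrac{2}{1-\lambda^p}\,a_j^p$. Thus $2^{1/p}d(x,x')\le\|f(x)-f(x')\|_p\le\bigl(\tfrac{2}{1-\lambda^p}\bigr)^{1/p}d(x,x')$, so $f$ is an equivariant bi-Lipschitz embedding and the action is linearisable in $E$. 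I expect the only genuine subtlety to be the observation that the set of indices on which two ends of the coset tree differ is a full tail $\{j,j+1,\dots\}$; once that is in hand, everything reduces to the routine geometric-series estimate above.
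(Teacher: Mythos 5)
Your proof is correct and takes essentially the same approach as the paper: the same space $\ell^p\bigl(\bigsqcup_n G_n\bigr)$, the same permutation representation by left translation on each quotient, and the same weighted embedding $x\mapsto (a_n\delta_{x_n})_n$, concluding via the tail-sum estimate $\|f(x)-f(x')\|_p^p\asymp a_j^p$ from geometric decay. The paper's version differs only cosmetically (a normalising factor $2^{-1/p}$, and leaving implicit the tail observation and geometric-series bound that you spell out explicitly).
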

\begin{proof}
Let our $L^p$-space be defined as $L= \ell^p( \bigsqcup_n G_n)$. The action of $\Gamma$ permutes the coordinates by left translation on each quotient $G_n$. We define the embedding by $f((g_n)) = \sum_n 2^{-1/p}\cdot a_n\cdot\delta_{g_n}$. For two sequences $(g_n)$, $(h_n)$ as above we have:
\[\|f((g_n)) - f((h_n))\|_p = \left(\sum_{i=j}^\infty a_i^p\right)^{1/p} \asymp a_j = d((g_n),(h_n))\,,\]
where the approximate equality follows from the assumption of geometric decay.
\end{proof}

Let us now show that the general construction of \cref{Kuratowski} gives a bi-Lipschitz embedding in the most important case of manifolds.

\begin{lem}\label{manifold} If $Y$ is an $n$-dimensional Riemannian manifold, then the metric $d_p'$ from \cref{Kuratowski} is Lipschiz equivalent to the metric $d$ with Lipschitz constants not depending on $p$.
\end{lem}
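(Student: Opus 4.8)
The plan is to package the Kuratowski embedding difference as a single function and reduce the entire statement to one lower bound at $p=1$. First I would set $g_{x,x'}(y)\defeq d(x,y)-d(x',y)$, so that the metric $d'_p$ of \cref{Kuratowski} is $d'_p(x,x')=\|g_{x,x'}\|_{L^p(\mu)}$. I would take $\mu$ to be the normalised Riemannian volume, which is $\Gamma$-invariant because the action preserves the geodesic metric $d$ and is therefore isometric; normalising to a probability measure only rescales $d'_p$ by the bounded factor $\mu(Y)^{1/p}$, so it is harmless for a $p$-uniform equivalence. Since $\mu$ is then a probability measure, the power-mean inequality gives $\|g_{x,x'}\|_1\le\|g_{x,x'}\|_p\le\|g_{x,x'}\|_\infty$ for every $p\in[1,\infty)$, while the triangle inequality yields $|g_{x,x'}(y)|\le d(x,x')$, hence $\|g_{x,x'}\|_\infty\le d(x,x')$ (with equality at $y=x$). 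This already gives the upper Lipschitz bound with constant $1$, uniformly in $p$, so everything reduces to a single estimate at $p=1$: there is $c>0$ with $\|g_{x,x'}\|_1\ge c\,d(x,x')$ for all $x\neq x'$.

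For distances bounded below, this is a soft compactness argument. On the compact set $\{(x,x')\in Y\times Y : d(x,x')\ge r_0\}$ the function $(x,x')\mapsto \|g_{x,x'}\|_1/d(x,x')$ is continuous and strictly positive — positive because $g_{x,x'}$ is a nonzero continuous function (for instance $g_{x,x'}(x)=-d(x,x')$) integrated against a full-support measure — hence bounded below by some $c_1>0$.

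The substance is the small-scale lower bound, which I would extract from the first variation of arc length. Writing $x'=\exp_x(ru)$ with $\|u\|=1$ and $r=d(x,x')$ small, for $y$ outside the cut locus of $x$ one has $d(x',y)=d(x,y)-r\,\langle u,\dot\gamma_{x\to y}(0)\rangle+o(r)$, where $\gamma_{x\to y}$ is the unit-speed minimising geodesic from $x$ to $y$; thus $|g_{x,x'}(y)|=r\,|\langle u,\dot\gamma_{x\to y}(0)\rangle|+o(r)$. Passing to geodesic polar coordinates at $x$ and integrating, $\int_Y |\langle u,\dot\gamma_{x\to y}(0)\rangle|\,d\mu(y)=c(x,u)$, and $(x,u)\mapsto c(x,u)$ is a continuous, strictly positive function on the compact unit tangent bundle, hence bounded below by some $2c_0>0$. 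The points $y$ with $d(x,y)\lesssim r$, together with a shrinking neighbourhood of the cut locus, form a set of $\mu$-measure $O(r^n)$ on which $|g_{x,x'}|\le r$, so they contribute only $O(r^{n+1})=o(r)$; absorbing these and the $o(r)$ error gives $\|g_{x,x'}\|_1\ge c_0\,r$ for all $r\le r_0$. Taking $c=\min(c_0,c_1)$ settles $p=1$, and by the first paragraph the same $c$ works for every $p$.

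The main obstacle is making the first-variation expansion \emph{uniform}: I must control the second-order remainder in $d(\exp_x(ru),y)=d(x,y)-r\langle u,\dot\gamma_{x\to y}(0)\rangle+O(r^2)$ by bounds on the Hessian of $y\mapsto d(\cdot,y)$ that are uniform for $y$ at distance $\ge\delta$ from $x$, and simultaneously dispose of the cut locus of $x$ (where $d(\cdot,y)$ fails to be smooth) by showing that a $\delta$-neighbourhood of it carries $\mu$-measure tending to $0$. Compactness of $Y$ is what makes all of these estimates uniform in $x$, $u$, and $y$; the remaining computation is routine. (For a general full-support invariant $\mu$ in place of the volume the same proof applies, provided $\mu$ has density bounded away from $0$ and $\infty$, which is exactly what keeps $c(x,u)>0$ and validates the $O(r^n)$ measure estimate.)
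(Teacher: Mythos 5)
Your proposal is correct in outline, but it takes a genuinely different route from the paper's proof, and the comparison is instructive. Two differences stand out. First, your reduction to $p=1$: after normalising $\mu$ to a probability measure, monotonicity of $L^p$-norms gives $\|g_{x,x'}\|_1\le\|g_{x,x'}\|_p\le\|g_{x,x'}\|_\infty\le d(x,x')$, so uniformity in $p$ is automatic and only one lower bound is needed. The paper never makes this reduction: it carries $p$ through the whole argument, proving lower bounds of the shape $d_p'(x,y)^p\ge \mu(\phi(W))\cdot\bigl(d(x,y)/10\bigr)^p$, and the $p$-independence of the constants comes from $\mu(\phi(W))^{1/p}\ge\min\bigl(\mu(\phi(W)),1\bigr)$. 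Your reduction is the cleaner way to see the uniformity. Second, the small-scale estimate, where the substance lies: you use the first variation of arc length and integrate $|\langle u,\dot\gamma_{x\to y}(0)\rangle|$ over all of $Y$, which forces you to excise the cut locus of $x$ and to control, uniformly in $x$, both the second-order remainder away from it and the measure of its neighbourhoods. The paper dodges the cut locus entirely by never leaving a normal ball of radius $r$ below the injectivity radius: it invokes a quantitative Riemannian law of cosines (\cite{DDD-CR}*{Lemma 3.2}) to produce a set $W=\{w\in B(0,r): \cos\measuredangle(v,w)\le -1/2,\ \|w\|\ge r/2\}$ of definite measure on which $d(y,\phi(w))-d(x,\phi(w))\ge d(x,y)/10$, and integrates only over $\phi(W)$; its large-distance case is likewise an explicit triangle-inequality estimate over $B(x,d/3)$ rather than your soft compactness argument. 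So your approach buys transparency in $p$ and a conceptually natural local formula, at the price of nontrivial cut-locus technology; the paper's buys elementary, self-contained estimates in which the cut locus never appears. One point in your sketch needs repair: the claim that a shrinking neighbourhood of the cut locus has $\mu$-measure $O(r^n)$ is unjustified (cut loci can be irregular, and no Minkowski-content bound comes for free), but it is also unnecessary. Fix $\delta>0$ independent of $r$ so small that $B(x,\delta)$ together with the $\delta$-neighbourhood of $\mathrm{Cut}(x)$ has $\mu$-measure at most $c_0$ uniformly in $x$ (this uses that $\{(x,y): y\in\mathrm{Cut}(x)\}$ is closed with measure-zero fibres, plus a compactness argument for the uniformity), take the uniform second-derivative bound $C=C(\delta)$ on the complementary compact good set, and absorb the errors for $r\le c_0/(2C)$; with this modification your argument goes through.
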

\begin{proof}
As the distance function is $1$-Lipschitz, we also get Lipschitzness of the Kuratowski embedding: $d_p' \leq \mu(Y)^{1/p}\cdot d$, where $\mu$ is the Riemannian volume on $Y$.

In the other direction, we will separately consider two cases $d(x,y)\leq r/2$ and $d(x,y)>r/2$, where $r>0$ is such that for every point $x$ on the manifold $Y$, the exponential map $\phi\colon T_x Y \to Y$ is a diffeomorphism between $B(0,r)$ in $T_x Y$ and $B(x,r)$ in $Y$. We can also assume that $\phi$ is an isometry when restricted to any line segment containing $0$ and also approximately preserves the measure: $C^{-1}\mu(\phi(A))\leq \lambda(A)\leq C\mu(\phi(A))$, for some $C>1$, where $A$ is any measurable subset of $B(0,r)$ and $\lambda$ is the Lebesgue measure on $T_x Y$.

Using carefully the Taylor expansion, one can prove a version of the law of cosines on a Riemannian manifold \cite{DDD-CR}*{Lemma 3.2}: namely, that for $v,w\in B(0,r)$ we have
\[\Big| d(\phi(v),\phi(w))^2 - \|v\|^2 - \|w\|^2 + 2\langle v,w \rangle \Big| \leq K \cdot \|v\|^2\cdot \|w\|^2,\]
where $K<\infty$ is some constant depending only on $Y$ and $r$. Let now $x,y\in Y$ be such that $d(x,y)\leq r/2$ and let $v\in T_x Y$ satisfy $\phi(v)=y$. Consider now any $w\in B(0,r)\subseteq T_x Y$ such that $\cos\left(\measuredangle(v,w)\right)\leq -1/2$ and assume for simplicity $\|w\|\geq r/2$. We have:
\begin{align*}
d(y,\phi(w))^2 - d(x,\phi(w))^2 &= d(\phi(v),\phi(w))^2 - \|w\|^2 \\
&\geq \|v\|^2 - 2\langle v,w \rangle - K\|v\|^2\|w\|^2 \\
& \geq \|v\| \|w\| - K \|v\|^2 \|w\|^2 \\
&\geq \|v\| (r/2 - K r^4/4) \geq r/4 \cdot \|v\| = r/4 \cdot d(x,y)
\end{align*}
(if we shrink $r$ so that $r^3<K^{-1}$), and consequently
\[d(y,\phi(w)) - d(x,\phi(w)) \geq \frac{r/4 \cdot d(x,y)}{d(y,\phi(w)) + d(x,\phi(w))} \geq \frac{r/4 \cdot d(x,y)}{1.5r+r} = \frac{d(x,y)}{10}\,.\]
Clearly, the set $W\subseteq B(0,r)$ of $w$ as above has a positive measure $\lambda(W)$ (note that this value does not depend on $x$ or $y$), and hence $\mu(\phi(W))\geq C^{-1}\lambda(W)$. Thus we get:
\begin{align*}
d_p'(x,y)^{p} &= \int_Y |d(x,z) - d(y,z)|^p \dif\mu(z) \\
&\geq \int_{\phi(W)} |d(x,z) - d(y,z)|^p \dif\mu(z) \\
&\geq C^{-1}\lambda(W) \cdot \frac{d(x,y)^p}{10^p}\,.
\end{align*}

The remaining estimate is straightforward. If $d(x,y)\eqdef d>r/2$, we obtain:
\[d_p'(x,y) \geq \int_{B(x,d/3)} |d(x,z) - d(y,z)|^p \dif\mu(z) \geq \mu(B(x,d/3)) \cdot \left(\frac{d}{3}\right)^p,\]
and we know that $\mu(B(x,d/3)) \geq \mu(B(x,r/6)) \geq C^{-1} \lambda(B(0,r/6))$.
\end{proof}

The assumptions on linearisability of actions in \cref{Haagerup-to-fibred} and \cref{PL^p-to-fibred} may seem restrictive with our very general definition of a warped cone. However, in practise one is interested in nice spaces and actions, in particular the original formulation of \cref{Roe-embeddable} considered only actions of dense subgroups on ambient compact Lie groups \cite{Roe-cones}. The above \cref{manifold} shows, in particular, that such actions are linearisable.

In fact, for any compact group we have the following argument. (Note that instead of assuming a bi-Lipschitz embedding of $Y$, one can assume a coarse embedding of $\cO Y$ and obtain an equivariant coarse embedding of $\cO Y$, which suffices for \cref{Haagerup-to-fibred} and \cref{PL^p-to-fibred} as a replacement of linearisability.)

\begin{lem} Let $G$ be a compact group with a left-invariant metric, a (right-invariant) Haar measure $\mu$, and a bi-Lipschitz embedding into an $L^p$-space $L$ for some $p\geq 1$. Then its left translation action on itself is linearisable in an $L^p$-space.
\end{lem}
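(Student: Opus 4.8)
The plan is to promote the given (non-equivariant) bi-Lipschitz embedding $\iota\colon G\to L$ to an equivariant one by an induction-type construction, taking as the target Banach space the Bochner space $E = L^p(G,\mu;L)$ of $L$-valued $p$-integrable functions on $G$. The first observation is that since $L$ is an $L^p$-space, say $L=L^p(\Omega,\nu)$, the space $E$ is again an $L^p$-space via the Fubini identification $L^p(G,\mu;L^p(\Omega,\nu))\cong L^p(G\times\Omega,\ \mu\times\nu)$, so the conclusion will indeed be linearisability in an $L^p$-space.

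Next I would equip $E$ with the isometric representation of $G$ given by $(k\cdot f)(g)=f(gk)$. This is a genuine representation of the left translation action, since $(k_1\cdot(k_2\cdot f))(g)=f(gk_1k_2)=((k_1k_2)\cdot f)(g)$, and it is isometric precisely by the right-invariance of the Haar measure $\mu$, because $\int_G\|f(gk)\|_L^p\,d\mu(g)=\int_G\|f(g)\|_L^p\,d\mu(g)$. I would then define the embedding $F\colon G\to E$ by $F(h)(g)=\iota(gh)$. Each $F(h)$ lies in $E$ because $g\mapsto\iota(gh)$ is continuous on the compact group $G$, hence bounded and measurable. Equivariance for the left translation action is then a direct computation: $F(kh)(g)=\iota(gkh)=F(h)(gk)=(k\cdot F(h))(g)$.

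It remains to verify that $F$ is bi-Lipschitz, and this is where the left-invariance of the metric $d$ enters. For the pointwise integrand, bi-Lipschitzness of $\iota$ with constants $c\le C$ gives $c\,d(gh,gh')\le\|\iota(gh)-\iota(gh')\|_L\le C\,d(gh,gh')$, and left-invariance of $d$ yields $d(gh,gh')=d(h,h')$ uniformly in $g$. Integrating over $G$ then gives $\mu(G)^{1/p}c\,d(h,h')\le\|F(h)-F(h')\|_E\le\mu(G)^{1/p}C\,d(h,h')$, so $F$ is bi-Lipschitz. Together with the previous two steps, this exhibits the left translation action $G\acts G$ as linearisable in the $L^p$-space $E$.

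The construction is essentially routine once the correct pairing of invariances is identified, so the point that really requires care is this bookkeeping: the right-invariance of $\mu$ is what makes the representation isometric, whereas it is the left-invariance of $d$ (and the uniform bi-Lipschitz constants it forces on the translated copies $\iota(g\,\cdot\,)$) that makes $F$ bi-Lipschitz. I would also flag the Fubini identification of $E$ as an honest $L^p$-space as the one external fact being invoked; beyond that I do not anticipate any genuine obstacle.
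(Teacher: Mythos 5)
Your proposal is correct and follows essentially the same route as the paper: the paper likewise takes the Bochner space $L^p(G,\mu;L)$ with the Koopman representation by right translations, $(\pi_k v)(g)=v(gk)$, and the embedding $h\mapsto\bigl(g\mapsto c(gh)\bigr)$, which is exactly your $F$. Your write-up merely makes explicit the details the paper leaves to the reader (the Fubini identification of $L^p(G,\mu;L)$ as an $L^p$-space, and the bookkeeping of which invariance yields isometry of the representation versus bi-Lipschitzness of the embedding), so there is nothing to correct.
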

\begin{proof}
Let $c\colon G\to L$ be the embedding. Consider $L^p(G,\mu; L)$ with the Koopman representation $\pi$ of $G$ induced by the action by right translations: $(\pi_g v)(h) = v(hg)$, where $g,h\in G$ and $v\in L^p(G,\mu; L)$. Then, the embedding $\iota\colon G\to L^p(G,\mu; L)$ given by $(\iota(g))(h)=c(hg)$ for $g,h\in G$ is equivariant and bi-Lipschitz.
\end{proof}

\section{From warped cones with property A to amenable actions}\label{sec:A}

Recall the following result of Roe \cite{Roe-cones}.
\begin{theorem}\label{Roe-amenable-actions}
Let $\Gamma\acts Y$ be an amenable action by Lipschitz homeomorphisms of a finitely generated group on a compact manifold or a finite simplicial complex. Then $\cO_\Gamma Y$ has property A.
\end{theorem}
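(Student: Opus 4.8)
The plan is to verify property~A of $\cO_\Gamma Y$ directly, via the $\ell^1$-characterisation: for every $R,\eps>0$ I must produce an $S<\infty$ and a family of unit-norm functions $\{\xi_x\}_{x\in\cO Y}$ with $\supp\xi_x\subseteq B_{d_\Gamma}(x,S)$ and $\|\xi_x-\xi_{x'}\|_1<\eps$ whenever $d_\Gamma(x,x')\le R$. Since bounded subsets never obstruct property~A, I may ignore the compact core $\{ry:r\le t_0\}$ and work only at large radius. The starting point is what amenability of $\Gamma\acts Y$ supplies: a continuous map $b\colon Y\to\Prob(\Gamma)$ which is $\delta$-equivariant for every generator, i.e.\ $\sup_{y}\|b(sy)-s\cdot b(y)\|_1\le\delta$ for $s\in S$; composing along words shows that $\sup_y\|b(\gamma y)-\gamma\cdot b(y)\|_1$ is small, uniformly over the word-ball $\{|\gamma|\le R\}$, once $\delta$ is small enough. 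I fix such a $b$ with $\delta\ll\eps/R$.

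Next I would glue this ``group-direction'' datum to a Euclidean bump in the radial and transverse directions. For $x=ty$ I set
\[
\xi_x \;=\; \mathrm{normalise}\!\left(\sum_{\gamma\in\Gamma} b(y)(\gamma)\,\psi_{\,\gamma\cdot ty}\right),
\]
where $\psi_z$ is a fixed-shape bump (say a tent supported on a small Euclidean ball about $z$, in both the radial coordinate and the cross-section $Y$). The amenable weights $b(y)(\gamma)$ diffuse $\xi_x$ in the dynamical direction, while $\psi$ diffuses it in the remaining cone directions. Truncating the tail of $b(y)$ to a finite set $F$ costs at most $\eps$ in $\ell^1$; since each generator moves a point by warped distance $\le 1$, the support of $\xi_x$ then lies in a warped ball of radius $S$ depending only on $F$ and the bump width, giving the support condition. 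The manifold / finite-complex hypothesis on $Y$ is used here to choose the transverse bumps with uniformly bounded overlap (bounded geometry of the cross-section), so that normalisation and the counting implicit in property~A behave uniformly in $t$.

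The heart of the argument is the slow-variation estimate. Given $d_\Gamma(x,x')\le R$ with $x=ty$ and $x'=t'y'$, the mileage description of the warped metric from the previous section writes a near-optimal path as a bounded word $\gamma$ (the net group displacement, $|\gamma|\le R$) together with small Euclidean adjustments, so that $t'\approx t$ and $y'\approx \gamma y$. Comparing $\xi_x$ with $\xi_{x'}$ then splits into two pieces: the \emph{group weights} compare through $\|b(y')-\gamma\cdot b(y)\|_1$, which is $<\eps$ by the chosen equivariance of $b$; and the \emph{bump centres} $\gamma\cdot ty$ and $ty'$ must be Euclidean-close, so that the corresponding $\psi$'s overlap up to~$\eps$. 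Summing the two contributions yields $\|\xi_x-\xi_{x'}\|_1\le C\eps$, as required.

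I expect the main obstacle to be making this last comparison \emph{uniform in the radius $t$ and the direction $y$}. A warped-$R$ move genuinely mixes an honest group translation with Euclidean motion, and to match the bumps after translating by the net word $\gamma$ I need the finite family of maps $\{z\mapsto\gamma z:|\gamma|\le R\}$ to be equicontinuous — this is exactly where the hypothesis that $\Gamma$ acts by \emph{Lipschitz} homeomorphisms of the compact space $Y$ enters, via a common Lipschitz constant on the cross-section. The tension is that the bump radius must be small compared with the modulus of continuity of these maps (so that $\gamma\cdot ty$ and $ty'$ land within one bump width) yet large enough that a warped-$R$ displacement does not escape a single bump; reconciling these two requirements simultaneously for all large $t$, with constants independent of $y$ by compactness, is the crux of the proof.
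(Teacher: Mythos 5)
First, a caveat about the comparison: the paper itself contains no proof of this theorem --- it is recalled verbatim as a result of Roe, with a citation to \emph{Warped cones and property A}, and the paper's only original contribution here is the remark that Roe's proof uses nothing about $Y$ beyond property A of the unwarped cone $\cO Y$. So your proposal has to be measured against Roe's argument, and it is essentially a reconstruction of it: Roe likewise combines the asymptotically equivariant measures $b\colon Y\to\Prob(\Gamma)$ supplied by amenability with property-A data for the \emph{unwarped} cone (your fixed-shape bumps $\psi_z$ with uniformly bounded overlap are exactly such data; this, and nothing more, is what the manifold/finite-complex hypothesis buys), and he likewise splices the two along the decomposition of a warped-$R$ displacement into a word $\gamma_0$ with $|\gamma_0|\le R$ followed by an unwarped move of size at most $C(R)\approx L^{R}R$, which is precisely where the Lipschitz hypothesis enters. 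Your formula $\xi_{ty}=\sum_{\gamma} b(y)(\gamma)\,\psi_{\gamma\cdot ty}$ is the same convolution-type construction.

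The one place where you stop short is the final paragraph, and the ``tension'' you flag there as the unresolved crux is not real; it dissolves once the quantifiers are ordered correctly. Nothing in the argument requires the bumps to be \emph{small}. The only genuine smallness requirement concerns the cross-section: one needs $d_Y(y',\gamma_0 y)\le C(R)/t$ to be small enough that the uniform continuity of $b$ gives $\|b(y')-b(\gamma_0 y)\|_1<\eps$ (note that equivariance alone does not compare $b(y')$ with $b(\gamma_0 y)$; you need this continuity step), and this holds automatically for $t$ large, bounded $t$ having been discarded at the outset. For the bumps one only needs them \emph{large}: after fixing the equivariance quality $\delta\le\eps/R$ and a uniform truncation set $F$ (available because $b$ is norm-continuous on the compact $Y$), the displacement between matched centres is
\[
d\bigl(\gamma\gamma_0\cdot ty,\ \gamma\cdot t'y'\bigr)\ \le\ L^{|\gamma|}\,d(\gamma_0\cdot ty,\,t'y')\ \le\ L^{M}C(R)\ =:\ D,
\qquad M=\max_{\gamma\in F}|\gamma|,
\]
and crucially $D$ depends only on $R$, $\eps$, and the common Lipschitz constant $L$ --- not on the bump width $W$ --- so one may choose $W\gg D/\eps$ \emph{afterwards}. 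Property A places no upper bound on the support radius $S(R,\eps)$, so nothing fights back against taking $W$ huge. Finally, ``equicontinuous'' is not the right hypothesis to invoke: a mere modulus of continuity on $Y$ degenerates after dilation by $t$, and it is exactly the scale-invariance of the Lipschitz condition (an $L$-Lipschitz map of $Y$ is $L$-Lipschitz on $tY$ for every $t$) that makes $C(R)$ and $D$ uniform in $t$. With these adjustments your outline closes into a complete proof, matching the cited one.
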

In fact, it follows from the proof that instead of assuming that $Y$ is a manifold or a simplicial complex, it is enough to assume that the infinite cone $\cO Y$ has property A (which holds for all examples considered in the literature so far, in particular for profinite completions).

In this section, we want to obtain the converse implication.

\begin{theorem}\label{A-to-amenable}
Let $\Gamma\acts Y$ be a free action of a finitely generated group on a compact metric space and assume that $\cO_\Gamma Y$ has property A. Then the action $\Gamma\acts Y$ is amenable.
\end{theorem}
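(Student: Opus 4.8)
The plan is to exploit the structural picture developed in the preceding lemmas, namely the identification of the warped metric $d_\Gamma$ on $\cO Y$ with the quotient metric of the $\ell^1$-type metric $d^1$ on $\Gamma \times \cO Y$, together with the asymptotic faithfulness of the quotient map $q\colon (\Gamma \times \cO Y, d^1) \to \cO_\Gamma Y$ guaranteed by \cref{faithful-lemma} under the freeness hypothesis. The target is amenability of the action $\Gamma \acts Y$, which I would phrase via the existence of an approximately invariant family of probability measures (a Reiter-type or Følner-type condition): for every finite $F \subseteq \Gamma$ and every $\eps > 0$, a weak-$*$ measurable assignment $y \mapsto \mu_y$ of probability measures on $\Gamma$ such that $\|s_* \mu_y - \mu_{sy}\|_1 < \eps$ for all $s \in F$, with appropriate continuity. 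Property A of $\cO_\Gamma Y$ will be the engine producing these measures.

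First I would recall the combinatorial characterisation of property A: for the bounded-geometry-like space $\cO_\Gamma Y$ (or each level), property A furnishes, for every $R > 0$ and $\eps > 0$, a family $(\xi_x)_{x}$ of finitely-supported unit vectors in $\ell^1$ (or normalised probability measures $\xi_x$ on the space) with uniformly bounded support diameter $S$ and with $\|\xi_x - \xi_{x'}\|_1 < \eps$ whenever $d_\Gamma(x,x') \le R$. Second, I would pull these back through the quotient map. The key mechanism is that, by asymptotic faithfulness, on the region $\Gamma \times [R_N, \infty) \times Y$ the map $q$ is an isometry on $N$-balls; thus the $\ell^1$-geometry of the warped cone locally coincides with that of the product $\Gamma \times \cO Y$, where the $\Gamma$-factor carries the word metric. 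Working at a fixed large radius level $r \ge R_N$ and over a point $(e, r, y)$, the local isometry lets me transport the property-A vector $\xi$ living near $q(e,r,y) = (r,y)$ to a finitely-supported function on the fibre over $y$, and the product structure of $d^1$ means its support spreads in the $\Gamma$-direction according to the word metric. Normalising and marginalising onto the $\Gamma$-coordinate produces a candidate probability measure $\mu_y$ on $\Gamma$.

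Third, I would verify the approximate invariance. For a generator $s \in S$, the points $(r,y)$ and $(r, sy)$ are at warped distance at most $1$ (via the shortcut $d_\Gamma(x, sx) \le 1$), so the property-A estimate gives $\|\xi_{(r,y)} - \xi_{(r,sy)}\|_1$ small; translating through the local isometry and comparing with the $\Gamma$-action $\gamma \cdot (\eta, x) = (\eta \gamma^{-1}, \gamma x)$, this smallness converts into $\|s_* \mu_y - \mu_{sy}\|_1$ small, which is exactly the Reiter condition along generators. Iterating over the finite set $F$ (expressed in terms of $S$) and letting $R, \eps$ vary recovers full approximate invariance. The uniform support bound $S$ on the property-A vectors, combined with the freeness-driven injectivity on $N$-balls for $N \gg S$, is what ensures the marginal measures $\mu_y$ are genuinely supported on a bounded symmetric part of $\Gamma$ and vary measurably in $y$.

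The main obstacle I anticipate is \emph{not} the mere transport of vectors but the passage from a property-A family indexed by points of the cone at a \emph{fixed scale level} to a single family of measures on $\Gamma$ that is approximately invariant \emph{uniformly over all of $Y$} and measurable in the dynamical sense required by amenability of the action. Concretely, the property-A vectors a priori depend on the base point $x=(r,y)$ in a way that need not be continuous in $y$, and their supports live in $\cO Y$ rather than canonically in $\Gamma$; extracting a genuine $\Gamma$-valued marginal requires the local trivialisation to identify the fibre coherently, and matching supports across nearby $y$ forces one to take $r$ large (so that $\eps(N)$-balls in $Y$ collapse under $q$ only for free orbits). I would handle this by fixing $N$ first, choosing $R_N$ and then $\eps$ from \cref{faithful-lemma}, and only afterwards invoking property A at radius $R = N$; a diagonal/weak-$*$ compactness argument over the sequence $N \to \infty$ (using compactness of $Y$ and of the space of probability measures on the bounded symmetric sets in $\Gamma$) then yields the limiting approximately invariant system witnessing amenability of $\Gamma \acts Y$.
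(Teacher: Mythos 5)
Your overall route is the paper's: use the quantitative form of \cref{faithful-lemma} to identify, at a fixed sufficiently large level $r$, balls in $(\Gamma\times\cO Y, d^1)$ with balls in $\cO_\Gamma Y$, lift the property-A measures through this local isometry, read off a probability measure on $\Gamma$ as the $\Gamma$-marginal, and convert the property-A variation estimate for the pair $ry,\, rsy$ (at warped distance at most $1$) into the Reiter-type estimate $\|C_n^y s^{-1} - C_n^{sy}\|_1 \leq 1/n$. This is exactly how the paper constructs $C_n^y(\gamma) = A_n(ry)\bigl( r\gamma B_d(y,\delta) \bigr)$. However, there is a genuine gap at the point you yourself flag as the main obstacle, and your proposed remedy does not close it.

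The gap is continuity of $y \mapsto C_n^y \in \Prob(\Gamma)$, which is part of the definition of an amenable action and cannot be waived. Your fix {\textemdash} a diagonal/weak-$*$ compactness argument over $N\to\infty$ {\textemdash} does not produce it: pointwise weak-$*$ limits of measurably varying measures need not depend continuously on $y$, and passing to a limit also forfeits the norm control that makes the family asymptotically equivariant. Moreover, no limiting argument is needed at all: the family built at stage $n$ is already $1/n$-equivariant uniformly over $Y$, so the sequence $(C_n)_n$ itself is the witness of amenability; the only thing a limit could add is what it destroys. What is actually required is (a) the standing observation (after \cref{def:property-A}) that the $A_n$ may be assumed continuous by a partition-of-unity argument, and (b) an argument that the marginal masses do not jump as the window $B_d(y,\delta)$ slides with $y$ {\textemdash} a priori mass of $A_n(ry)$ could sit on the boundary of the window and enter or leave under small perturbations of $y$. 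The paper handles (b) with the ``radius enlarging'' identities \eqref{radius enlarging}: the sets $r\gamma B_d(y,\eps)$ for $\gamma\in B(e,m+2)$ are pairwise disjoint by injectivity, and the windows of radius $\delta/2$ already capture total mass $1$, so each annulus $r\gamma\bigl(B_d(y,\eps)\setminus B_d(y,\delta/2)\bigr)$ carries zero mass; for $y'$ near $y$ the value $C_n^{y'}(\gamma)$ is then sandwiched between masses of windows around $y$ of radii $\delta/2$ and $\eps$, and continuity follows from continuity of $A_n$. The same disjointness-plus-full-mass mechanism underlies the change of variables $C_n^{sy}(\gamma) = A_n(rsy)\bigl(r\gamma s B_d(y,\eps)\bigr)$, which is what makes your step ``translating through the local isometry converts the smallness into $\|s_*\mu_y - \mu_{sy}\|_1$ small'' legitimate: without it one is comparing masses of different windows ($B_d(sy,\delta)$ versus $sB_d(y,\delta)$).

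A second, minor and fixable, issue is the order of quantifiers. You fix $N$, extract the faithfulness data $R_N, \eps$, and only then invoke property A ``at radius $R=N$''. But the injectivity/isometry scale must dominate the \emph{support bound} of the property-A measures {\textemdash} your own requirement $N \gg S$ {\textemdash} and that bound $S = N(n)$ is only known once property A has been invoked. The correct order (the paper's) is: fix $n$, read off $m = N(n)$ from the property-A data, and only then choose $\eps_{m+2}$, $\delta$, and $r = r_{m+2,\delta/2}$ from \cref{faithful-lemma}.
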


Note that one needs to assume freeness of the action. In \cref{non-amenable-action} and~\labelcref{non-a-T-menable-action} we construct warped cones with property A over non-amenable and even non-a-T-menable actions.

Let us recall the definitions.

\begin{defi}\label{def:property-A} A metric space $X$ has \emph{property A}, if there is a sequence of maps $A_n\colon X \to \Prob(X)$ and a function $N\colon \N\to \N$ such that $\supp A_n(x) \subseteq B(x,N(n))$ and for $d(x,x')\leq n$ we have $\|A_n(x)-A_n(x')\|_1 \leq 1/n$.
\end{defi}

Property A was introduced by Yu \cite{Yu} as a property that implies coarse embeddability into a Hilbert space but is stronger and easier to work with, and the above characterisation comes from \cite{HigsonRoe}. While coarse embeddability is a non-equivariant version of a-T-menability (the Haagerup property) of groups, property A should be thought of as a non-equivariant version of amenability. Note that maps $A_n$ do not have to be continuous, but the continuity can always be imposed by a partition-of-unity argument.

\begin{defi}\label{def:amenable-action} An action $\Gamma \acts Y$ is amenable if there is a sequence of continuous maps $Y\owns y\mapsto C_n^y \in \Prob(\Gamma)$, which are asymptotically equivariant:
\[\lim_{n\to \infty} \sup_{y\in Y} \|C_n^{y}\gamma^{-1} - C_n^{\gamma y}\|_1 = 0\,,\]
for all $\gamma\in \Gamma$, where the right action on $\Prob(\Gamma)$ is defined by $(\mu \gamma) (A) = \mu(A \gamma^{-1})$ for $\mu\in\Prob(\Gamma)$ and $A\subseteq \Gamma$.
\end{defi}

Note that a group has property A if and only if it admits an amenable action on a compact Hausdorff space,
and it is amenable if and only if all of its actions are amenable.

\begin{proof}[Proof of \cref{A-to-amenable}]
Assume that the warped cone $\cO_\Gamma Y$ has property A, i.e.\ there is a sequence of \emph{continuous} (see the discussion below \cref{def:property-A}) maps $A_n\colon\cO_\Gamma Y \to \Prob(\cO_\Gamma Y)$ and a function $N\colon \N\to \N$ as in \cref{def:property-A}. Without changing the notation, we modify $A_n$ to satisfy the following conditions instead: 
\begin{enumerate}
	\item\label{A-to-amenable-proof-support} for any $ry \in \cO_\Gamma Y$, the support of $A_n(ry)$ is contained in $r Y \cap B_{d_\Gamma}(ry,N(n))$, 
	\item\label{A-to-amenable-proof-flat} for any $r\in \R_+$ and $y,y' \in Y$ with $d_\Gamma(ry, ry') \leq n$, we have $\|A_n(ry)-A_n(ry')\|_1 \leq 1/n$, and
	\item\label{A-to-amenable-proof-continuous} the map $Y \ni y \mapsto A_n(ry)$ is continuous. 
\end{enumerate}
In other words, the new $A_n$ splits into a family $\left\{ A_n |_{rY} \colon rY \to \Prob(rY) \right\}_{r \in \R_+}$ of continuous maps simultaneously satisfying the conditions in \cref{def:property-A}.
This can be arranged by considering the retractions $\pi_r\colon \cO_\Gamma Y \to rY$ given by $\pi_r(r'y) = ry$ for $y \in Y$ and $r, r' \in \R_+$, replacing each $A_n(ry)$ by its push-forward under $\pi_r$. Notice that since we are using the warped metric associated to the cone metric defined in the second paragraph of \cref{sec:defn}, we have ${d_\Gamma}(\pi_r(r'y'), \pi_r(ry)) = {d_\Gamma}(ry', ry) \leq {d_\Gamma}(r'y', ry)$ for any $r,r'\in (0,\infty)$ and $y,y'\in Y$, that is, $\pi_r\left( B_{d_\Gamma}(ry,N(n)) \right) \subset r Y \cap B_{d_\Gamma}(ry,N(n))$ for any $r\in (0,\infty)$ and $y\in Y$, which establishes the first property above.

We are going to produce a sequence of continuous maps $(y \mapsto C_n^y)$ as in \cref{def:amenable-action}. 
To this end, we fix $n \in \N$ and let $m = N(n)$. 
Since the action $\Gamma \acts Y$ is free, by \cref{lem:tower}, there exists $\eps>0$ such that 
\begin{enumerate}\setcounter{enumi}{3}
	\item\label{A-to-amenable-proof-disjoint} for any $y \in Y$, the balls $B_d(\gamma y,\eps)$, for $\gamma \in B(e, m+2)$, are disjoint, 
\end{enumerate}
where $d$ is the original unwarped metric on $Y$. Next, by \cref{cor:ball-in-tower}, there is $r > 0$ such that for any $y \in Y$, the ball $B_{d_\Gamma}(r y, m)$ in the warped cone $\cO_\Gamma Y$ is contained in 
\[
	\left[r-\tfrac{m}{\operatorname{diam}(Y)},\; r+ \tfrac{m}{\operatorname{diam}(Y)} \right] \times  \bigcup_{\gamma \in B(e, m)} B_d(\gamma y, \eps/2) \; .
\]
Combining this with the first property of $A_n$ detailed at the beginning of the proof and writing $r B_d(\gamma y,\eps/2)$ as a short form for $\{ r \} \times B_d(\gamma y,\eps/2)$, we have
\begin{enumerate}\setcounter{enumi}{4}
	\item\label{A-to-amenable-proof-balls} the support of $A_n(ry)$ is contained in $\bigcup_{\gamma \in B(e, m)} r B_d(\gamma y,\eps/2)$.
\end{enumerate}

We define, for any $y \in Y$ and $\gamma \in \Gamma$, 
\[
	C_n^y(\{\gamma\}) = 
	\begin{cases}
		A_n(ry) \left( r B_d(\gamma y,\eps) \right) & \textrm{if }\gamma\in B(e,m+2)\,, \\
		0 & \textrm{if }\gamma \in \Gamma \setminus B(e,m+2)	\, .
	\end{cases}
\]

We now check that it satisfies the desired properties. In the following, the numbers above the equality and inclusion signs indicate which of the aforementioned properties we are using.  
\begin{enumerate}[itemsep=5pt]
	\setcounter{enumi}{5}
	\item For any $y \in Y$, we have 
	\begin{align*}
		C_n^y(\Gamma) = \sum_{\gamma \in B(e, m+2) } C_n^y(\{\gamma\}) & = \sum_{\gamma \in B(e, m+2)} A_n(ry)( r B_d(\gamma y,\eps) ) \\
		& \overset{\eqref{A-to-amenable-proof-disjoint}}{=} A_n(ry) \left( \bigsqcup_{\gamma \in B(e, m+2)}   r B_d(\gamma y,\eps) \right)  \overset{\eqref{A-to-amenable-proof-balls}}{=} 1 \, ,
	\end{align*}
	that is, $C_n^y$ is a probability measure. 
	\item For any $\gamma\in B(e, m+2)$ and any $y \in Y$, we have 
	\begin{align*}
		r B_d(\gamma y,\eps) \cap \operatorname{supp}(A_n(ry))  & \overset{\eqref{A-to-amenable-proof-balls}}{\subseteq} r B_d(\gamma y,\eps)  \cap \bigcup_{\eta \in B(e, m)} r B_d(\eta y,\eps/2)  \\
		& \overset{\eqref{A-to-amenable-proof-disjoint}}{\subseteq} r B_d(\gamma y,\eps/2) \, ,
	\end{align*}
	and thus for any set $D$ such that $r B_d(\gamma y,\eps/2) \subseteq D \subseteq r B_d(\gamma y,\eps)$, we have 
	\[
		C_n^y(\{\gamma\}) = A_n(ry) \left( r B_d(\gamma y,\eps) \right) = A_n(ry) \left( D \right)   \, .
	\] 
	If $d(y, y') < \eps/4$, the ball $D = r B_d(\gamma y, 3\eps/4)$ 
	satisfies both $r B_d(\gamma y,\eps/2) \subseteq D \subseteq r B_d(\gamma y,\eps)$ and $r B_d(\gamma y',\eps/2) \subseteq D \subseteq r B_d(\gamma y',\eps)$. Thus
	\[
		| C_n^{y'}(\{\gamma\}) - C_n^{y}(\{\gamma\}) | = \left | A_n(ry') \left( D \right) - A_n(ry) \left( D \right) \right| \leq \| A_n(ry') - A_n(ry) \|_1 \, ,
	\] 
	which shows that the map $y \mapsto C_n^y$ is continuous. 
	\item\label{A-to-amenable-proof-m} For any $\gamma\in B(e,m+2) \setminus B(e,m)$ and any $y \in Y$, we have
	\[
	r B_d(\gamma y,\eps) \cap \operatorname{supp}(A_n(ry))  \overset{\eqref{A-to-amenable-proof-balls}}{\subseteq} r B_d(\gamma y,\eps)  \cap \bigsqcup_{\eta \in B(e, m)} r B_d(\eta y,\eps/2)  \overset{\eqref{A-to-amenable-proof-disjoint}}{=} \varnothing \, ,
	\]
	which implies $C_n^y(\{\gamma\})=0$. 
	\item Since for $s\in S$ one has $B(e,m) \subseteq B(e,m+1)s \cap B(e,m+1)$ and also $B(e,m+1)s \cup B(e,m+1) \subseteq B(e,m+2)$, we check that $C_n$ is $\left(S, \frac{1}{n} \right)$-equivariant in the sense that for any generator $s \in S$ and any $y \in Y$, we have
	\begin{align*}
	\left\| C_n^y s^{-1} - C_n^{sy} \right\|_1 
	&\overset{\eqref{A-to-amenable-proof-m}}{=} \sum_{\gamma\in B(e,m+1)} \left|C_n^y(\gamma s) - C_n^{sy}(\gamma) \right| \\
	&= \sum_{\gamma\in B(e,m+1)} \left| A_n(ry)( r B_d(\gamma s y,\eps) ) - A_n(rsy)( r B_d(\gamma s y,\eps) )\right|\\
	&\overset{\eqref{A-to-amenable-proof-disjoint}}{\leq} \|A_n(ry) - A_n(rsy) \|_1 \overset{\eqref{A-to-amenable-proof-flat}}{\leq} 1/n\,. 
	\end{align*}
\end{enumerate}
Therefore the sequence of continuous maps $(y \mapsto C_n^y)$ satisfies the requirements in \cref{def:amenable-action}.
\end{proof}

Hence, combining \cref{Roe-amenable-actions} and \cref{A-to-amenable}, we obtain the following.

\begin{cor}\label{A-equivalence} Let $\Gamma\acts Y$ be a free action by Lipschitz homeomorphisms and assume that the infinite cone $\cO Y$ has property A (e.g.\ $Y$ is a manifold, simplicial complex, or profinite completion). Then, the action is amenable if and only if the warped cone $\cO_\Gamma Y$ has property A.
\end{cor}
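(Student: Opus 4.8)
The plan is to assemble the two principal results of the two preceding sections, one for each direction of the stated equivalence, and to verify that the corollary's hypotheses supply exactly what each input theorem requires.

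For the implication ``the action is amenable $\Rightarrow$ $\cO_\Gamma Y$ has property A'', I would invoke \cref{Roe-amenable-actions}. As literally stated, that theorem assumes $Y$ to be a compact manifold or a finite simplicial complex, but the remark immediately following it records that its proof uses only that the infinite cone $\cO Y$ has property A. This is precisely one of our standing hypotheses, and the examples listed in the corollary (manifold, simplicial complex, profinite completion) are exactly the cases in which $\cO Y$ is known to have property A. The remaining assumptions match verbatim: the action is amenable and by Lipschitz homeomorphisms. Hence this direction is immediate from the strengthened form of Roe's theorem.

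For the converse ``$\cO_\Gamma Y$ has property A $\Rightarrow$ the action is amenable'', I would invoke \cref{A-to-amenable}. Its hypotheses are only that the action be free and that $\cO_\Gamma Y$ have property A, both of which we assume; in particular this direction needs \emph{less} than is available, since \cref{A-to-amenable} requires neither Lipschitzness of the action nor property A of $\cO Y$. The notion of amenable action produced here is the same one consumed by \cref{Roe-amenable-actions}, so the two implications genuinely compose.

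There is no real obstacle here beyond bookkeeping: the only point to watch is that the two input theorems carry slightly different hypothesis sets, so one must confirm that the corollary's combined hypotheses dominate both. Freeness is used only in the converse direction (through the asymptotic faithfulness of the quotient map established in \cref{faithful-lemma}), whereas Lipschitzness and property A of $\cO Y$ enter only in the forward direction via \cref{Roe-amenable-actions}; since these two hypothesis sets do not interact, no additional argument is needed and the biconditional follows at once.
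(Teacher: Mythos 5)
Your proposal is correct and is exactly the paper's argument: the paper derives \cref{A-equivalence} by combining \cref{Roe-amenable-actions} (in the strengthened form noted right after its statement, requiring only that $\cO Y$ have property A) for the forward direction with \cref{A-to-amenable} (using freeness) for the converse. Your bookkeeping of which hypotheses feed into which direction matches the paper's intent precisely.
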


\section{From embeddable warped cones to a-T-menable and \texorpdfstring{$PL^p$}{PL\^{}p} groups}\label{sec:embeddable}

In this section, we discuss generalisations of \cref{Roe3} from the introduction. The main result will be \cref{fibred-to-PL^p}, but let us start with the following analogue of \cref{A-to-amenable} for warped cones that do not necessarily have property A but admit a coarse embedding into the Hilbert space.

\begin{prop}\label{coarse-to-a-T-menable}
Let $\Gamma\acts Y$ be a free action of a finitely generated group on a compact metric space. If the warped cone $\cO_\Gamma Y$ admits a coarse embedding into a Hilbert space, then the action is a-T-menable.
\end{prop}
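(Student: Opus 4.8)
The plan is to follow the strategy of \cref{A-to-amenable}, replacing the property-A data by the conditionally negative definite kernel associated to the coarse embedding. Fix a coarse embedding $F\colon \cO_\Gamma Y \to H$ with control functions $\rho_\pm$, so that $k(x,x')\defeq \|F(x)-F(x')\|^2$ is a conditionally negative definite kernel on $\cO_\Gamma Y$ satisfying $\rho_-(d_\Gamma(x,x'))^2 \le k(x,x') \le \rho_+(d_\Gamma(x,x'))^2$. As in \cref{A-to-amenable}, I would work one radial slice $rY$ at a time and transport the kernel to the group $\Gamma$ using freeness (the orbit map $\gamma \mapsto r\gamma y$ is injective) together with the asymptotic faithfulness established in \cref{faithful-lemma}.

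The key point is to build a genuine conditionally negative definite function on the transformation groupoid $\Gamma\ltimes Y$, not merely a kernel. To this end, for a fixed radius $r$ I would set $b_r(\gamma, y)\defeq F(r\gamma y) - F(ry) \in H$ and $\psi_r(\gamma,y)\defeq \|b_r(\gamma,y)\|^2$. A direct telescoping computation shows that $b_r$ satisfies the groupoid cocycle identity (with the trivial representation on the constant field $H$), so that the fibrewise kernel reduces to $\psi_r(\gamma_i^{-1}\gamma_j,\gamma_j^{-1}y)=\|F(r\gamma_i^{-1}y)-F(r\gamma_j^{-1}y)\|^2$ and is therefore conditionally negative definite along each fibre; moreover $\psi_r(e,y)=0$ and the symmetry $\psi_r(\gamma^{-1},\gamma y)=\psi_r(\gamma,y)$ hold identically. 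This telescoping trick is exactly what circumvents the obstacle that the warped-cone action on a fixed slice is not isometric, so that the naive orbit kernel $k(r\gamma y, r\gamma' y)$ fails to be translation invariant. The two-sided control then reads $\psi_r(\gamma,y)\le \rho_+(|\gamma|)^2$ for all $\gamma$ (since $d_\Gamma(ry,r\gamma y)\le |\gamma|$ via the shortcuts), while \cref{faithful-lemma} provides, for each $m$, a radius $r_m$ with $d_\Gamma(r_m y, r_m\gamma y)=|\gamma|$ for every $y\in Y$ and every $\gamma$ with $|\gamma|\le m$, giving the matching lower bound $\psi_{r_m}(\gamma,y)\ge \rho_-(|\gamma|)^2$ on that ball.

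The main obstacle is that for any single radius $r$ the function $\psi_r$ is bounded, because each slice $rY$ has bounded diameter, so no single slice can be proper. I would resolve this by letting $r_m\to\infty$ and forming a weighted sum $\Psi\defeq \sum_m t_m\,\psi_{r_m}$ with positive weights $t_m$. As a positive combination of conditionally negative definite functions, $\Psi$ is again conditionally negative definite, continuous in $y$ (after arranging $F$ to be continuous by a partition-of-unity argument, as for property A), symmetric, and vanishing at $\gamma=e$. Choosing $t_m$ with $\sum_m t_m<\infty$ keeps $\Psi(\gamma,y)\le \rho_+(|\gamma|)^2\sum_m t_m$ finite, while the lower bound gives $\Psi(\gamma,y)\ge \rho_-(|\gamma|)^2\sum_{m\ge |\gamma|} t_m$ uniformly in $y$; since $\rho_-$ is unbounded one can tune the tails $\sum_{m\ge |\gamma|} t_m$ to decay slowly enough (comparable to $\rho_-(|\gamma|)^{-1}$) that this lower bound still tends to infinity as $|\gamma|\to\infty$. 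The resulting $\Psi$ is then a proper, continuous, fibrewise conditionally negative definite function on $\Gamma\ltimes Y$, which is precisely a witness that the action $\Gamma\acts Y$ is a-T-menable. If one prefers the sequential formulation that parallels the definitions of amenable actions and of property A, the family $\{\psi_{r_m}\}_m$ with its uniform two-sided bounds already provides the witness and the summation step can be omitted.
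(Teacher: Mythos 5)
Your proposal is correct and follows essentially the same route as the paper's proof: both pull back the conditionally negative definite kernel $\|F(x)-F(x')\|^2$ to the orbits at a sequence of radii $r_m$, use freeness to guarantee $d_\Gamma(r_m y, r_m\gamma y)=|\gamma|$ uniformly in $y$ on balls of radius $m$, and then form a convergent weighted sum of the slice kernels with weights tuned so that properness survives. The only cosmetic differences are that you phrase fibrewise negative definiteness via a telescoping cocycle (the paper just observes that the orbit kernel is a pullback of a negative-type kernel), you source the uniform lower bound from \cref{faithful-lemma} rather than from the cited remark of \emph{Warped cones over profinite completions} plus compactness, and you tune general weights $t_m$ where the paper sparsifies radii along a sequence $(a_n)$ with dyadic weights.
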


The necessity of the freeness assumption follows from \cref{non-a-T-menable-action}. Note also that our conclusion about the action cannot be strengthened to a conclusion about the group {\textemdash} the warped cone over an amenable action has property A, hence admits a coarse embedding into a Hilbert space, and there are plenty of such actions by non-a-T-menable groups. However, if an action is both a-T-menable and admits an invariant probability measure, then the group must be a-T-menable, so in this case we retrieve in particular the original version of \cref{Roe-embeddable} from \cite{Roe-cones}.

Let us recall the definition of a-T-menability of an action.

\begin{defi} An action $\Gamma\acts Y$ is a-T-menable if there exists a continuous function $h\colon\Gamma\times Y\to [0,\infty)$ which is:
\begin{itemize}
\item symmetric, that is, $h(\gamma, y) = h(\gamma^{-1},\gamma y)$;
\item normalised, that is, $h(e, y) = 0$;
\item proper; and
\item (conditionally) negative-definite, that is, for any $y\in Y$ and any vector $(\lambda_\gamma)\in \R^\Gamma$ with finite support and zero sum, the following holds
\begin{equation}\label{ineq:negative-definite}\sum_{\gamma, \gamma'\in\Gamma} \lambda_\gamma \lambda_{\gamma'} h(\gamma'\gamma^{-1},\, \gamma y) \leq 0\,.
\end{equation}
\end{itemize}
The reader may find these formulas more natural by regarding the pair $(\gamma, y)$ as the triple $(\gamma, y, \gamma y)$. 
\end{defi}

\begin{proof}[Proof of \cref{coarse-to-a-T-menable}]
Let $f$ be the coarse embedding and let $k\colon \cO_\Gamma Y \times \cO_\Gamma Y \to \R_+$ defined as $k(x,x')=\|f(x)-f(x')\|^2$ be the proper negative-type kernel satisfying \[\rho_-\circ d_\Gamma(x,x') \leq k(x,x') \leq \rho_+\circ d_\Gamma(x,x')\]
for some non-decreasing unbounded functions $\rho_-,\rho_+\colon \R_+\to\R_+$. By standard partition-of-unity arguments we can assume that $f$ and hence also $k$ are continuous.

If $y,y'\in Y$ are in the same orbit, and $\gamma$ is the shortest element of $\Gamma$ such that $\gamma y = y'$, then for $r\in\R_+$ large enough we have $d_\Gamma(ry,ry')=|\gamma|$ \cite{Sawicki-completions}*{Remark 3.1}. Hence, since the action is free, for any $y$ there is $r$ so large that $d_\Gamma(ry,r\gamma y)=|\gamma|$, and by the compactness of $Y$ there exists $r_\gamma$ valid for all $y$.

Let $(a_n)$ be an increasing sequence of integers such that $\rho_-(a_n)\geq 2^n \cdot n$ and let $r_n = \max \{r_\gamma \st \gamma \in B(e,a_{n+1}) \}$. We define
\[h(\gamma,y) = \sum_{n=1}^\infty \frac{1}{2^n} \cdot k(r_n y, r_n \gamma y)\,.\]
Since the kernel $k$ is negative-definite, the same is true for $h$, in the sense that it satisfies inequality \eqref{ineq:negative-definite}. We have $0\leq k(r_n y, r_n \gamma y)\leq \rho_+\circ d_\Gamma(r_n y, r_n \gamma y) \leq \rho_+(|\gamma|)$, so the series above indeed converges, and the map $y \mapsto h(\gamma, y)$ is bounded by $\rho_+(|\gamma|)$ and is continuous as a uniform limit of continuous functions. Finally, for any $n$, if $\gamma$ is long enough, that is, $a_n \leq |\gamma|$, and, say, $a_m\leq|\gamma|\leq a_{m+1}$ for some $m\geq n$, we obtain
\[h(y,\gamma)\geq \frac{k(r_m y, r_m \gamma y)}{2^m} \geq\frac{\rho_-\circ d_\Gamma(r_m y, r_m \gamma y)}{2^m} = \frac{\rho_-(|\gamma|)}{2^m} \geq \frac{\rho_-(a_m)}{2^m} \geq m \geq n\,,\]
so function $h$ is proper.
\end{proof}

We can give a similar proof in the presence of an invariant measure, but using affine actions rather than negative-type kernels. This way, we obtain a version of \cref{Roe-embeddable} that applies to any $L^p$-spaces.

\begin{prop}\label{coarse-to-PL^p}
Assume that the action $\Gamma\acts Y$ admits an invariant probability measure $\mu$ and is essentially free. Then, if the warped cone $\cO_\Gamma Y$ admits a coarse embedding into an $L^p$-space, then $\Gamma$ has property $PL^p$.
\end{prop}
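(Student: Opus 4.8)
The plan is to build directly a proper affine isometric action of $\Gamma$ on an $L^p$-space, that is, a linear isometric representation $\pi$ together with a $1$-cocycle $b\colon\Gamma\to E$ whose orbit map at $0$ is a coarse embedding, i.e.\ $\|b(\gamma)\|\to\infty$ as $|\gamma|\to\infty$. This follows the architecture of the proof of \cref{coarse-to-a-T-menable}, with two adaptations forced by the $L^p$-setting. First, since $p\neq 2$ carries no inner product, I replace the negative-type kernel $\|f(\cdot)-f(\cdot)\|^2$ by the affine difference $f(\cdot)-f(\cdot)$ itself, which is a genuine $1$-cocycle. Second, I use the invariant probability measure $\mu$ to average level-wise data on $\cO_\Gamma Y$ into an honest representation of $\Gamma$; this is exactly the role played by the hypotheses (invariant measure, essential freeness) that are stronger here than in \cref{coarse-to-a-T-menable}.

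Concretely, let $f\colon\cO_\Gamma Y\to L$ be the coarse embedding into the target $L^p$-space $L$, which we may take continuous (partition of unity), with control functions $\rho_\pm$. Because $\mu$ is $\Gamma$-invariant, the Koopman representation $(\pi_\gamma\xi)(y)=\xi(\gamma^{-1}y)$ is isometric on the Bochner space $L^p(Y,\mu;L)$, which is again an $L^p$-space. For a fixed scale $r\in\R_+$ I set $b_r(\gamma)(y)=f(ry)-f(r\gamma^{-1}y)$. A direct pointwise computation (using $\pi_\gamma b_r(\eta)(y)=f(r\gamma^{-1}y)-f(r\eta^{-1}\gamma^{-1}y)$) shows $b_r(\gamma\eta)=b_r(\gamma)+\pi_\gamma b_r(\eta)$, so $b_r$ is a $1$-cocycle. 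Since $d_\Gamma(ry,r\gamma^{-1}y)\le|\gamma|$ always and $\mu(Y)=1$, we get the uniform bound $\|b_r(\gamma)\|^p=\int_Y\|f(ry)-f(r\gamma^{-1}y)\|^p\,d\mu(y)\le\rho_+(|\gamma|)^p$, independent of $r$.

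A single scale does not detect long group elements, so, exactly as in \cref{coarse-to-a-T-menable}, I pass to an $\ell^p$-sum over a sequence of scales. Let $E=\bigoplus_n^{\ell^p}L^p(Y,\mu;L)$ with the diagonal representation $\bigoplus_n\pi$ (still isometric, still an $L^p$-space). Choose integers $a_n$ with $\rho_-(a_n)^p\ge 2^n n$, let $r_\gamma$ be a scale beyond which $d_\Gamma(ry,r\gamma^{-1}y)=|\gamma|$ for $\mu$-a.e.\ $y$ (its existence, uniformly in $y$ on a full-measure set, comes from \cite{Sawicki-completions}*{Remark 3.1}, essential freeness, and compactness of $Y$, as in the cited proof), set $r_n=\max\{r_\gamma\mid\gamma\in B(e,a_{n+1})\}$, and define $b(\gamma)=\bigl(2^{-n/p}\,b_{r_n}(\gamma)\bigr)_n$. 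Then $\|b(\gamma)\|^p\le\sum_n 2^{-n}\rho_+(|\gamma|)^p=\rho_+(|\gamma|)^p$, so $b$ is well defined; and if $a_m\le|\gamma|\le a_{m+1}$, then $r_m\ge r_\gamma$, so the $m$-th summand alone gives $\|b(\gamma)\|^p\ge 2^{-m}\rho_-(|\gamma|)^p\ge 2^{-m}\rho_-(a_m)^p\ge m$. Hence $\|b(\gamma)\|\to\infty$, the affine action $\alpha_\gamma(v)=\pi_\gamma v+b(\gamma)$ is proper, and $\Gamma$ has property $PL^p$.

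The main obstacle is the lower bound that powers properness: securing $d_\Gamma(ry,r\gamma^{-1}y)=|\gamma|$ from a single threshold $r_\gamma$ valid for $\mu$-almost every $y$ simultaneously, now that freeness has been weakened to essential freeness. One must restrict to the full-measure set of points with trivial stabiliser, check that the pointwise thresholds of \cite{Sawicki-completions}*{Remark 3.1} stay bounded there (via compactness and the uniform continuity of the finitely many generator actions), and confirm that the measure-zero non-free set does not affect the integral defining $\|b_{r_n}(\gamma)\|$. The remaining points are routine: that $L^p(Y,\mu;L)$ and its $\ell^p$-sum are genuinely $L^p$-spaces, and that $\pi$ is isometric, both of which rest on the invariance of $\mu$.
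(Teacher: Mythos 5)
Your construction is, in outline, the same as the paper's: the Koopman representation on the Bochner space $L^p(Y,\mu;L)$ (again an $L^p$-space), the cocycle $b(\gamma)=\gamma f-f$ (your sign convention is immaterial), a weighted $\ell^p$-sum over a sequence of scales $r_n$ (the paper packages this as $L^p$ of the product of $\mu$ with the measure giving mass $2^{-n}$ to $r_n$, which is the same thing), the upper bound by $\rho_+(|\gamma|)$, and a lower bound coming from the scale matched to $|\gamma|$. Your cocycle identity and upper bound are correct. The genuine gap is the existence of your thresholds $r_\gamma$: you assert that for each $\gamma$ there is $r_\gamma$ such that $d_\Gamma(ry,r\gamma^{-1}y)=|\gamma|$ for $\mu$-almost every $y$ once $r\ge r_\gamma$, to be obtained by restricting to the free part and invoking compactness of $Y$. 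This is false under mere essential freeness. Compactness yields a uniform threshold only when \emph{every} point is free, since then $\inf_{y\in Y}d(\eta y,\gamma y)>0$ for each $\eta\neq\gamma$; this positive separation is exactly what underlies the compactness step in the proof of \cref{coarse-to-a-T-menable}. The free part of an essentially free action is a full-measure set, not a compact one, and the pointwise thresholds blow up near the non-free points. Concretely, take $\mathrm{SL}_2(\Z)\acts\T^2$ with Lebesgue measure (essentially free, measure-preserving) and $\gamma=-I$: for \emph{every} $r$, each $y$ with $d(y,0)<|\gamma|/(2r)$ satisfies $d_\Gamma(ry,r\gamma y)\le r\,d(y,\gamma y)<|\gamma|$, and this set has positive measure. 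So no almost-everywhere threshold exists for any $r$, and the single-summand lower bound that powers your properness estimate collapses.

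The repair is to demand less of $r_n$: not the equality $d_\Gamma(r_ny,r_n\gamma y)=|\gamma|$ almost everywhere, but only on a set of measure greater than $1/2$. For each free $y$ and each of the finitely many $\gamma$ with $a_n\le|\gamma|<a_{n+1}$, the equality holds for all sufficiently large $r$; hence the sets $Y_n(r)=\{y\in Y \st \forall\gamma\,(a_n\le|\gamma|<a_{n+1}\Rightarrow d_\Gamma(ry,r\gamma y)=|\gamma|)\}$ contain sets increasing to a full-measure set as $r\to\infty$, so one can choose $r_n$ with $\mu(Y_n(r_n))>1/2$. In the properness estimate you then integrate only over $Y_m=Y_m(r_m)$, losing a factor $\mu(Y_m)>1/2$, which is compensated by strengthening your choice of $(a_n)$ from $\rho_-(a_n)^p\ge 2^n n$ to $\rho_-(a_n)^p\ge 2^{n+1}n$; this gives $\|b(\gamma)\|^p\ge 2^{-m}\rho_-(a_m)^p\,\mu(Y_m)\ge m$ whenever $a_m\le|\gamma|<a_{m+1}$. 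With this single change {\textemdash} which is precisely the one idea this proposition needs beyond \cref{coarse-to-a-T-menable} {\textemdash} the rest of your argument goes through verbatim.
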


\begin{proof}
Let $f$ be the coarse embedding of $\cO_\Gamma Y$ to some $L^p$-space $L$ and let $\rho_\pm$ be the control functions. As above, without loss of generality we can assume $f$ to be continuous, in particular measurable. Let $(a_n)$, similarly as in the previous proof, be such an increasing sequence of integers that $\rho_-(a_n)^p\geq 2^{n+1} \cdot n$. Recall that for any $y$ in a free orbit, the distance $d(ry,r\gamma y)$ equals $|\gamma|$ for $r$ large enough \cite{Sawicki-completions}. Hence, we can find a sequence $(r_n)$ increasing sufficiently fast that
\[\mu\left(\left\{y\in Y \st \forall\gamma\in \Gamma \big(a_n\leq |\gamma| < a_{n+1} \implies d(r_ny, r_n\gamma y)=|\gamma| \big) \right\}\right)>1/2\,.\]
Let us denote the above set by $Y_n$.

Let $m$ be the measure on the set $N=\{r_n \st n\in \N_+\}$ such that $m(\{r_n\}) = 2^{-n}$ and let $\nu$ denote the product measure $m \times \mu$ on $N \times Y \eqdef X$. We form a Bochner space $K=L^p(X,\nu; L)$, which comes with an isometric $\Gamma$-action: $(\gamma\cdot v)(r_n, y) = v(r_n, \gamma^{-1}y)$. Note that $K$ is an $L^p$-space. The cocycle for the action is given by $b(\gamma) = \gamma f - f$.

Let us check that the cocycle is correctly defined:
\begin{align*}
\|b(\gamma)\|_K^p &= \|\gamma f - f\|_K^p = \sum_n 2^{-n} \int_Y \|f(r_n\gamma^{-1} y) - f(r_ny)\|_L^p \, \dif\mu(y) \\
&\leq \sum_n 2^{-n} \rho_+(|\gamma^{-1}|)^p = \rho_+(|\gamma|)^p\,.
\end{align*}
To verify properness of the action, note that for any $m\in \N$ and $\gamma \in \Gamma$ such that $a_m \leq | \gamma | < a_{m+1}$, we have:
\begin{align*}
\|b(\gamma)\|_K^p
&= \sum_n 2^{-n} \int_Y \|f(r_n\gamma^{-1} y) - f(r_ny)\|_L^p \, \dif\mu(y) \\
&\geq 2^{-m} \int_Y \|f(r_m\gamma^{-1} y) - f(r_m y)\|_L^p \, \dif\mu(y) \\
&\geq 2^{-m} \int_{Y_m} \rho_- (|\gamma|)^p \, \dif\mu(y) \\
&\geq 2^{-m} \int_{Y_m} 2^{m+1} \cdot  m \, \dif\mu(y) \geq m\,.\qedhere
\end{align*}
\end{proof}

In fact, using the ultraproduct machinery developed by S. Arnt \cite{Arnt} for box spaces, we can strengthen \cref{coarse-to-PL^p} to the following.

\begin{theorem}\label{fibred-to-PL^p}
	Let $\Gamma\acts Y$ be an action of a finitely generated group on a compact metric space. 
Assume the action admits an invariant probability measure $\mu$ and is essentially free with respect to $\mu$. Then, if the warped cone $\cO_\Gamma Y$ admits a \emph{fibred} coarse embedding into an $L^p$-space, then $\Gamma$ has property $PL^p$.
\end{theorem}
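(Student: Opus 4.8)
The plan is to imitate the proof of \cref{coarse-to-PL^p}, replacing the genuine coarse embedding $f$ by the \emph{local} embeddings supplied by the fibred structure and then passing to an ultraproduct to recover an honest affine isometric action. The geometric input is the same: by the essentially-free analogue of \cref{faithful-lemma} together with the fact that $d_\Gamma(ry, r\gamma y) = |\gamma|$ for $r$ large \cite{Sawicki-completions}*{Remark~3.1}, for every $n$ one finds a scale $r_n$ and a set $Y_n\subseteq Y$ with $\mu(Y_n)>1/2$ such that, for $y\in Y_n$ and $|\gamma|\le n$, the point $r_n\gamma y$ lies in $B(r_n y, n)$ and realises $d_\Gamma(r_n y, r_n\gamma y)=|\gamma|$ (this is exactly the set $Y_n$ built in \cref{coarse-to-PL^p}). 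On such a ball the trivialisation $t_n^{r_n y}$ of \cref{def:fibred} turns the section $f_n$ into a genuine map $g_{n,y}\defeq t_n^{r_n y}\circ f_n$ satisfying the two-sided control $\rho_\pm$.

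For each $n$ I would form the Bochner space $K_n = L^p(Y,\mu; E)$ and define a \emph{partial} $1$-cocycle by
\[
b_n(\gamma)(y) = g_{n,y}\big(r_n \gamma^{-1} y\big) - g_{n,y}\big(r_n y\big) \in E, \qquad |\gamma| \le n,
\]
in direct analogy with $b(\gamma) = \gamma f - f$ of \cref{coarse-to-PL^p}. Since $d_\Gamma(r_n y, r_n\gamma^{-1}y)\le|\gamma|\le n$ always, $r_n\gamma^{-1}y\in B(r_n y,n)$ and the control functions give $\|b_n(\gamma)\|_{K_n} \le \rho_+(|\gamma|)$; integrating the lower bound over $Y_n$ gives a properness estimate $\|b_n(\gamma)\|_{K_n}^p \ge \tfrac12 \rho_-(|\gamma|)^p$ whenever $|\gamma|\le n$, exactly as before.

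The new feature is that $g_{n,y}$ depends on the basepoint $r_n y$, so the naive translation action $(\gamma v)(y) = v(\gamma^{-1}y)$ no longer makes $b_n$ a cocycle: comparing the trivialisations over the overlapping balls $B(r_n y, n)$ and $B(r_n\gamma^{-1}y, n)$ introduces precisely the transition isometry $t_n^{r_n y,\, r_n\gamma^{-1}y}$ of \cref{def:fibred}. I would therefore \emph{twist} the Koopman representation by setting $(\gamma_1\cdot v)(y)=t_n^{r_n y,\,r_n\gamma_1^{-1}y}\big(v(\gamma_1^{-1}y)\big)$; the composition law $\tau_n^x\circ(\tau_n^{x'})^{-1}=\mathrm{id}\times t_n^{x,x'}$ forces $t_n^{x,x''}=t_n^{x,x'}t_n^{x',x''}$, so these isometries form a genuine (partial) cocycle and the twist is an isometric action of the $n$-ball of $\Gamma$ for which $b_n$ is an honest partial $1$-cocycle. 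Finally I would take the metric ultraproduct $K_\omega = \prod_{n\to\omega} K_n$ along a non-principal ultrafilter. An ultraproduct of $L^p$-spaces is again an $L^p$-space, so $K_\omega$ carries the required structure; for fixed $\gamma$ all constructions are defined once $n\ge|\gamma|$, so the twisted actions assemble into a genuine isometric $\Gamma$-action on $K_\omega$, and the classes $b_\omega(\gamma)=[(b_n(\gamma))_n]$ form an exact $1$-cocycle with $\rho_-(|\gamma|)/2^{1/p}\le\|b_\omega(\gamma)\|\le\rho_+(|\gamma|)$, hence proper. This yields property $PL^p$.

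I expect the main obstacle to be the twisting step: defining $y\mapsto t_n^{r_n y,\,r_n\gamma^{-1}y}$ into the isometry group of $E$ \emph{measurably} in $y$, checking that the twisted representation is a well-defined isometric action via the overlap compatibility of \cref{def:fibred}, and verifying that the partial cocycle relations survive the ultraproduct without defect. The measurability of the trivialisations as $y$ ranges over $Y$ — an issue absent in the discrete box-space setting of \cite{Arnt} — is the delicate point, and it is what forces the invariant-measure and essential-freeness hypotheses to be used in the same way as in \cref{coarse-to-PL^p}.
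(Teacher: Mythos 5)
Your proposal is correct and follows essentially the same route as the paper's proof: pick scales $r_n$ with $[r_n,\infty)\times Y$ disjoint from $K_n$, define partial cocycles $c_n(\gamma)(y)=t_n^{y}\circ f_n(\gamma^{-1}y)-t_n^{y}\circ f_n(y)$ in $L^p(Y,\mu;L)$, twist the Koopman representation by the transition isometries (whose composition law on triple overlaps yields the local homomorphism and cocycle identities), and assemble the partial data into a proper affine action on an ultraproduct of $L^p$-spaces following Arnt. The only refinement in the paper's version, which you should adopt to make the cocycle identity literally hold, is that the twisted representation uses the \emph{linear parts} $T_n^{x,x'}$ of the (affine) transition isometries $t_n^{x,x'}$, since a $1$-cocycle must be taken with respect to a linear isometric representation.
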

\begin{proof}
Assuming the existence of a fibred coarse embedding of $\cO_\Gamma Y$ into some $L^p$-space $L$, we obtain from \cref{def:fibred} isometric copies $\{L_x\}_{x\in \cO_\Gamma Y}$ of $L$ as well as sequences of maps $f_n\colon \cO_\Gamma Y \to \bigsqcup_{x\in \cO_\Gamma Y} L_x$ and bounded sets $K_n\subseteq \cO_\Gamma Y$. We let $(r_n)$ be an unbounded increasing sequence of positive reals such that $K_n \cap \big([r_n,\infty) \times Y\big) = \emptyset$. We also have the trivialising maps $\tau_n^x$, $t_n^x$, and $t_n^{x,x'}$ as in \cref{def:fibred}. By a partition-of-unity argument we can assume measurability of the map $(x,x')\mapsto t_n^x \circ f_n(x')$ as before.

Fix $n\in \N_+$. We will identify the set $Y$ with $\{r_n\}\times Y \subseteq \cO_\Gamma Y$, which allows us to treat $f_n$ as a function on $Y$.

We define local cocycles $c_n\colon B(e,n)\to L^p(Y,\mu; L)$ similarly as in the previous proof:
\[c_n(\gamma)(y) =  t^y_n \circ f_n(\gamma^{-1} y) - t^y_n \circ f_n(y)\]
(we use here the fact that $d_\Gamma(x,\gamma^{-1} x)\leq |\gamma^{-1}|\leq n$, so $f_n(\gamma^{-1}y)$ lies in the domain of $t^y_n$). We also need to correct our ``representation'', which also will be defined only for $\gamma\in B(e,n)$:
\[(\gamma^{-1} v)(y) = T^{y,\gamma y}_n v(\gamma y)\,,\]
where $T^{y,\gamma y}_n$ is the linear part of the affine isometry $t_n^{y,\gamma y} \colon L\to L$ from \cref{def:fibred}.

Let us check that we have defined a local homomorphism:
\begin{align*}
(\eta^{-1}(\gamma^{-1} v))(y)
&= T^{y,\eta y}_n (\gamma^{-1} v) (\eta y) = T^{y,\eta y}_n T^{\eta y, \gamma\eta y}_n v(\gamma\eta y) \\
&= T^{y, \gamma\eta y}_n v(\gamma\eta y) = ((\eta^{-1}\gamma^{-1}) v) (y)\,,
\end{align*}
where we used the equality $T^{y,\eta y}_n T^{\eta y, \gamma\eta y}_n = T^{y, \gamma\eta y}_n$, which follows from the equality of the respective affine isometries:
\begin{align*}
\id_{I} \times \left( t^{y,\eta y}_n \circ t^{\eta y, \gamma\eta y}_n \right) 
&= \tau_n^y\circ (\tau_n^{\eta y})^{-1} \circ \tau_n^{\eta y}\circ (\tau_n^{\gamma \eta y})^{-1} \\
&= \tau_n^y \circ (\tau_n^{\gamma \eta y})^{-1} \\
&= \id_I \times t^{y,\gamma\eta y}_n ,
\end{align*}
which in turn uses the non-emptiness of $I = B(y,n)\cap B(\eta y, n) \cap B(\gamma\eta y, n)$.
Now, we can check that $c_n$ is indeed a local cocycle with respect to the above local representation. First, we observe that:
\begin{align*}
(\gamma^{-1} c_n(\eta^{-1}))(y)
&= T^{y,\gamma y}_n c_n(\eta^{-1})(\gamma y)\\
&= T^{y,\gamma y}_n \left(t^{\gamma y}_n \circ f_n(\eta \gamma y) - t^{\gamma y}_n \circ f_n(\gamma y) \right) \\
&= t^{y,\gamma y}_n \left(t^{\gamma y}_n \circ f_n(\eta \gamma y) - t^{\gamma y}_n \circ f_n(\gamma y)\right) \\
&= t^{y}_n \circ f_n(\eta \gamma y) - t^{y}_n \circ f_n(\gamma y)\,,
\end{align*}
and hence we get the cocycle condition:
\begin{align*}
(\gamma^{-1}c_n(\eta^{-1}))(y) + c_n&(\gamma^{-1})(y) \\
&= t^{y}_n \circ f_n(\eta \gamma y) - t^{y}_n \circ f_n(\gamma y) 
+ t^y_n \circ f_n(\gamma y) - t^y_n \circ f_n(y)\\
&=  t^{y}_n \circ f_n(\eta \gamma y) - t^y_n \circ f_n(y)
= c_n(\gamma^{-1}\eta^{-1})(y)\,.
\end{align*}

Recall that $d_\Gamma(r_n y, r_n \gamma y)$ is always bounded by $|\gamma|$ irrespective of $n$, so we have 
\[\|c_n(\gamma)(y)\|_L
=\| t^y_n \circ f_n(\gamma^{-1} y) - t^y_n \circ f_n(y)\|_L
\leq \rho_+(|\gamma|)\,,\]
and also $d_\Gamma(r_n y, r_n \gamma y)$ equals $|\gamma|$ for almost every $y$ and $n$ large enough, and then we get $\|c_n(\gamma)(y)\|_L \geq \rho_-(|\gamma|)$, in particular $\lim_n \|c_n(\gamma)\| \geq \rho_-(|\gamma|)$.

Consider now the space $\ell^\infty(\N; L^p(Y,\mu;L))$, a non-principal ultrafilter $\U$, and the quotient space $\LL = \ell^\infty(\N; L^p(Y,\mu;L))/c_0^\U(\N; L^p(Y,\mu;L))$, called an ultraproduct, where
\[c_0^\U(\N; L^p(Y,\mu;L)) = \{(l_n)\in \ell^\infty(\N; L^p(Y,\mu;L)) \st \lim_\U \|l_n\|_{L^p} = 0\}\,.\]
The norm on $\LL$ is given by the ultralimit $\|(l_n) \|_\LL = \lim_\U \|l_n\|_{L^p}$.

We can extend our local isometric representations $B(e,n)\to \mathrm{Isom}(L^p(Y,\mu;L))$ to maps $\Gamma \to \mathrm{Isom}(L^p(Y,\mu;L))$ by the identity outside $B(e,n)$ and consider the product map $\Gamma\to \mathrm{Isom}(\ell^\infty(\N,L^p(Y,\mu;L)))$ and similarly extend the cocycles $c_n\colon B(e,n)\allowbreak \to L^p(Y,\mu;L)$ by zero outside $B(e,n)$ and consider the product map into the space $\ell^\infty(\N,L^p(Y,\mu;L))$ (we use the uniform bound $\|c_n\|\leq \rho_+(|\gamma|)$). It is easy to check that, after passing to the quotient space $\LL$, the first map becomes an isometric representation on $\LL$, and the second is a cocycle with respect to it (see \cite{Arnt}*{Lemma~3.3}). We also have
\[\rho_-(|\gamma|) \leq \|(c_n)\|_\LL \leq \rho_+(|\gamma|)\,,\]
so the obtained affine isometric action is indeed proper. The claim follows from the fact that an ultraproduct of $L^p$-spaces is again an $L^p$-space.
\end{proof}

\section{The importance of freeness}\label{sec:freeness}

In this section we provide examples (\cref{ball-action} and~\labelcref{non-amenable-action}) showing why one needs the freeness assumption in \cref{Haagerup-to-fibred} (and \cref{PL^p-to-fibred}) and in \cref{A-to-amenable} (and \cref{coarse-to-a-T-menable}). It turns out that there exist counterexamples with only \emph{one} fixed point, where the action is free on its complement (and all the other assumptions of the respective theorems are satisfied). Interestingly, these two theorems provide implications in ``opposite directions'' (in the former we deduce a property of the warped cone from a property of the group and in the latter we deduce a property of the action from a property of the warped cone). It is also worth observing that for the converse implications (respectively \cref{fibred-to-PL^p} and \cref{Roe-amenable-actions}) freeness is not required.

Let $CY = [0,1] \times Y / \{0\} \times Y$ be the compact cone over $Y$ equipped with the metric $d((\theta, y), (\theta', y')) = |\theta-\theta'| \cdot \operatorname{diam}(Y) + \min(\theta,\theta') \cdot d(y,y')$ (where we denote the metric on $Y$ and $CY$ with the same letter $d$). 

\begin{prop}\label{freeness-embeddability}
The following conditions are equivalent for $\Gamma\acts Y$:
\begin{enumerate}
\item\label{freeness-e-fibred-item} $\cO_\Gamma CY$ fibred coarsely embeds into a Hilbert space (respectively: Banach space $E$);
\item\label{freeness-e-CY-item} $\cO_\Gamma CY$ coarsely embeds into a Hilbert space (respectively: Banach space $E$);
\item\label{freeness-e-Y-item} $\cO_\Gamma Y$ coarsely embeds into a Hilbert space (respectively: Banach space $E$);
\end{enumerate}
where the Banach space version requires that $Y$ is a geodesic space (up to bi-Lipschitz equivalence) and $E$ contains an isomorphic copy of $E \oplus E$ .
\end{prop}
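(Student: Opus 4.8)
The plan is to run the cycle (\ref{freeness-e-CY-item})$\Rightarrow$(\ref{freeness-e-fibred-item})$\Rightarrow$(\ref{freeness-e-Y-item})$\Rightarrow$(\ref{freeness-e-CY-item}), after first recording the coarse geometry relating $\cO_\Gamma CY$ to $\cO_\Gamma Y$. The action $\Gamma\acts CY$ fixes the cone point $c$, so $\cO CY$ carries a fixed ray (the \emph{spine}) $\{(r,c):r>0\}$, and the distance from $(r,\theta,y)$ to the spine is $r\theta\,\diam Y$. Writing $\rho=r\theta$, I would use the two equivariant maps $q\colon\cO_\Gamma CY\to\cO_\Gamma Y$, $q(r,\theta,y)=(r\theta,y)$, and $j\colon\cO_\Gamma Y\to\cO_\Gamma CY$, $j(\rho,y)=(\rho,1,y)$, onto the slice $\{\theta=1\}$. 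Both send generator shortcuts to generator shortcuts, and a direct comparison of the two warped $\ell^1$-cone metrics — based on the elementary estimate $\min(r\theta,r'\theta')\le\min(r,r')\min(\theta,\theta')+|r-r'|+\min(r,r')|\theta-\theta'|$ (valid since $\theta,\theta'\le 1$) — shows that $q$ is coarsely Lipschitz, that $j$ is a coarse embedding, and that $q\circ j=\id$. In particular the slice is coarsely equivalent to $\cO_\Gamma Y$. The geodesic hypothesis on $Y$ is what keeps these warped path-length comparisons under control, and it costs nothing as we only need them up to bi-Lipschitz equivalence.

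The implication (\ref{freeness-e-CY-item})$\Rightarrow$(\ref{freeness-e-fibred-item}) is immediate: a coarse embedding is a fibred coarse embedding with the trivial field and constant trivialisations in the sense of \cref{def:fibred}. For (\ref{freeness-e-Y-item})$\Rightarrow$(\ref{freeness-e-CY-item}) I would manufacture an honest embedding of $\cO_\Gamma CY$ from a coarse embedding $g\colon\cO_\Gamma Y\to H$ by setting $F(r,\theta,y)=\bigl(g(q(r,\theta,y)),\,r\bigr)\in H\oplus\R$. The upper bound follows from the coarse Lipschitzness of $q$ and $g$. For the lower bound one checks that $(q,r)$ is \emph{jointly} uniformly expansive, i.e.\ that $d_\Gamma(p,p')$ is controlled by $d_\Gamma(q(p),q(p'))+|r-r'|$: the apex term $|r-r'|\diam Y$ is read off the $\R$-coordinate, the $\theta$-term $\min(r,r')|\theta-\theta'|\diam Y$ is controlled by the first coordinate of $q$ together with $|r-r'|$, and the warped $Y$-term at scale $\min(r,r')\min(\theta,\theta')\approx\rho$ is captured by $g\circ q$, since the warped geometry in the $Y$-direction at spine-distance $\rho$ is exactly that of the level $\rho$ of $\cO_\Gamma Y$. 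In the Banach case one replaces $H\oplus\R$ by $E\oplus E$ and folds it back into $E$ using the assumed copy $E\oplus E\hookrightarrow E$, again using the geodesic assumption in the metric comparisons.

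The crux is (\ref{freeness-e-fibred-item})$\Rightarrow$(\ref{freeness-e-Y-item}): a fibred coarse embedding of $\cO_\Gamma CY$ must be \emph{upgraded} to a genuine coarse embedding of $\cO_\Gamma Y$, and this is exactly where the single fixed point does the work. Restricting the given fibred coarse embedding along $j$ already yields a fibred coarse embedding of $\cO_\Gamma Y$, so it remains to untwist it. The mechanism I would use is that each slice point $(\rho,1,y)$ is joined to the spine point $(\rho,c)$ by the radial segment $\theta\mapsto(\rho,\theta,y)$, which keeps $y$ \emph{constant} and hence crosses no shortcut; transporting the local sections along a chain of trivialisations down this segment to the common spine point $(\rho,c)$ produces candidate global vectors. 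Two nearby slice points $(\rho,1,y),(\rho,1,y')$, together with their radial segments and a shortcut path between them, bound a ``sector'' $\{(\rho,\theta,y''):\theta\in[0,1]\}$ of the cone, which is coarsely contractible; filling it by a grid of $n$-balls should force the monodromies $t_n^{x,x'}$ accumulated along the two segments to agree, so that the transported vectors inherit the required two-sided bounds. In the language of \cref{faithful-lemma}, the cover $\Gamma\times\cO CY\to\cO_\Gamma CY$ is asymptotically faithful \emph{away from} the spine, and the contractible cone directions emanating from the fixed point are precisely what kill the monodromy cocycle that otherwise obstructs genuine embeddability.

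The main obstacle is this final untwisting: controlling the product of isometries $t_n^{x,x'}$ accumulated along radial segments of \emph{unbounded} length $\rho\,\diam Y$, and showing that the discrepancy between two nearby such products is uniformly small. I expect this to require a careful scale-by-scale covering of the cone sectors by trivialised balls — a coarse disc-filling estimate — and this is the step where the geodesic hypothesis and the explicit cone geometry are indispensable. Once the monodromy is identified as a coarse coboundary, the upgraded genuine embedding of $\cO_\Gamma CY$ restricts along $j$ to the desired coarse embedding of $\cO_\Gamma Y$, closing the cycle; the same argument should run verbatim for a Banach target $E$ satisfying $E\oplus E\hookrightarrow E$.
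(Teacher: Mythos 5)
Your cycle is fine at its two easy corners, but the crucial implication \eqref{freeness-e-fibred-item}$\Rightarrow$\eqref{freeness-e-Y-item} is not proved, and the mechanism you propose for it cannot work as described. A fibred coarse embedding gives the two-sided estimate $\rho_-\circ d(y,y')\leq\|t_n^x\circ f_n(y)-t_n^x\circ f_n(y')\|\leq\rho_+\circ d(y,y')$ \emph{only for pairs $y,y'$ lying in a common $n$-ball} outside $K_n$; it says nothing about pairs at distance greater than $n$. Consequently, even if you do manage to trivialise the field over the complement of a bounded set (i.e.\ even if the monodromy of the transition isometries $t_n^{x,x'}$ along chains of charts is trivial, which is indeed plausible here because loops can be contracted towards the spine), the resulting globally trivialised map is only an ``$n$-local'' coarse embedding: it has no lower bound at scales beyond $n$, and no weighted combination of these maps over $n$ restores uniform expansiveness (weights must decay for convergence, which destroys the lower bound, and the domains shrink as the $K_n$ grow). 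This is exactly why fibred coarse embeddability is strictly weaker than coarse embeddability in general {\textemdash} box spaces of free groups are the standard example {\textemdash} so the obstruction you set out to kill, a monodromy cocycle, is not the real obstruction; the missing ingredient is large-scale lower bounds, which monodromy-triviality does not supply. Your own text concedes that this ``untwisting'' step is an expectation rather than an argument, and it is precisely the heart of the proposition.

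The paper's proof of \eqref{freeness-e-fibred-item}$\Rightarrow$\eqref{freeness-e-Y-item} uses the cone structure in a different way: through \emph{bounded} slices rather than the unbounded slice $\{\theta=1\}$. For any $r>0$ and $R\geq r$, the slice $\{r/R\}\times Y$ inside $(CY,d_R)$ is an \emph{isometric} copy of $(Y,d_r)$ (\cite{Sawicki-completions}*{Lemma~1.6}), and its warped diameter is at most $n\defeq\lceil r\diam Y\rceil$ \emph{independently of $R$}. Choosing $R\geq\max(r_n,r)$ pushes this copy outside $K_n$, so the whole copy fits inside a \emph{single} trivialising ball $B(z_0,n)$, where the fibred embedding restricts to a genuine coarse embedding of $(Y,d_r)$ with control functions $\rho_\pm$ independent of $r$. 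One then invokes the criterion of \cite{Sawicki-completions}*{Lemma~4.1, Corollary~4.2} that $\cO_\Gamma Y$ coarsely embeds if and only if the family $\{(Y,d_r)\st r>0\}$ embeds uniformly; this is also where the geodesic hypothesis enters in the Banach case (not in any disc-filling estimate). The same criterion applied twice, together with the identity $\{(Y,d_{\theta s})\st s>0,\,\theta\in(0,1]\}=\{(Y,d_r)\st r>0\}$, gives \eqref{freeness-e-CY-item}$\Leftrightarrow$\eqref{freeness-e-Y-item}. Your direct map $F=(g\circ q,r)$ is a reasonable hands-on substitute for that step, but note that its key claim {\textemdash} that the warped geometry of a slice at spine-distance $\rho$ agrees with that of the level $\rho$ of $\cO_\Gamma Y$ {\textemdash} is nontrivial (paths may leave the slice) and is again Lemma~1.6 above; and in any case no repair of \eqref{freeness-e-Y-item}$\Rightarrow$\eqref{freeness-e-CY-item} compensates for the missing implication out of \eqref{freeness-e-fibred-item}.
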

\begin{proof}
Let $d_r$ denote the warped metric coming from the metric $rd$ on $Y$ or $CY$. It is proved in \cite{Sawicki-completions}*{Lemma 4.1} that the warped cone $\cO_\Gamma Y$ embeds coarsely into a Hilbert space if and only if the family of metric spaces $((Y,d_r))_{r>0}$ embeds coarsely into a Hilbert space in a uniform way. If $(Y,d)$ is a geodesic space, then the same holds for Banach spaces as above, see \cite{Sawicki-completions}*{Corollary 4.2} (this result is stated for $L^p$-spaces, but essentially the same proof works under our assumption).

When the considered $\Gamma$-space is itself a cone $CY$, we can apply the argument twice: the embeddability of $\cO_\Gamma CY$ is equivalent to the embeddability of the family $\{(CY,d_{s})\st s>0\}$, which in turn is equivalent to the embeddability of
\begin{equation}\label{eq:sections-double-cone}
\{(Y,d_{\theta\cdot s}) \st s>0,\, \theta\in(0,1]\} = \{(Y,d_{r}) \st r>0 \}\,.
\end{equation}
The last equality immediately proves the equivalence of conditions \eqref{freeness-e-CY-item} and \eqref{freeness-e-Y-item}.

Now, assuming \eqref{freeness-e-fibred-item} for a Hilbert or Banach space $E$, there exist non-decreasing and unbounded functions $\rho_\pm\colon [0,\infty) \to [0,\infty)$ and, for any $n$, a family of maps $F_n^z\colon B(z,n)\to E$ satisfying
\[\rho_-\circ d_\Gamma (x,x')\leq \|F_n^z(x) - F_n^z(x') \|\leq \rho_+\circ d_\Gamma (x,x')\]
for all $z$ in a complement of a bounded set $K_n \subseteq \cO_\Gamma CY$. There is some $r_n<\infty$ such that $[r_n, \infty) \times CY \subseteq \cO_\Gamma CY \setminus K_n$.

We will prove coarse embeddability of the family $\{(Y,d_{r}) \st r>0 \}$ with uniform estimates by $\rho_\pm$, which is equivalent to \eqref{freeness-e-Y-item}. Given $r>0$, let $R = \max\left(r_{\lceil\diam Y \cdot r\rceil}, r\right)$ and $\theta=\frac{r}{R}$. Then, $(Y,d_r)$ embeds isometrically into $(CY,d_R)$ as $\{\theta\} \times Y$ (consult \cite{Sawicki-completions}*{Lemma~1.6}), and $F_R^{z_0}$ yields a coarse embedding for any $z_0\in \{\theta\} \times Y \subseteq (CY,d_R)$.
\end{proof}

\begin{ex}\label{ball-action} Let $\Gamma$ be a free subgroup of $\mathrm{SU}(2)\simeq \SS^3 \subseteq \C^2$ such that the action $\Gamma\acts \SS^3$ has a spectral gap as established by \cite{BG} and let $B$ denote the unit ball in $\C^2$. Then the warped cone $\cO_\Gamma B$ does not fibred coarsely embed into any $L^p$-space for $1\leq p < \infty$ even though $\Gamma\acts B$ is a linearisable action of an a-T-menable group which is free after removing one fixed point.
\end{ex}
\begin{proof}
We know that $\cO_\Gamma \SS^3$ does not embed coarsely into any $L^p$-space by the spectral gap property \cite{Nowak--Sawicki}. By \cref{freeness-embeddability} this implies that $\cO_\Gamma C\SS^3\simeq \cO_\Gamma B$ does not fibred coarsely embed into these spaces.

Linearisability of the action is obvious for the Hilbert space case and follows from \cref{manifold} applied to the action on $\SS^3$ for $p\neq 2$.
\end{proof}

Clearly, the same can be said for any subgroup action on an ambient compact Lie group if the action has a spectral gap, and the subgroup is a-T-menable. Such examples abound by~\cite{BdS}.

Similarly, one can obtain the following.

\begin{prop}\label{freeness-amenability}
The following conditions are equivalent for $\Gamma\acts Y$:
\begin{enumerate}
\item\label{freeness-a-CY-item} $\cO_\Gamma CY$ has property A;
\item\label{freeness-a-Y-item} $\cO_\Gamma Y$ has property A;
\item\label{freeness-a-Y+item} $\cO_\Gamma Y^+$ has property A, where $Y^+ = Y\sqcup \{*\}$ and the action on $\{*\}$ is trivial.
\end{enumerate}
\end{prop}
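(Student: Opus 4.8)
The plan is to mirror the proof of \cref{freeness-embeddability} as closely as possible, using the property-A analogue of the uniform-embeddability reduction from \cite{Sawicki-completions}. The key tool will be the fact that a warped cone $\cO_\Gamma Z$ has property A if and only if the family of rescaled warped metric spaces $\{(Z,d_r)\}_{r>0}$ has property A \emph{uniformly}, i.e.\ with a single choice of the function $N$ and a single rate of decay for the maps $A_n$ across all scales $r$. This is the property-A counterpart of \cite{Sawicki-completions}*{Lemma~4.1}, and I would either cite it directly or note that the same averaging-over-scales argument applies.

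For the equivalence of \eqref{freeness-a-CY-item} and \eqref{freeness-a-Y-item}, I would argue exactly as in \cref{freeness-embeddability}: the double-cone identity $\{(Y,d_{\theta s}) \st s>0,\ \theta\in(0,1]\} = \{(Y,d_r)\st r>0\}$ shows that uniform property A of the scaled metrics on $CY$ is equivalent to uniform property A of the scaled metrics on $Y$, whence property A of $\cO_\Gamma CY$ is equivalent to property A of $\cO_\Gamma Y$. The implication \eqref{freeness-a-Y+item}$\Rightarrow$\eqref{freeness-a-Y-item} is immediate, since $\cO_\Gamma Y$ embeds isometrically as a subspace of $\cO_\Gamma Y^+$ and property A passes to subspaces.

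The genuinely new content is \eqref{freeness-a-Y-item}$\Rightarrow$\eqref{freeness-a-Y+item}, i.e.\ producing property A for the warped cone over the one-point extension $Y^+$. Here I would build the maps $A_n^+$ on $\cO_\Gamma Y^+$ out of the maps $A_n$ on $\cO_\Gamma Y$ as follows. On the cone over $Y$, reuse the existing $A_n$ (restricting supports to the appropriate sphere $rY$ as in the proof of \cref{A-to-amenable}). On the fixed-point ray $\R_+ \cdot *$, simply let $A_n^+(r*) = \delta_{r*}$ be the Dirac mass at the point itself; since $*$ is fixed, the warping creates no shortcuts from $r*$ to any other point, so this ray sits at controlled distance from the rest of the cone and the Dirac assignment is trivially consistent. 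The only thing to verify is the $1/n$-variation condition for pairs $(ry, r*)$ where one point lies near $Y$ and the other near $*$; but because the warped distance from the fixed ray to any $rY$ grows with $r$ (there being no group shortcut to $*$), such pairs are eventually too far apart to matter at each scale, so the two pieces can be glued without interaction.

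The main obstacle I anticipate is precisely this gluing estimate near the cone point: one must check that, in the warped metric $d_\Gamma$ on $\cO_\Gamma Y^+$, the fixed-point ray is coarsely isolated from the spheres $rY$ at each fixed scale, so that the $N(n)$-neighbourhoods used by $A_n$ and by $A_n^+$ do not overlap in a way that violates the variation bound. This requires understanding how the metric $d((\theta,y),(\theta',*)) = |\theta-\theta'|\cdot\operatorname{diam}(Y^+) + \min(\theta,\theta')\cdot d(y,*)$ behaves under warping, and confirming that no sequence of generator-shortcuts can bring $*$ close to the rest of $Y$ (which holds because $*$ is a \emph{fixed} point, so $s\cdot(\theta,*) = (\theta,*)$ for all $s\in S$). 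Once this isolation is established, the verification of \cref{def:property-A} for $A_n^+$ is routine.
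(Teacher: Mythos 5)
Your treatment of the equivalence of \eqref{freeness-a-CY-item} and \eqref{freeness-a-Y-item} is exactly the paper's proof: the paper also invokes the fact that property A of a warped cone is equivalent to \emph{uniform} property A of its sections $\{(Y,d_r) \st r>0\}$ (citing \cite{Sawicki-completions}*{Proposition~5.2}) together with the double-cone identity \eqref{eq:sections-double-cone}. The paper then settles the equivalence of \eqref{freeness-a-Y-item} and \eqref{freeness-a-Y+item} by the analogous ``easy exercise'' at the level of sections, namely that $\{(Y,d_r) \st r>0\}$ has uniform property A if and only if $\{(Y^+,d_r) \st r>0\}$ does. Your proposal instead glues measures directly on the cone $\cO_\Gamma Y^+$, and this is where there is a genuine gap.

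The resolution you offer for your ``main obstacle'' is incorrect: the fixed ray is \emph{not} coarsely isolated from the rest of the cone, because the two pieces come together at the cone tip. Quantitatively, with $\varepsilon_0 = \min_{y\in Y} d(y,*)>0$, one has $d_\Gamma\bigl((r,y),(r,*)\bigr)\le r\, d(y,*)\le r\diam (Y^+)$, so for \emph{every} $n$ and every $r\le n/\diam (Y^+)$ the pair $\bigl((r,y),(r,*)\bigr)$ lies within warped distance $n$. For such a pair your maps give $A_n^+\bigl((r,y)\bigr)$ supported in $rY$ and $A_n^+\bigl((r,*)\bigr)=\delta_{(r,*)}$ supported on the ray; the supports are disjoint, so the $\ell^1$-variation equals $2$, violating \cref{def:property-A}. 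Your phrase ``eventually too far apart'' is only true for large $r$: a crossing estimate (any chain of mileage $\le n$ joining the two pieces keeps its radius within $n/\diam (Y^+)$ of $r$ and must contain a $d$-move of cost at least $(r-n/\diam(Y^+))\varepsilon_0$) shows all offending pairs have radius $O(n)$, hence lie in a bounded set depending on $n$. The proof can be completed, but this requires an extra step you do not supply: for instance, redefine $A_n^+$ to be a single fixed Dirac mass on that bounded region and interpolate with a cut-off of Lipschitz constant $\sim 1/n^2$ over an annulus of width $\sim n^2$ (large supports are harmless, as $N(n)$ may depend on $n$); or, as the paper in effect does, argue section-by-section: for $r\ge n/\varepsilon_0$ the point $*$ is at $d_r$-distance $>n$ from $Y$ and your Dirac gluing works verbatim, while for $r< n/\varepsilon_0$ the whole section $(Y^+,d_r)$ has diameter at most $(n/\varepsilon_0)\diam(Y^+)$ and one may take all measures equal to one Dirac mass. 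A further minor point: in \eqref{freeness-a-Y+item}$\Rightarrow$\eqref{freeness-a-Y-item}, the inclusion $\cO_\Gamma Y\subseteq \cO_\Gamma Y^+$ is only quasi-isometric onto its image rather than isometric (the ambient cone metric carries the factor $\diam(Y^+)$ instead of $\diam(Y)$, and one should note that chains passing through the ray cannot shorten distances); this does not affect the conclusion, since property A passes to subspaces and is a coarse invariant.
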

\begin{proof} The equivalence of \eqref{freeness-a-CY-item} and \eqref{freeness-a-Y-item} follows from the fact that property A of a warped cone $\cO_\Gamma Y$ is equivalent to uniform property A of its sections $\{(Y,d_{r}) \st r>0 \}$, {Proposition~5.2} in \cite{Sawicki-completions}, and \cref{eq:sections-double-cone} on page~\labelcpageref{eq:sections-double-cone}.

Similarly, the equivalence of \eqref{freeness-a-Y-item} and \eqref{freeness-a-Y+item} follows from an easy exercise that the family $\{(Y,d_{r}) \st r>0 \}$ has property A in a uniform way if and only if the family $\{(Y^+,d_{r}) \st r>0 \}$ does.
\end{proof}

\begin{ex}\label{non-amenable-action} Let $\Gamma\acts Y$ be a free, Lipschitz, and amenable action of a non-amenable finitely generated group on a compact $Y\subseteq \R^n$. Then, the warped cone $\cO_\Gamma CY$ has property A even though the action $\Gamma \acts CY$ is not amenable, and similarly for the warped cone $\cO_\Gamma Y^+$ and the action $\Gamma \acts Y^+$.
\end{ex}
\begin{proof}
Property A of $\cO_\Gamma Y$ follows from \cref{Roe-amenable-actions}. By the above \cref{freeness-amenability}, it implies also property A of $\cO_\Gamma CY$ and $\cO_\Gamma Y^+$. However, $CY$ and $Y^+$ contain a fixed point for the action of $\Gamma$, and the action on a point is amenable if and only if the group is amenable.
\end{proof}

If we require $\Gamma$ to be non-a-T-menable, we can obtain a stronger version of \cref{non-amenable-action}.

\begin{ex}\label{non-a-T-menable-action}
There exist warped cones with property A over actions which are not even a-T-menable.
\end{ex}

\section{Open problems}\label{sec:problems}

In the proof of \cref{coarse-to-a-T-menable} instead of the average $h=\sum\frac{h_n}{2^n}$ of kernels $h_n(\gamma, y) \defeq k(r_ny, r_n\gamma y)$ one may consider their ultralimit. The limit kernel is still proper and negative definite (in the sense of inequality \eqref{ineq:negative-definite} on page~\labelcpageref{ineq:negative-definite}), even if we assume that the warped cone admits only an asymptotic coarse embedding instead of a coarse embedding. However, we cannot see any reason, why it would be continuous, which leaves the following question open.

\begin{question} 
	Is a free action $\Gamma\acts Y$ always a-T-menable if $\cO_\Gamma Y$ admits an asymptotic coarse embedding into a Hilbert space?
\end{question}

It is also tempting to ask about the converse, since it is true (\cref{Haagerup-to-fibred}) for actions admitting an invariant measure (a-T-menability of the action is then equivalent to a-T-menability of the group), and its analogue, \cref{Roe-amenable-actions}, is valid for amenable actions and warped cones with property A.

\begin{question} Does a-T-menability of a free action $\Gamma\acts Y$ imply fibred or asymptotic coarse embeddability of $\cO_\Gamma Y$ into a Hilbert space (assuming that $\cO Y$ itself is embeddable)?
\end{question}

If we assume that the above action is isometric, then it typically also admits some invariant Hausdorff measure, and we are back in the case of an a-T-menable group. Hence, if one wants to give a positive answer to the above, one should first study the following question.

\begin{question} Does a warped cone over a free action by \emph{Lipschitz} homeomorphisms of an a-T-menable group admit an asymptotic coarse embedding or a fibred coarse embedding?
\end{question}

\begin{center}
\medskip This is a preprint version of the article \href{https://doi.org/10.1142/S179352532050034X}{DOI 10.1142/S179352532050034X} \\published by the Journal of Topology and Analysis.
\end{center}

\begin{bibdiv}
\begin{biblist}

\bib{AFS}{article}{
   author={Alekseev, Vadim},
   author={Finn-Sell, Martin},
   title={Sofic boundaries of groups and coarse geometry of sofic
   approximations},
   journal={Groups Geom. Dyn.},
   volume={13},
   date={2019},
   number={1},
   pages={191--234},
   issn={1661-7207},
   doi={10.4171/GGD/482},
}

\bib{Arnt}{article}{
   author={Arnt, S.},
   title={Fibred coarse embeddability of box spaces and proper isometric
   affine actions on $L^p$ spaces},
   journal={Bull. Belg. Math. Soc. Simon Stevin},
   volume={23},
   date={2016},
   number={1},
   pages={21--32},
   issn={1370-1444},
}

\bib{AD}{article}{
	title = {Examples of random groups},
	author = {Arzhantseva, G.},
	author = {Delzant, T.},
	note = {Preprint available on the authors’ websites},
	year = {2008},
}

\bib{AGS}{article}{
    title = {Coarse non-amenability and coarse embeddings},
    author = {Arzhantseva, G.},
    author = {Guentner, E.},
    author = {\v{S}pakula, J.},
    journal = {Geom. Funct. Anal.},
    volume = {22},
    pages = {22--36},
    year = {2012},
    doi = {10.1007/s00039-012-0145-z},
}

\renewcommand{\eprint}{\neweprint}

\bib{AO}{article}{
    title = {Graphical small cancellation groups with the {Haagerup} property},
    author = {Arzhantseva, G.},
    author = {Osajda, D.},
    eprint = {1404.6807},
    year = {2014}, 
}

\renewcommand{\eprint}{\oldeprint}

\bib{AT}{article}{
   author={Arzhantseva, Goulnara},
   author={Tessera, Romain},
   title={Relative expanders},
   journal={Geom. Funct. Anal.},
   volume={25},
   date={2015},
   number={2},
   pages={317--341},
   issn={1016-443X},
   doi={10.1007/s00039-015-0316-9},
}

\bib{BaumGuentnerWillett}{article}{
   author={Baum, Paul},
   author={Guentner, Erik},
   author={Willett, Rufus},
   title={Expanders, exact crossed products, and the Baum-Connes conjecture},
   journal={Ann. K-Theory},
   volume={1},
   date={2016},
   number={2},
   pages={155--208},
   issn={2379-1683},
   doi={10.2140/akt.2016.1.155},
}

\bib{BdS}{article}{
   author={Benoist, Yves},
   author={de Saxc{\'e}, Nicolas},
   title={A spectral gap theorem in simple Lie groups},
   journal={Invent. Math.},
   volume={205},
   date={2016},
   number={2},
   pages={337--361},
   issn={0020-9910},
   doi={10.1007/s00222-015-0636-2},
}

\bib{BG}{article}{
   author={Bourgain, Jean},
   author={Gamburd, Alex},
   title={On the spectral gap for finitely-generated subgroups of $\rm
   SU(2)$},
   journal={Invent. Math.},
   volume={171},
   date={2008},
   number={1},
   pages={83--121},
   issn={0020-9910},
   doi={10.1007/s00222-007-0072-z},
}

\bib{CWY}{article}{
   author={Chen, Xiaoman},
   author={Wang, Qin},
   author={Yu, Guoliang},
   title={The maximal coarse Baum-Connes conjecture for spaces which admit a
   fibred coarse embedding into Hilbert space},
   journal={Adv. Math.},
   volume={249},
   date={2013},
   pages={88--130},
   issn={0001-8708},
   doi={10.1016/j.aim.2013.09.003},
}

\bib{DDD-CR}{article}{
   author={Daniilidis, Aris},
   author={Deville, Robert},
   author={Durand-Cartagena, Estibalitz},
   author={Rifford, Ludovic},
   title={Self-contracted curves in Riemannian manifolds},
   journal={J. Math. Anal. Appl.},
   volume={457},
   date={2018},
   number={2},
   pages={1333--1352},
   issn={0022-247X},
   doi={10.1016/j.jmaa.2017.04.011},
}

\bib{DK}{article}{
   author={Delabie, Thiebout},
   author={Khukhro, Ana},
   title={Box spaces of the free group that neither contain expanders nor
   embed into a Hilbert space},
   journal={Adv. Math.},
   volume={336},
   date={2018},
   pages={70--96},
   issn={0001-8708},
   doi={10.1016/j.aim.2018.07.024},
}

\bib{Gromov}{article}{
   author={Gromov, M.},
   title={Random walk in random groups},
   journal={Geom. Funct. Anal.},
   volume={13},
   date={2003},
   number={1},
   pages={73--146},
   issn={1016-443X},
   doi={10.1007/s000390300002},
}
	
\bib{HLS}{article}{
   author={Higson, N.},
   author={Lafforgue, V.},
   author={Skandalis, G.},
   title={Counterexamples to the Baum-Connes conjecture},
   journal={Geom. Funct. Anal.},
   volume={12},
   date={2002},
   number={2},
   pages={330--354},
   issn={1016-443X},
   doi={10.1007/s00039-002-8249-5},
}

\bib{HigsonRoe}{article}{
   author={Higson, Nigel},
   author={Roe, John},
   title={Amenable group actions and the Novikov conjecture},
   journal={J. Reine Angew. Math.},
   volume={519},
   date={2000},
   pages={143--153},
   issn={0075-4102},
   doi={10.1515/crll.2000.009},
}

\bib{Lueck}{article}{
   author={L\"{u}ck, W.},
   title={Approximating $L^2$-invariants by their finite-dimensional
   analogues},
   journal={Geom. Funct. Anal.},
   volume={4},
   date={1994},
   number={4},
   pages={455--481},
   issn={1016-443X},
   doi={10.1007/BF01896404},
}

\bib{Margulis}{article}{
   author={Margulis, G. A.},
   title={Explicit constructions of expanders},
   language={Russian},
   journal={Problemy Pereda\v{c}i Informacii},
   volume={9},
   date={1973},
   number={4},
   pages={71--80},
   issn={0555-2923},
}

\bib{Nowak--Sawicki}{article}{
   author={Nowak, Piotr W.},
   author={Sawicki, Damian},
   title={Warped cones and spectral gaps},
   journal={Proc. Amer. Math. Soc.},
   volume={145},
   date={2017},
   number={2},
   pages={817--823},
   issn={0002-9939},
   doi={10.1090/proc/13258},
}

\renewcommand{\eprint}{\neweprint}

\bib{Osa}{article}{
    author = {Osajda, D.},
    title = {Small cancellation labellings of some infinite graphs and applications},
    eprint = {1406.5015},
    year = {2014},
}

\renewcommand{\eprint}{\oldeprint}

\bib{Roe-cones}{article}{
   author={Roe, John},
   title={Warped cones and property A},
   journal={Geom. Topol.},
   volume={9},
   date={2005},
   pages={163--178},
   issn={1465-3060},
   doi={10.2140/\allowbreak{}gt.2005.9.163},
}

\bib{Sawicki-completions}{article}{
   author={Sawicki, Damian},
   title={Warped cones over profinite completions},
   journal={J. Topol. Anal.},
   volume={10},
   date={2018},
   number={3},
   pages={563--584},
   issn={1793-5253},
   doi={10.1142/S179352531850019X},
}

\bib{Sawicki-superexp}{article}{
    title={Super-expanders and warped cones},
    journal = {Ann. Inst. Fourier (Grenoble)},
    note = {To appear. Accepted version available at {\tt \href{https://arxiv.org/abs/1704.03865}{arXiv:1704.03865}}.},
    author={Sawicki, D.}
}

\bib{STY}{article}{
   author={Skandalis, G.},
   author={Tu, J. L.},
   author={Yu, G.},
   title={The coarse Baum-Connes conjecture and groupoids},
   journal={Topology},
   volume={41},
   date={2002},
   number={4},
   pages={807--834},
   issn={0040-9383},
   doi={10.1016/S0040-9383(01)00004-0},
}

\bib{Vigolo}{article}{
   author={Vigolo, Federico},
   title={Measure expanding actions, expanders and warped cones},
   journal={Trans. Amer. Math. Soc.},
   volume={371},
   date={2019},
   number={3},
   pages={1951--1979},
   issn={0002-9947},
   doi={10.1090/tran/7368},
}

\bib{randomGraphs}{article}{
   author={Willett, Rufus},
   title={Random graphs, weak coarse embeddings, and higher index theory},
   journal={J. Topol. Anal.},
   volume={7},
   date={2015},
   number={3},
   pages={361--388},
   issn={1793-5253},
   doi={10.1142/S1793525315500156},
}

\bib{expgirth1}{article}{
   author={Willett, Rufus},
   author={Yu, Guoliang},
   title={Higher index theory for certain expanders and Gromov monster
   groups, I},
   journal={Adv. Math.},
   volume={229},
   date={2012},
   number={3},
   pages={1380--1416},
   issn={0001-8708},
   doi={10.1016/j.aim.2011.10.024},
}

\bib{Yu}{article}{
    title = {The coarse Baum--Connes conjecture for spaces which admit a uniform embedding into Hilbert space},
    author = {Yu, G.},
    journal = {Invent. math.},
    volume = {139},
    pages = {201--240},
    year = {2000},
    doi = {10.1007/s002229900032},
}

\end{biblist}
\end{bibdiv}

\end{document}